\newtheorem{theorem}{Theorem}[section]
\newtheorem{lemma}{Lemma}[section]
\newtheorem{proposition}{Proposition}[section]
\theoremstyle{definition}
\newtheorem{definition}{Definition}[section]
\theoremstyle{remark}
\numberwithin{equation}{section}
\def\f{\frac}
\def\hf1{^\f{1}{1-\xi^2}}
\def\be{\begin{equation}}
\def\en{\end{equation}}
\def\bs{\begin{split}}
\def\es{\end{split}}
\def\ba{\begin{align}}
\def\ea{\end{align}}
\author[Z. Qiu]{Zhaoyang Qiu}
\address{School of Mathematics and Statistics, Huazhong University of Science and Technology, Wuhan, 430074, China.}
\email{zhqmath@163.com}
\author[Y. Wang]{Yixuan Wang}
\address{Department of Mathematics, University of Pittsburgh, Pittsburgh, 15260, USA.}
\email{YIW119@pitt.edu}
\title[Martingale solution for stochastic active liquid crystal system]
{Martingale solution for stochastic active liquid crystal system}
\keywords{Stochastic active liquid crystal system, global martingale solution, stochastic compactness, four-level approximation}
\subjclass[2000]{35Q35, 76D05, 76A15}
\date{\today}
\begin{document}
\begin{abstract}
 The global weak martingale solution is built through a four-level approximation scheme to stochastic compressible active liquid crystal system driven by multiplicative noise in a smooth bounded domain in $\mathbb{R}^{3}$ with large initial data. The coupled structure makes the analysis challenging, and more delicate arguments are required in stochastic case compared to the deterministic one \cite{11}.
\end{abstract}

\maketitle
\section{Introduction}

 {The PDEs perturbed randomly are considered} as a primary tool in the modeling of uncertainty, especially while describing fundamental phenomenon in physics, climate dynamics, communication systems and gene regulation systems. Hence, the study of the well-posedness and dynamical behaviour of PDEs subjected to the noise which is largely applied to the theoretical and practical areas has drawn a lot of attention. In this paper, we consider the global weak martingale solution to the following stochastic compressible active liquid crystal system perturbed by a multiplicative noise in a smooth bounded domain $\mathcal{D}$, which consists of the compressible Navier-Stokes equation coupled with Q-tensor equation as well as the concentration equation of active particles,
\begin{eqnarray}\label{Equ1.1}
\left\{\begin{array}{ll}
\partial_{t}c+(u\cdot \nabla)c=\triangle c,\\
\partial_{t}\rho+{\rm div}(\rho u)=0,\\
\partial_{t}(\rho u)+{\rm div}(\rho u\otimes u)+\nabla p
 =\mu_1\Delta u+(\mu_1+\mu_2)\nabla ({\rm div} u)+\sigma^*\nabla\cdot(c^2Q)\\ \qquad\qquad+\nabla\cdot({\rm F}(Q){\rm I}_3-\nabla Q\odot \nabla Q)+\nabla\cdot(Q\triangle Q-\triangle Q Q) +\rho f(\rho,\rho u, c, Q)\frac{d\mathcal{W}}{dt}, \\
\partial_{t}Q+(u\cdot \nabla)Q+Q\Psi-\Psi Q=\Gamma {\rm H}(Q,c),\\
\end{array}\right.
\end{eqnarray}
where $c, \rho, u $ denote the concentration of active particles, the density, and the flow velocity, $p(\rho)=\rho^\gamma$ stands for the pressure with the adiabatic exponent $\gamma>\frac{3}{2}$, the nematic tensor order parameter $Q$ is a traceless and $3\times 3$ symmetric matrix, ${\rm I}_3$ is the $3\times 3$ identity matrix$, \mu_1, \mu_2$ are the viscosity coefficients satisfying the physical assumptions $\mu_1\geq 0$ and $2\mu_1+3\mu_2\geq 0$,  $\Gamma^{-1}>0$ is the rotational viscosity, $\sigma^*\in \mathbb{R}$ is the stress generated by the active particles along the director field. $\Psi=\frac{1}{2}(\nabla u-\nabla u^\bot)$ is the skew-symmetric part of the rate of strain tensor. $\mathcal{W}$ is a cylindrical Wiener process which will be introduced later. Furthermore,
\begin{eqnarray*}
{\rm F}(Q)=\frac{1}{2}|\nabla Q|^2+\frac{1}{2}{\rm tr}(Q^2)+\frac{c_*}{4}{\rm tr}^2(Q^2),
\end{eqnarray*}
and
\begin{eqnarray*}
{\rm H}(Q,c)=\triangle Q-\frac{c-c_*}{2}Q+b\left(Q^2-\frac{\rm tr(Q^2)}{3}{\rm I}_3\right)-c_*Q{\rm tr}(Q^2),
\end{eqnarray*}
where the constant $c_*$ is the critical concentration for the isotropic-nematic transition and $b$ is material-dependent constant. And the term $\nabla Q\odot \nabla Q$ stands for a $3\times 3$ matrix, its $(i,j)$-th entry is defined $(\nabla Q\odot \nabla Q)_{ij}=\sum_{k,l=1}^3\partial_iQ_{kl}\partial_jQ_{kl}$.

The system is supplied with the following initial data,
\begin{eqnarray}
\rho(0,x)=\rho_{0}(x), ~\rho u(0,x)=m_0(x),~ c(0,x)=c_{0}(x),~ Q(0,x)=Q_0(x),\label{1.2}
\end{eqnarray}
and  {the boundary conditions},
\begin{eqnarray}
\frac{\partial c}{\partial n}\bigg|_{\partial \mathcal{D}}=\frac{\partial Q}{\partial n}\bigg|_{\partial \mathcal{D}}=0, ~u|_{\partial \mathcal{D}}=0, \label{1.3}
\end{eqnarray}
here we omit the random element $\omega$.

Starting from the 1880’s, a new material that shares both the property of conventional liquids and those of solid crystals was found and named as liquid crystals. The nematic liquid crystals are one of the major types of liquid crystals, which is a representative model of the complex liquids and the anisotropic liquids crystal with different directions of average molecular alignment. Since the liquid crystal is of wide applicable, and its model is of vital importance in physics, a large amount of research concerning this has appeared starting from the early 1950's, which brought us tremendous results, see \cite{ding,ding2,lin,hu,12,yu} for compressible case and \cite{13, 15} for incompressible case. The active system is quite broadly applicable in nature, describing the collective dynamics of microscopic particles, especially in biophysical system such as swarm bacteria, vibrated granular rods, see \cite{Darnton,Sanchez}. In this paper, we would mainly focus on the active nematic liquid system, a model containing the active term to the hydrodynamic theories for the nematic liquid crystal, for further detail see \cite{11}. The idea of the mathematical description has arisen in recent years. For example, \cite{rama} includes several forms of the model according to various physical phenomenon. There are several works concerning the mathematical aspect of the system, \cite{14,11, lian} proved the existence of global weak solution for compressible and incompressible cases respectively.

Observe that the system would degenerate to the compressible Navier-Stokes equation, if the concentration $c$ and the order parameter $Q$ are absent. The historical development of the research in weak solution for compressible Navier-Stokes equation is as follows. To begin with, \cite{nish1,nish2,nish3} established the global existence with restriction on the initial data. After that, by introducing the re-normalized solution to surmount the difficulty of large oscillations, \cite{Lions} gave the global existence of weak solution for adiabatic exponent $\gamma>\frac{9}{5}$ with large initial data and the appearance of vacuum. Then, \cite{Feireisl} extended the result to adiabatic exponent $\gamma>\frac{3}{2}$, which by now is the result that allows the maximum range of $\gamma$. Furthermore, in \cite{Maslow}, authors developed the deterministic result to the stochastic case, obtained the existence of global weak pathwise solution to the equation forced by additive noise, where the special form of noise allows us to transform the stochastic system into the random equation, enabling the deterministic result to be exploited. As for the existence result of the equation driven by multiplicative noise, there are also some pioneering works, in \cite {DWang} for global weak martingale solution with finite-dimensional Brownian motion, in \cite{16,Hofmanova} for global weak martingale solution with cylindrical Wiener process, in \cite{18} for stationary solution, in \cite{Breit} for local strong pathwise solution, in \cite{17} for weak martingale solution to non-isentropic, compressible Navier-Stokes equation,  {in \cite{Breit1} for weak martingale solution} to non-isentropic, compressible Navier-Stokes-Fourier equation where energy balance equation is also forced by a random heat source.

 Note that, the main difference between the deterministic and stochastic case is that there is no compactness in random element $w$ since sample space has no topology structure. Generally speaking, it might not be the case that the embedding $L^{2}(\Omega;X)\hookrightarrow L^{2}(\Omega;Y)$ is compact, even if $X\hookrightarrow\hookrightarrow Y$. As a result, the usual compactness criteria, such as the Aubin or Arzel\`{a}-Ascoli type theorems, can not be applied directly. A common method to overcome this difficulty is to invoke the Skorokhod theorem to obtain that there exists a sequence of new random variables on a new probability space, converges almost surely, and its distribution is same as the original one, consequently, the new random variables also satisfy the system on the new probability space.

 We are devoted to establishing the existence of global weak martingale solution to system (\ref{Equ1.1})-(\ref{1.3}). Our proof mainly relies on the four level approximation developed by \cite{Feireisl} and \cite{Hofmanova} which also consists of the Galerkin approximation, the artificial viscosity and the artificial pressure. Each level approximation contains the argument of compactness and identify the limit. Here, we point out that the boundedness of concentration $c$ acquired via the maximum principle and the special construction of $Q$-tensor(symmetric and traceless) play a key role in obtaining the a priori estimates (cancel certain high-order nonlinear term) and establishing the weak continuity of the effective viscous flow. Without this remarkable property of $Q$-tensor, we are not able to handle the higher order nonlinear term $\nabla\cdot(Q\triangle Q-\triangle Q Q)$. In addition, the coupled constitution of four equations makes the analysis much more complicated and more delicate arguments including identifying the stochastic integral and showing the tightness of probability measures set are necessary.  We have reserved the further details of the idea of proof in Section 2.

The rest of the paper is organized as follows. In Section 2, we recall some deterministic and stochastic preliminaries associated with system \eqref{Equ1.1} and then state our result, followed by the idea of proofs. In Section 3, we construct the global martingale solution to a modified system by taking limit on the Galerkin approximate solution. Section 4 gives the existence of global martingale solution by passing the limit as the artificial viscosity goes to zero. We build the main result by passing the limit as the artificial pressure goes to zero in Section 5. Last, we include an Appendix stating the results used frequently in this paper.

\section{Preliminaries and main result}\label{sec2}
In this section, we begin by reviewing some deterministic and stochastic preliminaries associated with system (1.1), followed by main result.

Define the inner product between two $3\times 3$ matrices $A$ and $B$
\begin{eqnarray*}
(A, B)=\int_{\mathcal{D}}A:B dx=\int_{\mathcal{D}}{\rm tr}(AB)dx,
\end{eqnarray*}
and $S_0^3\subset \mathbb{M}^{3\times 3}$ the space of $Q$-tensor
\begin{eqnarray*}
S_0^3=\left\{Q\in \mathbb{M}^{3\times 3}:~Q_{ij}=Q_{ji},~ {\rm tr}(Q)=0, ~i,j=1,2,3\right\},
\end{eqnarray*}
and the norm of a matrix using the Frobenius norm
\begin{eqnarray*}
|Q|^2={\rm tr}(Q^2)=Q_{ij}Q_{ij}.
\end{eqnarray*}
The Sobolev space of $Q$-tensor is defined by
\begin{eqnarray*}
H^{1}(\mathcal{D};S_0^3)=\left\{Q: \mathcal{D}\rightarrow S_0^3, ~{\rm and} \int_{\mathcal{D}}|\nabla Q|^2+|Q|^2dx<\infty\right\}.
\end{eqnarray*}
Set $|\nabla Q|^2=\partial_{k}Q_{ij}\partial_{k}Q_{ij}$ and $|\triangle Q|^2=\triangle Q_{ij}\triangle Q_{ij}$.

The space $C_w([0,T]; X)$ consists of all weakly continuous functions $u:[0,T]\rightarrow X$ and $u_n\rightarrow u$ in $C_w([0,T]; X)$ if and only if $\langle u_n(t),\phi\rangle\rightarrow \langle u(t), \phi\rangle$ uniformly in $t$, $\phi\in X^*$, where $X^*$ is the dual space of $X$.

Let $(\Omega,\mathcal{F},\{\mathcal{F}_{t}\}_{t\geq0},\mathbb{P}, \mathcal{W})$ be a fixed stochastic basis and $(\Omega,\mathcal{F},\mathbb{P})$ be a complete probability space. $\mathcal{W}$ is a cylindrical Wiener process defined on the Hilbert space $\mathcal{H}$, which is adapted to the complete, right continuous filtration $\{\mathcal{F}_{t}\}_{t\geq 0}$. Namely, $\mathcal{W}=\sum_{k\geq 1}e_k\beta_{k}$ with $\{e_k\}_{k\geq 1}$ being the complete orthonormal basis of $\mathcal{H}$ and $\{\beta_{k}\}_{k\geq 1}$ being a sequence of independent standard one-dimensional Brownian motions. In addition, $L_{2}(\mathcal{H},X)$ denotes the collection of Hilbert-Schmidt operators, the set of all linear operators $G$ from $\mathcal{H}$ to $X$, with the norm $\|G\|_{L_{2}(\mathcal{H},X)}^2=\sum_{k\geq 1}\|Ge_k\|_{X}^2$.

Consider an auxiliary space $\mathcal{H}_0\supset \mathcal{H}$, define by
\begin{eqnarray*}
\mathcal{H}_0=\left\{h=\sum_{k\geq 1}\alpha_k e_k: \sum_{k\geq 1}\alpha_k^2k^{-2}<\infty\right\},
\end{eqnarray*}
 with the norm $\|h\|_{\mathcal{H}_0}^2=\sum_{k\geq 1}\alpha_k^2k^{-2}$. Observe that the mapping $\Phi:\mathcal{H}\rightarrow\mathcal{H}_0$ is Hilbert-Schmidt. We also have that $\mathcal{W}\in C([0,\infty),\mathcal{H}_0)$ almost surely, see \cite{Prato}.

For an $X$-valued predictable process $f\in L^{2}(\Omega;L^{2}_{loc}([0,\infty),L_{2}(\mathcal{H},X)))$  by taking $f_{k}=fe_{k}$, the Burkholder-Davis-Gundy inequality holds
\begin{eqnarray*}
\mathbb{E}\left[\sup_{t\in [0,T]}\left\|\int_{0}^{t}fd\mathcal{W}\right\|_{X}^{p}\right]\leq c_{p}\mathbb{E}\left(\int_{0}^{T}\|f\|_{L_{2}(\mathcal{H},X)}^{2}dt\right)^{\frac{p}{2}}
=c_{p}\mathbb{E}\left(\int_{0}^{T}\sum_{k\geq 1}\|f_k\|_{X}^{2}dt\right)^{\frac{p}{2}},
\end{eqnarray*}
for any $1\leq p<\infty$.

Next, we define the global weak martingale solution of system (\ref{Equ1.1})-(\ref{1.3}).
\begin{definition}\label{def2.1} Let $\mathcal{P}$ be a Borel probability measure on $L^{\gamma}(\mathcal{D})\times L^{\frac{2\gamma}{\gamma+1}}(\mathcal{D})\times (H^{1}(\mathcal{D}))^2$ with $\gamma>\frac{3}{2}$.  $\{(\Omega, \mathcal{F}, \{\mathcal{F}_{t}\}_{t\geq 0}, \mathbb{P}), \rho,u,c,Q,\mathcal{W}\}$ is a global weak martingale solution to system (\ref{Equ1.1})-(\ref{1.3}) if the following conditions hold:\\
(i) $(\Omega, \mathcal{F}, \{\mathcal{F}_{t}\}_{t\geq 0}, \mathbb{P})$ is a stochastic basis and $\mathcal{W}$ is an $\mathcal{F}_{t}$ cylindrical Wiener process,\\
(ii)  {the processes $\rho\in C_w([0,T]; L^\gamma(\mathcal{D})), \rho u\in C_w([0,T]; L^\frac{2\gamma}{\gamma+1}(\mathcal{D})), c\in C_w([0,T]; L^2(\mathcal{D})), Q\in C_w([0,T]; H^1(\mathcal{D}))$ are} $\mathcal{F}_t$ progressively measurable, satisfying
\begin{eqnarray*}
&&\rho\in L^{p}(\Omega; L^{\infty}(0,T; L^{\gamma}(\mathcal{D}))),\\&&\rho u\in L^{p}(\Omega; L^{\infty}(0,T; L^{\frac{2\gamma}{\gamma+1}}(\mathcal{D}))), \sqrt{\rho}u\in L^{p}(\Omega; L^{\infty}(0,T; L^{2}(\mathcal{D}))), \\ &&c\in L^{p}(\Omega; L^{\infty}(0,T; L^{2}(\mathcal{D}))\cap L^{2}(0,T; H^{1}(\mathcal{D}))),\\ &&
Q\in L^{p}(\Omega; L^{\infty}(0,T; H^{1}(\mathcal{D}))\cap L^{2}(0,T; H^{2}(\mathcal{D}))),
\end{eqnarray*}
for any $1\leq p<\infty, 0<T<\infty$, \\
(iii)  {the velocity $u$ is a random distribution} adapted to $\mathcal{F}_{t}$, for the definition see \cite[Definition 2.2.13]{Breit2}, satisfying
$$u\in L^{p}(\Omega; L^{2}(0,T; H^{1}(\mathcal{D}))),$$
for any $1\leq p<\infty, 0<T<\infty$,\\
(iv) $\mathcal{P}=\mathbb{P}\circ(\rho_{0}, m_{0},c_{0}, Q_0)^{-1}$,\\
(v)  {for $\ell\in C^{\infty}(\mathcal{D}), \phi\in C^{\infty}(\mathcal{D}), \varphi\in C^{\infty}(\mathcal{D}), \psi\in C^{\infty}(\mathcal{D})$ and $t\in [0,T]$}, $\mathbb{P}$ a.s.
\begin{eqnarray*}
&&\int_{\mathcal{D}}c(t)\ell dx=\int_{\mathcal{D}}c(0)\ell dx-\int_{0}^{t}\int_{\mathcal{D}} (u\cdot\nabla)c \cdot\ell dxds-\int_{0}^{t}\int_{\mathcal{D}} \nabla c \cdot\nabla\ell dxds,\\
&&\int_{\mathcal{D}}\rho(t)\psi dx=\int_{\mathcal{D}}\rho(0)\psi dx+\int_{0}^{t}\int_{\mathcal{D}}\rho u\cdot \nabla\psi dxds,\\
&&\int_{\mathcal{D}}\rho u(t)\phi dx=\int_{\mathcal{D}}m(0)\phi dx+\int_{0}^{t}\int_{\mathcal{D}}\rho u\otimes u\cdot \nabla\phi dxds-\mu_1\int_{0}^{t}\int_{\mathcal{D}}\nabla u\cdot \nabla\phi dxds\\ &&~\qquad\qquad\qquad-(\mu_1+\mu_2)\int_{0}^{t}\int_{\mathcal{D}}{\rm div}u\cdot {\rm div}\phi dxds+\int_{0}^{t}\int_{\mathcal{D}}\rho^{\gamma}\cdot {\rm div}\phi dxds \\ &&~\qquad\qquad\qquad-\int_{0}^{t}\int_{\mathcal{D}}(({\rm F}(Q){\rm I}_3-\nabla Q\odot \nabla Q)+(Q\triangle Q-\triangle Q Q)+\sigma^*c^2 Q)\cdot\nabla \phi dxds \\&&~\qquad\qquad\qquad+\int_{0}^{t}\int_{\mathcal{D}}\rho f(\rho,\rho u,c, Q)\phi dxd\mathcal{W},\\
&&\int_{\mathcal{D}}Q(t)\varphi dx=\int_{\mathcal{D}}Q(0)\varphi dx-\int_{0}^{t}\int_{\mathcal{D}}\varphi(u\cdot \nabla)Q+\varphi Q\Psi-\varphi\Psi Q dxds\\&&~\qquad\qquad\qquad+\int_{0}^{t}\int_{\mathcal{D}}\Gamma \varphi{\rm H}(Q,c)dxds,
\end{eqnarray*}
(vi)  for all $\psi\in C^{\infty}(\mathcal{D})$ and $t\in [0,T]$,  $\rho$ satisfies the following re-normalized equation
\begin{eqnarray*}
&&\int_{\mathcal{D}}b(\rho)\psi dx=\int_{\mathcal{D}}b(\rho(0))\psi dx+\int_{0}^{t}\int_{\mathcal{D}}b(\rho)u\cdot \nabla\psi dxds\\&&\qquad\qquad\qquad+ \int_{0}^{t}\int_{\mathcal{D}}(b'(\rho)\rho-b(\rho)){\rm div}u\cdot \psi dxds,
\end{eqnarray*}
where the function $b\in C^{1}(\mathbb{R})$ satisfies $b'(z)=0$ for all $z\in \mathbb{R}$ large enough.
\end{definition}

Throughout the paper, we assume that the operator $f$ satisfies the following conditions: there exists a constant $C$ such that
\begin{eqnarray}\label{2.1*}
\sum_{k\geq 1}|f(\rho, \rho u, c, Q)e_{k}|^2\leq C \left(|\rho|^{\gamma-1}+|c, \nabla Q|^\frac{2(\gamma-1)}{\gamma}+|u|^2\right), \label{2.1}
\end{eqnarray}
and
\begin{eqnarray}\label{2.2*}
&&\sum_{k\geq 1}|(\rho_1f(\rho_1,\rho_1 u_1, c_1, Q_1 )-\rho_2f(\rho_2,\rho_2 u_2, c_2, Q_2 ))e_{k}|^2\nonumber\\
&&\leq C|\rho_1-\rho_2, \rho_1u_1-\rho_2u_2,c_1-c_2,Q_1-Q_2|^{\frac{\gamma+1}{2\gamma}}, \label{2.2}
\end{eqnarray}
where $|u,v|:=|u|+|v|$ and $|\cdot|$ stands for the absolute value. Condition (\ref{2.1}) will be used for obtaining the a priori estimate, while Condition (\ref{2.2}) will be applied to identify the limit.

In addition, we assume that initial data satisfy the following conditions for all $1\leq p< \infty$
\begin{eqnarray}
&&\rho_0\in L^p(\Omega;L^\gamma(\mathcal{D})), ~\rho_0\geq 0 ~{\rm and }~ m_0=0~ {\rm if}~ \rho_0=0,\label{2.3}\\
&&\frac{|m_0|^2}{\rho_0}\in  L^p(\Omega;L^1(\mathcal{D})), \label{2.3*}\\
&&c_0\in  L^p(\Omega;H^1(\mathcal{D}))~~ {\rm and}~ 0<\underline{c}\leq c_0\leq \bar{c}<\infty,\label{2.4}\\
 &&Q_0\in L^p(\Omega; H^{1}(\mathcal{D}; S_0^3)), \label{2.5}
\end{eqnarray}
where the lower and upper bounds $\underline{c}, \overline{c}$ are two fixed constants.

Now, we state the main result.
\begin{theorem}\label{thm2.1} Let $\gamma>\frac{3}{2}$. Suppose that the initial data $(\rho_0, m_0, c_0, Q_0)$ satisfy the assumptions (\ref{2.3})-(\ref{2.5}), and  the operator $f$ satisfies the conditions (\ref{2.1}), (\ref{2.2}). Then, there exists a global martingale weak solution to system (\ref{Equ1.1})-(\ref{1.3}) in the sense of Definition \ref{def2.1}.
\end{theorem}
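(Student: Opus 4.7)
The proof proceeds by a four-level approximation scheme inspired by the deterministic construction of Feireisl-Lions and its stochastic adaptation in \cite{Hofmanova}, augmented with arguments needed to accommodate the Q-tensor and concentration couplings. First I would regularize the system by adding artificial viscosity $\varepsilon\Delta\rho$ to the continuity equation, an artificial pressure term $\delta\rho^\beta$ (with $\beta$ large enough so that $\rho\in L^\beta$ yields strong compactness of the pressure) to the momentum equation, and project the resulting velocity equation onto a finite-dimensional space $X_N$ spanned by eigenfunctions of a suitable elliptic operator; the concentration and $Q$-tensor equations would be coupled to this truncated $u_N$. Localizing the nonlinear coefficients by a cut-off in $N$ (the fourth level) reduces the velocity system to a finite-dimensional SDE with locally Lipschitz coefficients, for which standard SDE theory yields a local strong pathwise solution; a stopping-time argument together with uniform-in-time energy estimates then extends it globally on $[0,T]$.

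The basic a priori estimate would come from applying It\^o's formula to the total energy
\[
\mathcal{E}(\rho,u,c,Q)=\int_{\mathcal{D}}\Bigl(\tfrac{1}{2}\rho|u|^2+\tfrac{\rho^\gamma}{\gamma-1}+\tfrac{\delta}{\beta-1}\rho^\beta+{\rm F}(Q)+\tfrac{1}{2}c^2\Bigr)dx.
\]
The symmetric, traceless structure of $Q$ is essential here: it produces the cancellation between the stress $\nabla\cdot(Q\Delta Q-\Delta Q\,Q)$ tested against $u$ and the skew terms $Q\Psi-\Psi Q$ tested against $\Delta Q$ in the $Q$-equation, so that the higher-order nonlinearities do not obstruct the estimate. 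The It\^o correction and the stochastic integral are controlled via (\ref{2.1*}) and the Burkholder-Davis-Gundy inequality, yielding $L^p(\Omega)$ bounds for every $p<\infty$ on $\rho\in L^\infty(L^\gamma)$, $\sqrt{\rho}u\in L^\infty(L^2)$, $u\in L^2(H^1)$, $Q\in L^\infty(H^1)\cap L^2(H^2)$ and $c\in L^\infty(L^2)\cap L^2(H^1)$. A maximum-principle argument on the $c$-equation gives the pointwise bound $\underline{c}\le c\le \overline{c}$, which keeps the source $-\tfrac{c-c_*}{2}Q$ in ${\rm H}(Q,c)$ and the active stress $\sigma^*c^2Q$ under control uniformly in all approximation parameters.

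Because compactness in $\omega$ is not available, each passage to the limit would be carried out by establishing tightness of the joint laws of $(\rho,u,c,Q,\mathcal{W})$ on path spaces $C_w([0,T];\cdot)$, $L^2(0,T;\cdot)$ and $C([0,T];\mathcal{H}_0)$ using fractional-in-time estimates on the stochastic integral and Aubin-Lions for the parabolic variables, then invoking Jakubowski's variant of the Skorokhod theorem to produce almost-surely convergent versions on a new probability space. Identification of the limit stochastic integral follows the martingale characterization of Wiener processes together with the Lipschitz bound (\ref{2.2*}). At the Galerkin and artificial-viscosity stages, strong convergence of $\rho$ comes directly from the parabolic regularization of the continuity equation, so the main work at those levels is to identify the $Q$- and $c$-dependent stresses and the noise in the limit.

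The genuinely hard step is the vanishing artificial-pressure limit $\delta\to 0$: here one must prove strong $L^1$ convergence of $\rho$ by establishing, $\mathbb{P}$-a.s.\ on the new probability space, the stochastic effective viscous flux identity
\[
\overline{p(\rho)\,{\rm div}\,u}-\overline{p(\rho)}\,\,{\rm div}\,u=(2\mu_1+\mu_2)\bigl(\overline{\rho\,{\rm div}\,u}-\overline{\rho}\,{\rm div}\,u\bigr),
\]
together with the DiPerna-Lions renormalized continuity equation and Feireisl's oscillation-defect-measure argument. The new difficulty compared with the pure Navier-Stokes case is that the extra stress $-\nabla Q\odot\nabla Q+{\rm F}(Q){\rm I}_3+Q\Delta Q-\Delta Q\,Q+\sigma^*c^2Q$ has to pass to the limit inside the commutator with the inverse divergence operator used to derive the flux identity; I would handle this by first upgrading the weak convergence of $Q$ in $L^2(0,T;H^2)$ and of $c$ in $L^2(0,T;H^1)$ to strong convergence in $L^2(0,T;H^1)$ and $L^2(0,T;L^2)$ respectively using their parabolic dissipation and the compactness of the trilinear transport terms, so that all $Q$- and $c$-dependent stresses converge strongly and do not contribute to the defect measure. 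Once the flux identity is combined with the renormalized equation, strong convergence of $\rho$ follows, and the successive limits $N\to\infty$, $\varepsilon\to 0$, $\delta\to 0$ deliver a quadruple satisfying Definition \ref{def2.1}.
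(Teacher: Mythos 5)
Your outline follows the paper's four-level scheme closely — Galerkin projection with an extra cut-off truncation, artificial viscosity in the continuity equation, artificial pressure $\delta\rho^\beta$, energy estimate via It\^o combined with the maximum principle for $c$ and the algebraic cancellations from the symmetric, traceless $Q$-tensor, Skorokhod-Jakubowski for tightness, and effective viscous flux with the renormalized continuity equation — but there are two genuine gaps in the way you plan to establish strong convergence of the density.

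First, you assert that at the artificial-viscosity stage strong convergence of $\rho$ ``comes directly from the parabolic regularization of the continuity equation,'' so that only the $\delta\to 0$ limit requires the effective viscous flux. This is incorrect: the parabolic estimate supplies only $\sqrt{\epsilon}\,\rho_{\epsilon}\in L^p(\Omega;L^2(0,T;H^1))$, which degenerates as $\epsilon\to 0$ and does not yield tightness of $\rho_\epsilon$ in $L^2(0,T;L^2)$. Consequently the pressure term $\rho_\epsilon^\gamma+\delta\rho_\epsilon^\beta$ and the multiplicative noise cannot be identified directly, and one has to carry out the full machinery — improving density integrability to $L^{\gamma+1}\cap L^{\beta+1}$ via Bogovskii, the effective-viscous-flux identity, and the Minty trick — already at the $\epsilon\to 0$ level, exactly as the paper does in Section 4 (Lemma 4.1, Steps 1--2). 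In fact the paper's detailed flux argument lives at this level, with the $\delta\to 0$ version described as ``standard'' afterwards.

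Second, your plan to handle the $Q$-dependent stresses inside the flux commutator by upgrading $Q$ to strong convergence in $L^2(0,T;H^1)$ ``so that all $Q$- and $c$-dependent stresses converge strongly'' does not work for the higher-order piece $Q\triangle Q - \triangle Q\,Q$. Here $\triangle Q$ converges only weakly in $L^2(0,T;L^2)$, so the product $Q\triangle Q$ converges at best as a strong-weak product, not strongly; moreover it is paired with $\nabla\mathcal{A}[\rho_\epsilon]$, which itself converges only weakly, giving a weak-weak product that compactness alone cannot resolve. The paper instead uses a structural cancellation: because $Q$ and $\triangle Q$ are symmetric matrices, the commutator $Q\triangle Q - \triangle Q\,Q$ is skew-symmetric, while $\nabla\mathcal{A}[\rho]=\nabla^2\Delta^{-1}\rho$ is symmetric, so the pairing $(Q\triangle Q - \triangle Q\,Q):\nabla\mathcal{A}[\rho]$ vanishes identically (this is the content of $J_{13,a}^\epsilon=J_{11,a}=0$ in Step 1 of Section 4, and the paper singles it out as the reason the flux identity closes at all). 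Without this observation your proof of the flux identity would stall on that term; the strong convergence of $Q$ in $L^2(H^1)$ you invoke only controls the remaining, lower-order parts.
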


 {The proof of Theorem \ref{thm2.1} mainly relies on the four-level approximation.} The first level approximation actually contains two-step approximation. Because of the difficulty technically, we need to cut-off the approximate solution such that in a sense the bound is uniform in random element corresponding to the truncated parameter $K$ inspired by \cite{Hofmanova}. Then, for any fixed $n,K$, the existence and uniqueness of the Galerkin approximate solution in small time is established using the Banach fixed point argument. The uniform bound of $c$ obtained via maximum principle and the property of symmetric and traceless of Q-tensor can be used for cancelling the high order nonlinear term, allowing us to get the uniform a priori estimates, see Lemma \ref{lem3.1}. With the a priori estimates established, we can extend the existence time to global. Next, we let $K\rightarrow \infty$ to establish the Galerkin approximate solution for fixed $n$. Define a stopping time $\tau_{K}$, on interval $[0, \tau_K)$, we can define the approximate solution using the uniqueness of solution and the monotonicity of the sequence of stopping time $\tau_{K}$. Using the a priori estimates, it holds $\lim_{K\rightarrow\infty}\tau_{K}=T$, $\mathbb{P}$ a.s. This means no blow-up arises in finite time. Except for this level approximation, all other three level approximations contain the compactness argument. Owing to the complex structure of the system, we have to work with weak compactness, the classical Skorokhod theorem is replaced by the Skorokhod-Jakubowski theorem applying to quasi-Polish space.  To employ the theorem, it is necessary to show the tightness of the set of probability measures generated by the distribution of approximate solution, which can be achieved using the Aubin-Lions Lemma, the a priori estimates and the delicate analysis. After obtaining the compactness of the new processes on the new probability space, we identify all the nonlinear term by passing limit as $n\rightarrow\infty$.

In the second level approximation, the solution obtained in first level approximation will be used as approximate solution. Here the boundedness of $\sqrt{\epsilon}\rho_{\epsilon,\delta} \in L^{p}(\Omega; L^{2}(0,T;H^{1}(\mathcal{D})))$ is not helpful in getting the strong convergence of density, which makes it difficult to identify the nonlinear term with respect to $\rho$ (the pressure term and stochastic term). Following the ideas from \cite{Feireisl,Lions}, we shall show the strong convergence of density by improving the integrability of density, establishing the weak continuity of the effective viscous flow (here the symmetric of $Q$-tensor plays a crucial role, see Step 1 in Section 4) and using the Minty trick.

Following the same line as the second level approximation, improving the integrability of density, establishing the weak continuity of the effective viscous flow, introducing the re-normalized solution to control the large oscillations and the truncated technique are required to get the strong convergence of density. Here, the proof is standard, we only give the necessary tightness argument and  improve the integrability of density, for further details, we refer the reader to \cite{Hofmanova,16,DWang}.

\maketitle
\section{The existence of martingale solution for $n\rightarrow \infty$}

In this section, we are devoted to building the existence of global weak martingale solution to the following modified viscous system
\begin{equation}\label{Equ3.1}
    \begin{cases} \partial_{t}c+(u\cdot \nabla)c=\triangle c,\\
\partial_{t}\rho+{\rm div}(\rho u)=\epsilon \triangle \rho,\\
\partial_{t}(\rho u)+{\rm div}(\rho u\otimes u)+\nabla (\rho^\gamma+\delta{\rho}^\beta)+\epsilon\nabla\rho\cdot\nabla u
 =\mu_1\Delta u+(\mu_1+\mu_2)\nabla ({\rm div} u)\\ \qquad\qquad+\sigma^*\nabla\cdot(c^2Q)+\nabla\cdot({\rm F}(Q){\rm I}_3-\nabla Q\odot \nabla Q)+\nabla\cdot(Q\triangle Q-\triangle Q Q)\\ \qquad\qquad +\rho f(\rho,\rho u, c, Q)\frac{d\mathcal{W}}{dt},\\
\partial_{t}Q+(u\cdot \nabla)Q+Q\Psi-\Psi Q=\Gamma {\rm H}(Q,c),\\
    \end{cases}
    \end{equation}
where $\epsilon, \delta>0$ and $\beta>\{6, \gamma\}$, with the boundary conditions
\begin{eqnarray}\label{3.2}
\frac{\partial \rho}{\partial n}\bigg|_{\partial \mathcal{D}}=\frac{\partial c}{\partial n}\bigg|_{\partial \mathcal{D}}=\frac{\partial Q}{\partial n}\bigg|_{\partial \mathcal{D}}=0, ~u|_{\partial \mathcal{D}}=0,
\end{eqnarray}
and the modified initial data
\begin{eqnarray}
&&\rho(0)=\rho_{0,\delta}\in L^p(\Omega; C^{2+\alpha}(\mathcal{D})),\label{3.3}\\ && \rho u(0)=m_{0,\delta}\in L^p(\Omega; C^{2}(\mathcal{D})), \label{3.3*}\\~ &&Q(0)=Q_0\in L^p(\Omega; H^{1}(\mathcal{D}; S_0^3)), \label{3.4}\\
&&c(0)=c_0\in L^p(\Omega; H^1(\mathcal{D}))~ {\rm and}~ 0<\underline{c}\leq c_0\leq \bar{c}<\infty.\label{3.5}
\end{eqnarray}
Moreover, assume that the initial data $\rho_{0,\delta}$ satisfies the following conditions
\begin{eqnarray}
0<\delta\leq \rho_{0,\delta}\leq \delta^{-\frac{1}{\beta}}< \infty, ~~(\rho_{0,\delta})_m\leq M,~~\rho_{0,\delta}\rightarrow \rho_{0}~ {\rm in}~ L^{\gamma}(\mathcal{D})~ {\rm as} ~ \delta\rightarrow 0, \label{3.6}
\end{eqnarray}
where the $(\rho_{0,\delta})_m$ denotes the mean value of $\rho_{0,\delta}$ in domain $\mathcal{D}$, and
\begin{eqnarray*}
m_{0,\delta}=h_{\delta}\sqrt{\rho_{0,\delta}},
\end{eqnarray*}
$h_{\delta}$ is defined as follows. Let
\begin{eqnarray*}
\tilde{m}_{0,\delta}=\left\{\begin{array}{ll}
m_{0}\sqrt{\frac{\rho_{0,\delta}}{\rho_{0}}},~~ {\rm if} ~~\rho_{0}>0,\\
0, ~~{\rm if}~~ \rho_{0}=0.\\
\end{array}\right.
\end{eqnarray*}
According to the assumption (\ref{2.3*}), we have $\frac{|\tilde{m}_{0,\delta}|^{2}}{\rho_{0,\delta}}\in L^{p}(\Omega; L^{1}(\mathcal{D}))$ uniformly in $\delta$ for $1\leq p<\infty$. Therefore, we can find $C^{2}(\overline{\mathcal{D}})$-valued random variables $h_{\delta}$ such that
\begin{eqnarray*}
\left\|\frac{\tilde{m}_{0,\delta}}{\sqrt{\rho_{0,\delta}}}-h_{\delta}\right\|_{L^2(\mathcal{D})}\leq \delta.
\end{eqnarray*}
Then, we have
\begin{eqnarray*}
&&\frac{|m_{0,\delta}|^{2}}{\rho_{0,\delta}}\in L^{p}(\Omega; L^{1}(\mathcal{D}))~{\rm  uniformly ~in}~ \delta,\\
&&\frac{m_{0,\delta}}{\sqrt{\rho_{0,\delta}}}\rightarrow \frac{m_{0}}{\sqrt{\rho_{0}}}~ {\rm in}~L^{p}(\Omega; L^{2}(\mathcal{D}))~ {\rm for} ~1\leq p<\infty.
\end{eqnarray*}
Let $\mathcal{P}_{\delta}=\mathbb{P}\circ(\rho_{0,\delta}, m_{0,\delta}, c_{0}, Q_0)^{-1}$. According to the construction, we have $\mathcal{P}_{\delta}$ is Borel probability measure on $C^{2+\alpha}(\mathcal{D})\times C^{2}(\mathcal{D})\times (H^{1}(\mathcal{D}))^2$, satisfying
\begin{eqnarray*}
&&\mathcal{P}_{\delta}\big\{(\rho,\rho u, c_, Q)\in C^{2+\alpha}(\mathcal{D})\times C^{2}(\mathcal{D})\times (H^{1}(\mathcal{D}))^2;\\
&&\qquad\qquad {\rm and }~0< \delta\leq\rho\leq \delta^{-\frac{1}{\beta}}<\infty,~(\rho)_m\leq M,~ 0<\underline{c}\leq c_0\leq \bar{c}<\infty, ~Q\in S_0^3\big\}=1,
\end{eqnarray*}
and
\begin{eqnarray} \label{3.7}
\int_{C^{2+\alpha}(\mathcal{D})\times C^{2}(\mathcal{D})\times (H^{1}(\mathcal{D}))^2}\left\|\frac{|\rho u|^{2}}{2\rho}+\frac{1}{\gamma-1}\rho^{\gamma}+|\nabla c, \nabla Q|^{2}\right\|_{L^{1}(\mathcal{D})}^{p}d\mathcal{P}_{\delta}\nonumber\\ \rightarrow \int_{L^{\gamma}(\mathcal{D})\times L^{\frac{2\gamma}{\gamma+1}}(\mathcal{D})\times (H^{1}(\mathcal{D}))^2}\left\|\frac{|\rho u|^{2}}{2\rho}+\frac{1}{\gamma-1}\rho^{\gamma}
+|\nabla c, \nabla Q|^{2}\right\|_{L^{1}(\mathcal{D})}^{p}d\mathcal{P},
\end{eqnarray}
where $\mathcal{P}$ is a Borel probability measure on $L^\gamma(\mathcal{D})\times L^\frac{2\gamma}{\gamma+1}(\mathcal{D})\times (H^1(\mathcal{D}))^2$.

The proof will be divided into three subsections. For the first subsection, we establish the Galerkin approximate solution and the a priori estimates. Then, the compactness result is obtained in second subsection. In the third subsection, we get the existence of global weak martingale solution by taking the limit as $n\rightarrow\infty$.

\subsection{%Step 3.1.
The approximate solution and a priori estimates}

First of all,  {we build the approximate solution to system (\ref{Equ3.1})-(\ref{3.5}) for fixed $\varepsilon>0$ and $\delta>0$, we would need an extra approximation layer compared to the deterministic case following the ideas of \cite{Hofmanova}.} At the beginning, we introduce the following well-posedness results taken from \cite{11,Lunardi}.
\begin{lemma} \label{lem3.2}Suppose that the initial data $\rho_0$ satisfies (\ref{3.3}).  If $u\in C([0,T];C^2(\mathcal{D}))$ with $u|_{\partial \mathcal{D}}=0$, then there exists a mapping $\mathcal{S}=\mathcal{S}(u)$
\begin{eqnarray*}
\mathcal{S}: C([0,T];C^2(\overline{\mathcal{D}}))\rightarrow C([0,T];C^{2+\alpha}(\overline{\mathcal{D}})),
\end{eqnarray*}
with the following properties:

(1) $\rho=\mathcal{S}(u)$ is a unique classical solution of system $(\ref{Equ3.1})_2, (\ref{3.3}), (\ref{3.6})$ with the mapping $\mathcal{S}$ continuous on bounded subset of $C([0,T];C^{2}(\mathcal{D}))$.

 (2) It holds
\begin{eqnarray*}
\delta{\rm exp}\left(-\int_{0}^{t}\|{\rm div} u\|_{L^{\infty}}dr\right)\leq \rho \leq \delta^{-\frac{1}{\beta}}{\rm exp}\left(\int_{0}^{t}\|{\rm div} u\|_{L^{\infty}}dr\right),
\end{eqnarray*}
for all $t\in [0,T]$.
\end{lemma}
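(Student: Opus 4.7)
The equation $\partial_t\rho - \varepsilon\Delta\rho + u\cdot\nabla\rho + (\mathrm{div}\,u)\rho = 0$ is, for a fixed velocity $u\in C([0,T];C^2(\overline{\mathcal{D}}))$ with $u|_{\partial\mathcal{D}}=0$, a \emph{linear} parabolic equation for $\rho$ with coefficients that lie in $C([0,T];C^1(\overline{\mathcal{D}}))$. My plan is therefore to dispatch part (1) by quoting the Schauder theory for linear parabolic initial-boundary value problems with Neumann condition (as developed in Lunardi's monograph): under the compatibility assumption coming from $\partial_n u|_{\partial\mathcal{D}}=0$ (which follows from $u|_{\partial\mathcal{D}}=0$ in a $C^1$ sense), the problem admits a unique classical solution $\rho\in C([0,T];C^{2+\alpha}(\overline{\mathcal{D}}))$, together with a Schauder estimate of the form
\begin{equation*}
\|\rho\|_{C([0,T];C^{2+\alpha})} \le C\bigl(\|u\|_{C([0,T];C^2)}\bigr)\,\|\rho_0\|_{C^{2+\alpha}}.
\end{equation*}

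For the continuity of $\mathcal{S}$ on bounded subsets of $C([0,T];C^2(\overline{\mathcal{D}}))$, I would take two velocities $u_1,u_2$ in a fixed ball and set $\rho_i=\mathcal{S}(u_i)$. The difference $w=\rho_1-\rho_2$ solves a linear parabolic equation of the same type driven by the forcing term $(u_2-u_1)\cdot\nabla\rho_2 + \mathrm{div}(u_2-u_1)\,\rho_2$, with zero initial data and Neumann boundary condition. Applying the Schauder estimate once more, bounded in terms of the common radius, yields
\begin{equation*}
\|\rho_1-\rho_2\|_{C([0,T];C^{2+\alpha})} \lesssim \|u_1-u_2\|_{C([0,T];C^2)},
\end{equation*}
which is exactly the continuity statement.

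For part (2), my plan is to use the maximum (and minimum) principle in its comparison form. Set $M(t):=\delta^{-1/\beta}\exp\!\bigl(\int_0^t\|\mathrm{div}\,u\|_{L^\infty}\,dr\bigr)$; then $M$ solves the ODE $\dot M = \|\mathrm{div}\,u\|_{L^\infty}M$, so the spatially constant function $M(t)$ satisfies $\partial_t M - \varepsilon\Delta M + u\cdot\nabla M + (\mathrm{div}\,u)M \ge 0$, with $\partial_n M|_{\partial\mathcal{D}}=0$ and $M(0)=\delta^{-1/\beta}\ge\rho_0$. Hence $M$ is a super-solution with the correct boundary/initial data, and the parabolic maximum principle (applied to $\rho - M$, whose maximum over $\overline{\mathcal{D}}\times[0,T]$ cannot be attained on the parabolic boundary unless it is $\le 0$, and cannot be an interior positive maximum either because of the sign of the lower-order coefficient at such a point) gives $\rho\le M$. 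The symmetric argument with $m(t):=\delta\exp\!\bigl(-\int_0^t\|\mathrm{div}\,u\|_{L^\infty}\,dr\bigr)$ as a sub-solution delivers the lower bound.

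The only genuinely delicate point I foresee is the Neumann compatibility condition needed to obtain $C^{2+\alpha}$ regularity up to the boundary from $C^{2+\alpha}$ initial data; this is the reason one works with $\partial_n\rho|_{\partial\mathcal{D}}=0$ for $\rho_0$ and uses $u|_{\partial\mathcal{D}}=0$ to ensure that the transport term does not destroy compatibility, and it is exactly where the $L^\infty$ bound on $\mathrm{div}\,u$ enters both the parabolic estimates and the comparison argument. Everything else is routine linear parabolic theory.
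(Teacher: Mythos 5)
Your strategy coincides with what the paper invokes: the paper does not prove this lemma but cites \cite{11,Lunardi} for exactly the package you describe, namely Schauder theory for the linear Neumann initial-boundary value problem (part (1), existence, uniqueness, and Lipschitz dependence of $\mathcal{S}$ on $u$ via a Schauder estimate on the difference equation) plus a spatially constant sub/super-solution comparison for part (2). The comparison argument is set up correctly: $M(t)=\delta^{-1/\beta}\exp\bigl(\int_0^t\|\mathrm{div}\,u\|_{L^\infty}\,dr\bigr)$ is a super-solution because $\mathrm{div}\,u\ge-\|\mathrm{div}\,u\|_{L^\infty}$ and $\nabla M=\Delta M=0$, and the initial and Neumann data match; the lower bound is symmetric.

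One detail in your parenthetical justification is incorrect. The assertion that $\partial_n u|_{\partial\mathcal{D}}=0$ ``follows from $u|_{\partial\mathcal{D}}=0$'' is false --- $u|_{\partial\mathcal{D}}=0$ kills tangential derivatives of $u$ on the boundary, not normal ones (e.g.\ $u(x)=x_1$ on a half-space). What the Dirichlet condition on $u$ actually gives you is that the transport term $u\cdot\nabla\rho$ vanishes pointwise on $\partial\mathcal{D}$. The Neumann compatibility needed for $C^{2+\alpha}$ regularity up to $t=0$ is the zeroth-order condition $\partial_n\rho_{0,\delta}|_{\partial\mathcal{D}}=0$, which is a requirement built into the construction of the regularized initial data $\rho_{0,\delta}$ (consistent with the boundary condition \eqref{3.2}), not a consequence of $u|_{\partial\mathcal{D}}=0$. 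With that correction the proof stands.
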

\begin{lemma} \label{lem3.3}For each $u\in C([0,T];C^2(\mathcal{D}))$ with $u|_{\partial \mathcal{D}}=0$, then there exists a unique strong solution $(c, Q)\in [L^\infty(0,T; H^1(\mathcal{D}))\cap L^2(0,T;H^2(\mathcal{D}))]^2$ to the system
\begin{eqnarray}\label{3.15*}
\left\{\begin{array}{ll}
\partial_{t}c+(u\cdot \nabla)c=\triangle c,\\
\partial_{t}Q+(u\cdot \nabla)Q+Q\Psi-\Psi Q=\Gamma {\rm H}(Q,c),\\
Q(0)=Q_0\in  H^{1}(\mathcal{D};S_0^3)~\mbox{a.e.} ~{\rm and }~\frac{\partial Q}{\partial n}\bigg|_{\partial \mathcal{D}}=0,\\
c(0)=c_0\in  H^1(\mathcal{D})~{\rm and}~ 0<\underline{c}\leq c_0\leq \bar{c}<\infty,~\frac{\partial c}{\partial n}\bigg|_{\partial \mathcal{D}}=0. \\
\end{array}\right.
\end{eqnarray}
Moreover, we have $0<\underline{c}\leq c\leq \bar{c}<\infty$. Furthermore, the mapping
\begin{eqnarray*}
\widetilde{\mathcal{S}}:  C([0,T]; C^{2}(\mathcal{D}))\rightarrow \left[L^{\infty}(0,T; H^1(\mathcal{D}))\cap L^{2}(0,T; H^{2}(\mathcal{D}))\right]^2,
\end{eqnarray*}
is continuous on set $B_R:=\left\{u\in C([0,T]; C^{2}(\mathcal{D})); \|u\|_{C([0,T]; C^{2}(\mathcal{D}))}\leq R\right\}$ and $Q\in S_0^3$ \mbox{a.e.}
\end{lemma}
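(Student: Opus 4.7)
The plan is to solve the two equations in \eqref{3.15*} sequentially, exploiting the fact that the concentration equation is decoupled from $Q$ (it depends only on $u$), while the $Q$-tensor equation depends on both $u$ and $c$. Once each is solved, the continuity of the solution map $\widetilde{\mathcal{S}}$ and the invariance $Q \in S_0^3$ follow by stability estimates.

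\textbf{Step 1 (Scalar concentration equation).} For fixed $u \in C([0,T];C^2(\mathcal{D}))$ with $u|_{\partial\mathcal{D}}=0$, the equation $\partial_t c + (u\cdot \nabla)c = \triangle c$ with Neumann boundary is a linear uniformly parabolic equation with smooth coefficients. Classical linear parabolic theory (as in \cite{Lunardi}) yields a unique strong solution $c\in L^\infty(0,T;H^1(\mathcal{D}))\cap L^2(0,T;H^2(\mathcal{D}))$. The two-sided bound $\underline{c}\leq c\leq \bar{c}$ comes from testing the equation for $c-\bar c$ (resp.\ $\underline c - c$) against $(c-\bar c)_+$ (resp.\ $(\underline c - c)_+$); the advective term produces only a $\|{\rm div}\, u\|_{L^\infty}\|( \cdot )_+\|_{L^2}^2$ contribution since $u$ vanishes on $\partial\mathcal{D}$, so Gronwall's lemma forces both truncations to remain zero.

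\textbf{Step 2 ($Q$-tensor equation).} With the bounded $c$ from Step~1 frozen, the $Q$-equation becomes a semilinear parabolic system
\[
\partial_t Q - \Gamma\triangle Q = -(u\cdot\nabla)Q -(Q\Psi-\Psi Q) - \Gamma\tfrac{c-c_*}{2}Q + \Gamma b\bigl(Q^2-\tfrac{\tr(Q^2)}{3}\mathrm{I}_3\bigr) - \Gamma c_* Q\,\tr(Q^2).
\]
I would set up a Banach fixed-point argument in $X_{T_0}=L^\infty(0,T_0;H^1)\cap L^2(0,T_0;H^2)$ by freezing $\tilde Q$ on the right-hand side and solving the linear heat equation for $Q$; in $3$D, $H^1\hookrightarrow L^6$ makes the cubic nonlinearity $Q\,\tr(Q^2)$ lie in $L^2$, which gives contractivity on a small interval $[0,T_0]$ with $T_0$ depending on $\|u\|_{C([0,T];C^2)}$, $\|c_0\|_{H^1}$, $\|Q_0\|_{H^1}$. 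To extend to $[0,T]$ I would derive a priori $H^1$ estimates: testing by $Q$ uses the crucial cancellation $(Q\Psi-\Psi Q):Q=0$ (by cyclicity of trace with symmetric $Q$) and the good sign of the $-c_*|Q|^4$ term; testing by $-\triangle Q$ and combining with the $c$-bound gives a global bound in $L^\infty_t H^1\cap L^2_t H^2$, precluding finite-time blow-up.

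\textbf{Step 3 (Symmetry and tracelessness of $Q$).} Taking the trace of the $Q$-equation and using $\tr(Q\Psi-\Psi Q)=0$, $\tr(Q^2-\tfrac{\tr(Q^2)}{3}\mathrm I_3)=0$, and $\tr(Q)=0$ initially, I see that $\tr(Q)$ satisfies a linear homogeneous advection-diffusion equation with zero initial data, hence $\tr(Q)\equiv 0$. Similarly, the antisymmetric part satisfies a linear homogeneous equation starting from zero, so $Q$ stays symmetric. This gives $Q\in S_0^3$ a.e.

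\textbf{Step 4 (Continuity of $\widetilde{\mathcal{S}}$ on $B_R$).} For $u_1,u_2\in B_R$, let $(c_i,Q_i)$ be the corresponding solutions. Writing the equations for the differences $c_1-c_2$ and $Q_1-Q_2$, one gets forcing terms linear in $u_1-u_2$ times bounded quantities (controlled by $R$ and the a priori estimates from Step~1–2). Standard energy estimates in $L^2_tH^1\cap L^\infty_tL^2$ for $c_1-c_2$ and in $L^2_tH^2\cap L^\infty_tH^1$ for $Q_1-Q_2$, combined with Gronwall, yield
\[
\|(c_1-c_2,Q_1-Q_2)\|_{\widetilde X}\leq C(R,T)\|u_1-u_2\|_{C([0,T];C^2)},
\]
which is the required continuity.

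The main obstacle will be the $Q$-equation: proving the global $H^1\cap H^2$ estimate uniform in time in the presence of the cubic nonlinearity and the transport/rotation coupling $Q\Psi-\Psi Q$. The symmetry cancellation $(Q\Psi-\Psi Q):Q=0$ and the $-c_*|Q|^4$ sign term are the two structural features that make the estimate close; without them, the nonlinearity would not be absorbable.
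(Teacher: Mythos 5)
The paper does not give its own proof of this lemma --- it is quoted as a result from \cite{11,Lunardi} --- so I assess your proposal on its own terms. The overall strategy (solve the decoupled scalar equation for $c$ first, then treat the $Q$-equation as a semilinear parabolic system with $c$ frozen, use a fixed point for local existence, and recover $Q\in S_0^3$ by showing $\tr(Q)$ and the antisymmetric part satisfy linear homogeneous Cauchy problems) is correct in outline and is consistent with the paper's remark that ``the proof of $Q\in S_0^3$ a.e.\ depends on the uniqueness of solution.''

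There is, however, a genuine circularity in the order of your Steps 2 and 3. Your global $H^1\cap H^2$ estimate in Step 2 invokes two structural facts that hold only for \emph{symmetric} $Q$: the cancellation $(Q\Psi-\Psi Q):Q=0$, and the sign $\tr(Q^2)=|Q|^2\ge 0$, which you need to make $-c_*\!\int \tr(Q^2)\,|Q|^2\,dx$ a good term rather than a source of possible blow-up. (For non-symmetric $Q$, $\tr(Q^2)=Q_{ij}Q_{ji}$ can be negative, so the cubic term has the wrong sign.) But symmetry is established only in Step 3, and Step 3 in turn presupposes the solution from Step 2. The fix is to reorder: first obtain local existence and uniqueness on a small interval $[0,T_0]$ by the fixed-point argument with no symmetry assumed; then, on $[0,T_0]$, run your Step 3 argument (the trace and antisymmetric-part equations are linear advection--diffusion--reaction equations with coefficients controlled by the already-constructed local solution and zero data, so uniqueness forces both to vanish); only then carry out the a priori estimate using the now-legitimate cancellations, which yields a bound on $\|Q(T_0)\|_{H^1}$ independent of $T_0$, permitting continuation. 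As written, your proof asserts the global estimate before the symmetry is available. You might also record that the requirement $c_0\in H^1(\mathcal{D})$ (rather than $L^2$) is precisely what the paper flags as needed to secure the uniqueness of $(c,Q)$ underlying this whole bootstrap; your Step 1 takes the $H^1$ regularity for granted without tying it to that point.
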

{\bf Remark}.  The proof of $Q\in S_0^3$ a.e. depends on the uniqueness of solution to system (\ref{3.15*}), therefore, we have to lift the regularity of initial data $c$ to $H^1(\mathcal{D})$ rather than $L^2(\mathcal{D})$ in establishing the existence of strong solutions, making further effort to achieve the uniqueness. It will also be used for defining the Galerkin approximate solution for fixed truncation parameter $K$ introduced later.

With these results in hand, we now find the approximate velocity field $u_{n}$ satisfying the integral equation
\begin{eqnarray}\label{3.15}
&& \int_{\mathcal{D}}\rho u_{n}\phi dx-\int_{\mathcal{D}}m_{0}\phi dx\nonumber\\
&&=-\int_{0}^{t}\int_{\mathcal{D}}({\rm div} (\rho u_{n}\otimes u_{n})-\mu_1\triangle u_{n}-(\mu_1+\mu_2)\nabla ({\rm div} u_{n})+\nabla\rho^{\gamma}+\delta\nabla\rho^{\beta})\phi dxds\nonumber\\&&\quad+ \int_{0}^{t}\int_{\mathcal{D}}\sigma^*\nabla\cdot(c^2Q)\phi dxds-\int_{0}^{t}\int_{\mathcal{D}}\epsilon \phi\nabla \rho\cdot\nabla u_{n}  dxds\nonumber\\ &&\quad+\int_{0}^{t}\int_{\mathcal{D}}\nabla\cdot({\rm F}(Q){\rm I}_3-\nabla Q\odot \nabla Q)\phi+\nabla\cdot(Q\triangle Q-\triangle Q Q)\phi dxds\nonumber\\&& \quad+ \int_{0}^{t}\int_{\mathcal{D}}\sqrt{\rho}P_n(\sqrt{\rho} f(\rho,\rho u_n, c, Q))\phi dxd\mathcal{W},
\end{eqnarray}
for $t\in [0,T]$, and $\phi$ belongs to the finite dimensional space $X_n$ defined by ${\rm span} \{h_{i}\}_{i=1}^{n}$, where the family of smooth functions $\{h_{i}\}_{i=1}^{n}$ is an orthonormal basis of $H^{1}(\mathcal{D})$, and $P_{n}$ be the orthogonal projection from $L^{2}(\mathcal{D})$ into $X_{n}$.

Define by the operator $\mathcal{M}[\rho]:X_n\rightarrow X^*_n$
\begin{eqnarray*}
\langle\mathcal{M}[\rho]u,v\rangle=\int_{\mathcal{D}}\rho u\cdot v dx~{\rm for}~u,v\in X_n.
\end{eqnarray*}
From the definition, we know $\mathcal{M}[\rho]u=P_n(\rho u)$. Moreover, $\mathcal{M}[\rho]$ is a positive symmetric operator with following properties,
\begin{eqnarray}\label{3.11*}
\|\mathcal{M}[\rho]^{-1}\|_{\mathcal{L}(X^*_n, X_n)}\leq \|\rho^{-1}\|_{C(\mathcal{D})},
\end{eqnarray}
and
\begin{eqnarray}\label{3.12*}
\|\mathcal{M}[\rho_1]^{-1}-\mathcal{M}[\rho_2]^{-1}\|_{\mathcal{L}(X^*_n, X_n)}\leq \|\rho_1^{-1}\|_{C(\mathcal{D})}\|\rho_2^{-1}\|_{C(\mathcal{D})}\|\rho_1-\rho_2\|_{L^1(\mathcal{D})}.
\end{eqnarray}

Introduce the functional $\mathcal{N}[\rho,u,c, Q](\varphi)$ by
\begin{eqnarray*}
&&\mathcal{N}[\rho,u,c,Q](\varphi)=\int_{\mathcal{D}}(-{\rm div}(\rho u\otimes u)- \nabla(\rho^\gamma+\delta\rho^\beta)+\mu_1\Delta u+(\mu_1+\mu_2)\nabla ({\rm div} u)\\&&\qquad\qquad\qquad\qquad-\epsilon\nabla \rho\cdot\nabla u+\nabla\cdot({\rm F}(Q){\rm I}_3-\nabla Q\odot \nabla Q)\\ &&\qquad\qquad\qquad\qquad+\nabla\cdot(Q\triangle Q-\triangle Q Q)+\sigma^*\nabla\cdot(c^2Q))\cdot \varphi dx,
\end{eqnarray*}
for all $\varphi\in X_n$.  {Due to the technical difficulty, we need further truncation to $u_n$. Following the idea of \cite{Hofmanova}}, define the $C^\infty$-smooth cut-off function
\begin{eqnarray*}
\xi_K(z)=\left\{\begin{array}{ll}
1,~ |z|\leq K,\\
0,~ |z|>2K.
\end{array}\right.
\end{eqnarray*}
Let $u^K=\sum_{i=1}^{n}\xi_K(\alpha_i)\alpha_ih_i$, then we have $\|u^K\|_{C([0,T];C^2(\mathcal{D}))}\leq 2K$ and the truncation operator ${\rm Tr}: u\rightarrow u^K$ satisfies
\begin{eqnarray}\label{3.13*}
{\rm Tr}: X_n\rightarrow X_n, ~~~{\rm and } ~~\|{\rm Tr}(u)-{\rm Tr}(v)\|_{X_n}\leq C(n)\|u-v\|_{X_n}.
\end{eqnarray}
 {Then, we rewrite (\ref{3.15}) as}
\begin{align}\label{3.16}
u_n(t)&=\mathcal{M}^{-1}\left[\mathcal{S}(u_n^K)\right]\bigg(m_0^*+\int_0^t\mathcal{N}[\mathcal{S}(u_n^K),u_n^K, \widetilde{\mathcal{S}}(u_n^K)]ds\nonumber\\ &\quad+\mathrm{Tr}\left(\int_{0}^{t}\mathcal{M}^\frac{1}{2}(\mathcal{S}(u_n^K))P_n(\sqrt{\mathcal{S}(u_n^K)} f(\mathcal{S}(u_n^K),\mathcal{S}(u_n^K) u_n^K,\widetilde{\mathcal{S}}(u_n^K)))d\mathcal{W}\right)\bigg).
\end{align}
Here, the stochastic integral should be understood evolving on space $X_n^*$.

The mapping $\mathcal{Y}$ from $L^{2}(\Omega; C([0,T]; C^{2}(\mathcal{D})))$ into itself is defined by the right hand side of (\ref{3.16}). For fixed $n, K$, we can show the mapping $\mathcal{Y}$ is the contraction for $T^*$ small enough, for further details see \cite{12,14} for the deterministic part. Next, we give the estimate of stochastic term. Using \eqref{3.13*} and the triangle inequality, we have
\begin{align}\label{3.14*}
& {\quad\mathbb{E}\sup_{0\leq t\leq T^*}\bigg\|\mathcal{M}^{-1}(\mathcal{S}(u_n^K))}\nonumber\\ &\qquad\qquad\quad\times\mathrm{Tr}\left(\int_{0}^{t}\mathcal{M}^\frac{1}{2}(\mathcal{S}(u_n^K)) P_n(\sqrt{\mathcal{S}(u_n^K)} f(\mathcal{S}(u_n^K),\mathcal{S}(u_n^K) u_n^K,\widetilde{\mathcal{S}}(u_n^K)))d\mathcal{W}\right)\nonumber\\&\quad-\mathcal{M}^{-1}(\mathcal{S}(v_n^K))
\mathrm{Tr}\left(\int_{0}^{t}\mathcal{M}^\frac{1}{2}(\mathcal{S}(v_n^K))P_n(\sqrt{\mathcal{S}(v_n^K)} f(\mathcal{S}(v_n^K),\mathcal{S}(v_n^K) v_n^K,\widetilde{\mathcal{S}}(v_n^K)))d\mathcal{W}\right)\bigg\|^2_{X_n}\nonumber\\
&\leq \mathbb{E}\sup_{0\leq t\leq T^*}\left\|\mathcal{M}^{-1}(\mathcal{S}(u_n^K))-\mathcal{M}^{-1}(\mathcal{S}(v_n^K))\right\|^2_{\mathcal{L}(X^*_n, X_n)}\nonumber\\
&\qquad\quad\times\left\|\mathrm{Tr}\left(\int_{0}^{t}\mathcal{M}^\frac{1}{2}(\mathcal{S}(u_n^K)) P_n(\sqrt{\mathcal{S}(u_n^K)} f(\mathcal{S}(u_n^K),\mathcal{S}(u_n^K) u_n^K,\widetilde{\mathcal{S}}(u_n^K)))d\mathcal{W}\right)\right\|^2_{X_n}\nonumber\\&\quad+C(n)\mathbb{E}\sup_{0\leq t\leq T^*}\left\|\mathcal{M}^{-1}(\mathcal{S}(v_n^K))\right\|^2_{\mathcal{L}(X^*_n, X_n)}\nonumber\\ &\qquad\quad\times\bigg\|\int_{0}^{t}\mathcal{M}^\frac{1}{2}(\mathcal{S}(u_n^K)) P_n(\sqrt{\mathcal{S}(u_n^K)} f(\mathcal{S}(u_n^K),\mathcal{S}(u_n^K) u_n^K,\widetilde{\mathcal{S}}(u_n^K)))d\mathcal{W}\nonumber\\&\qquad\quad\quad-\int_{0}^{t}\mathcal{M}^\frac{1}{2}(\mathcal{S}(v_n^K)) P_n(\sqrt{\mathcal{S}(v_n^K)} f(\mathcal{S}(v_n^K),\mathcal{S}(v_n^K) v_n^K,\widetilde{\mathcal{S}}(v_n^K)))d\mathcal{W}\bigg\|^2_{X_n}\nonumber\\
&=:L_1+L_2.
\end{align}
For $L_1$, using \eqref{3.12*}, Lemma \ref{lem3.2}(2) and the property of operator $\mathrm{Tr}$, we have
\begin{align}\label{3.15&}
L_1&\leq T^*C(n,K,T^*)\mathbb{E}\sup_{0\leq t\leq T^*}\|u_n^K-v_n^K\|^2_{X_n}.
\end{align}
For $L_2$, using \eqref{3.11*}, the boundedness of $\mathcal{S}(u_n^K)$, we have from the Burkholder-Davis-Gundy inequality
\begin{align}
L_2&\leq C(K,\delta,n)\mathbb{E}\sup_{0\leq t\leq T^*}\bigg\|\int_{0}^{t}\mathcal{M}^\frac{1}{2}(\mathcal{S}(u_n^K)) P_n(\sqrt{\mathcal{S}(u_n^K)} f(\mathcal{S}(u_n^K),\mathcal{S}(u_n^K) u_n^K,\widetilde{\mathcal{S}}(u_n^K)))d\mathcal{W}\nonumber\\
&\qquad\qquad\qquad-\int_{0}^{t}\mathcal{M}^\frac{1}{2}(\mathcal{S}(v_n^K)) P_n(\sqrt{\mathcal{S}(v_n^K)} f(\mathcal{S}(v_n^K),\mathcal{S}(v_n^K) v_n^K,\widetilde{\mathcal{S}}(v_n^K)))d\mathcal{W}\bigg\|^2_{X_n}\nonumber\\
&\leq C(K,\delta,n)\mathbb{E}\int_{0}^{T^*}\sum_{k\geq 1}\bigg\|\mathcal{M}^\frac{1}{2}(\mathcal{S}(u_n^K)) P_n(\sqrt{\mathcal{S}(u_n^K)} f(\mathcal{S}(u_n^K),\mathcal{S}(u_n^K) u_n^K,\widetilde{\mathcal{S}}(u_n^K))e_k)\nonumber\\&\qquad\qquad\qquad\quad-\mathcal{M}^\frac{1}{2}(\mathcal{S}(v_n^K)) P_n(\sqrt{\mathcal{S}(v_n^K)} f(\mathcal{S}(v_n^K),\mathcal{S}(v_n^K) v_n^K,\widetilde{\mathcal{S}}(v_n^K))e_k)\bigg\|^2_{X_n}ds\nonumber\\
&\leq C(K,\delta,n)\mathbb{E}\Bigg(\int_{0}^{T^*}\sum_{k\geq 1}\bigg\|(\mathcal{M}^\frac{1}{2}(\mathcal{S}(u_n^K))-\mathcal{M}^\frac{1}{2}(\mathcal{S}(v_n^K)))\nonumber \\ &\qquad\qquad\qquad\qquad\quad\times P_n(\sqrt{\mathcal{S}(u_n^K)}f(\mathcal{S}(u_n^K),\mathcal{S}(u_n^K) u_n^K,\widetilde{\mathcal{S}}(u_n^K))e_k)\bigg\|^2_{X_n}ds\nonumber\\&\qquad\qquad\qquad+\int_{0}^{T^*}\sum_{k\geq 1}\bigg\|\mathcal{M}^\frac{1}{2}(\mathcal{S}(v_n^K)) P_n(\sqrt{\mathcal{S}(v_n^K)}f(\mathcal{S}(v_n^K),\mathcal{S}(v_n^K) v_n^K,\widetilde{\mathcal{S}}(v_n^K))e_k\nonumber\\&\qquad\qquad\quad\qquad-\sqrt{\mathcal{S}(u_n^K)}f(\mathcal{S}(u_n^K),\mathcal{S}(u_n^K) u_n^K,\widetilde{\mathcal{S}}(u_n^K))e_k)\bigg\|^2_{X_n}ds\Bigg)\nonumber\\
&=:L_{21}+L_{22}.
\end{align}
For $L_{21}$, using the continuity of $\mathcal{S}(u_n^K)$, the equivalence of norms on finite dimensional space, condition \eqref{2.1*}, the boundedness of $u_n^K, \mathcal{S}(u_n^K)$ and Lemma \ref{lem3.3},  we have
\begin{align}
L_{21}&\leq C(K,\delta,n)\mathbb{E}\int_{0}^{T^*}\left\|\mathcal{M}^\frac{1}{2}(\mathcal{S}(u_n^K))-\mathcal{M}^\frac{1}{2}(\mathcal{S}(v_n^K))\right\|^2_{\mathcal{L}(X_n, X^*_n)}\nonumber\\ &\qquad\quad\qquad\qquad\times\sum_{k\geq 1}\left\|P_n(\sqrt{\mathcal{S}(u_n^K)}f(\mathcal{S}(u_n^K),\mathcal{S}(u_n^K) u_n^K,\widetilde{\mathcal{S}}(u_n^K))e_k)\right\|_{L^2}^2ds\Bigg)\nonumber\\
&\leq T^*C(n,K,T^*,\delta)\mathbb{E}\Bigg(\sup_{0\leq t\leq T^*}\|u_n^K-v_n^K\|^2_{X_n}\nonumber\\
&\qquad\qquad\qquad\quad\times \int_{0}^{T^*} \sum_{k\geq 1}\left\|\sqrt{\mathcal{S}(u_n^K)} f(\mathcal{S}(u_n^K),\mathcal{S}(u_n^K) u_n^K,\widetilde{\mathcal{S}}(u_n^K))e_k\right\|_{L^2}^2ds\Bigg)\nonumber\\
&\leq T^*C(n,K,T^*,\delta)\mathbb{E}\Bigg(\sup_{0\leq t\leq T^*}\|u_n^K-v_n^K\|^2_{X_n}\nonumber\\ &\qquad\qquad\qquad\quad\times \int_{0}^{T^*}\int_{\mathcal{D}}|\rho_n^K|^{\gamma}+\rho_n^K|c_n^K, \nabla Q_n^K|^\frac{2(\gamma-1)}{\gamma}+\rho_n^K|u_n^K|^2dxds\Bigg)\nonumber\\
& \leq T^*C(n,K,T^*,\delta)\mathbb{E}\Bigg(\sup_{0\leq t\leq T^*}\|u_n^K-v_n^K\|^2_{X_n}\nonumber\\ &\qquad\qquad\qquad\quad\times \int_{0}^{T^*}\int_{\mathcal{D}}|\rho_n^K|^{\gamma}+|c_n^K, \nabla Q_n^K|^2+\rho_n^K|u_n^K|^2dxds\Bigg)\nonumber\\
&\leq (T^*)^2C(n,K,T^*,\delta)\mathbb{E}\sup_{0\leq t\leq T^*}\|u_n^K-v_n^K\|^2_{X_n}.
\end{align}
For $L_{22}$, using the continuity of $\mathcal{S}(u_n^K), \widetilde{\mathcal{S}}(u_n^K)$(see Lemma \ref{lem3.3}), the boundedness of $\mathcal{S}(u^K)$, condition \eqref{2.2*} and the equivalence of norms on finite dimensional space, we also have
\begin{align}\label{3.18*}
L_{22}\leq T^*C(n,K,T^*)\mathbb{E}\sup_{0\leq t\leq T^*}\|u_n^K-v_n^K\|^2_{X_n}.
\end{align}
Then, taking into account of \eqref{3.14*}-\eqref{3.18*}, we infer that there exists a sequence of approximate solutions $u^K_{n}\in L^{2}(\Omega; C([0,T_{*}]; X_{n}))$ to equation (\ref{3.16}) for small time $T^{*}$ by the Banach fixed point theorem. Here we first assume that the a priori estimates (\ref{3.8}) hold uniformly in $n, K$ which allows us to extend the existence time $T^*$ to $T$ for any $T>0$. Namely, we proved the existence and uniqueness of solution $u_n^K\in L^{2}(\Omega; C([0,T]; C^{2}(\mathcal{D})))$ to equation (\ref{3.16}) for fixed $n, K$.

Next, we build the global existence of Galerkin approximate solution to system (\ref{Equ3.1}) for any fixed $n$ by letting $K\rightarrow \infty$. Define the stopping time,
%\begin{eqnarray*}
%&&\tau_K=\inf\bigg\{t\geq 0;\sup_{s\in [0,t]}\|u_n^K(s)\|_{L^2}\\&&\qquad\qquad\qquad\qquad+\sup_{s\in [0,t]}\left\|\int_{0}^{s} \mathcal{S}(u_n^K)f(\mathcal{S}(u_n^K),\mathcal{S}(u_n^K) u_n^K,\widetilde{\mathcal{S}}(u_n^K))d\mathcal{W}\right\|_{L^2}\geq K\bigg\}.
%\end{eqnarray*}
\begin{eqnarray*}
&&\tau_K=\inf\bigg\{t\geq 0;\sup_{s\in [0,t]}\|u_n^K(s)\|_{L^2}\\&&\quad\quad+\sup_{s\in [0,t]}\left\|\int_{0}^{s} \mathcal{M}^\frac{1}{2}(\mathcal{S}(u_n^K)) P_n(\sqrt{\mathcal{S}(u_n^K)}f(\mathcal{S}(u_n^K),\mathcal{S}(u_n^K) u_n^K,\widetilde{\mathcal{S}}(u_n^K)))d\mathcal{W}\right\|_{L^2}\geq K\bigg\}.
\end{eqnarray*}
Observe that the sequence of the stopping time $\tau_K$ is increasing. Define $\rho_n^K=\mathcal{S}(u_n^K)$, $(c_n^K, Q_n^K)=\widetilde{\mathcal{S}}(u_n^K)$, then $(\rho_n^K, u_n^K, c_n^K, Q_n^K)$ is the unique solution to system (\ref{Equ3.1}). Using the monotonicity of the stopping time and the uniqueness of solution, for $K_1\leq K_2$, we have $(\rho_n^{K_1}, u_n^{K_1}, c_n^{K_1}, Q_n^{K_1})=(\rho_n^{K_2}, u_n^{K_2}, c_n^{K_2}, Q_n^{K_2})$ on $[0, \tau_{K_1})$. Therefore, we could define the solution $(\rho_n, u_n, c_n, Q_n)=(\rho_n^K, u_n^K, c_n^K, Q_n^K)$ on interval $[0, \tau_{K})$. In order to extend the existence time to $[0,T]$, we show that
\begin{eqnarray*}
\mathbb{P}\left\{\sup_{K\in \mathbb{N}^+}\tau_K=T\right\}=1.
\end{eqnarray*}
Since the stopping time $\tau_{K}$ is increasing, we have
\begin{align*}
&\mathbb{P}\left\{\sup_{K\in \mathbb{N}^+}\tau_K<T\right\}<\mathbb{P}\{\tau_K<T\}\\
&\leq \mathbb{P}\left\{\sup_{t\in [0,T]}\|u_n^K\|_{L^2}>\frac{K}{2}\right\}\\
&\quad+\mathbb{P}\left\{\sup_{t\in [0,T]}\left\|\int_{0}^{t}\mathcal{M}^\frac{1}{2}(\mathcal{S}(u_n^K)) P_n(\sqrt{\mathcal{S}(u_n^K)} f(\mathcal{S}(u_n^K),\mathcal{S}(u_n^K) u_n^K,\widetilde{\mathcal{S}}(u_n^K)))d\mathcal{W}\right\|_{L^2}>\frac{K}{2}\right\}\\&=:J_1+J_2.
\end{align*}
Using the Burkholder-Davis-Gundy inequality, the Chebyshev inequality and the equivalence of norms on finite-dimensional space, and the embedding $H^{-l}(\mathcal{D})\hookrightarrow L^{1}(\mathcal{D})$ for $l>\frac{3}{2}$, and the condition (\ref{2.1}), the bound (\ref{3.8}), we have
\begin{align}\label{3.19*}
J_2&\leq \frac{2}{K}\mathbb{E}\left(\sup_{t\in [0,T]}\left\|\int_{0}^{t}\mathcal{M}^\frac{1}{2}(\mathcal{S}(u_n^K)) P_n\sqrt{\mathcal{S}(u_n^K)} f(\mathcal{S}(u_n^K),\mathcal{S}(u_n^K) u_n^K,\widetilde{\mathcal{S}}(u_n^K))d\mathcal{W}\right\|_{L^2}\right)\nonumber\\
&\leq  \frac{C}{K}\mathbb{E}\left(\sup_{t\in [0,T]}\left\|\int_{0}^{t} \mathcal{M}^\frac{1}{2}(\mathcal{S}(u_n^K)) P_n\sqrt{\mathcal{S}(u_n^K)} f(\mathcal{S}(u_n^K),\mathcal{S}(u_n^K) u_n^K,\widetilde{\mathcal{S}}(u_n^K))d\mathcal{W}\right\|_{H^{-l}}\right)\nonumber\\
&\leq \frac{C}{K}\mathbb{E}\int_0^T\sum_{k\geq 1}\left\|\mathcal{M}^\frac{1}{2}(\mathcal{S}(u_n^K)) P_n(\sqrt{\mathcal{S}(u_n^K)}f(\mathcal{S}(u_n^K),\mathcal{S}(u_n^K) u_n^K,\widetilde{\mathcal{S}}(u_n^K))e_k)\right\|_{L^1}^2dt\nonumber\\
&\leq \frac{C}{K}\mathbb{E}\int_0^T\int_{\mathcal{D}}\sum_{k\geq 1}\left|\sqrt{\mathcal{S}(u_n^K)}f(\mathcal{S}(u_n^K),\mathcal{S}(u_n^K) u_n^K,\widetilde{\mathcal{S}}(u_n^K))e_k\right|^2dx\int_{\mathcal{D}}\mathcal{S}(u_n^K)dxdt\nonumber\\ & \leq \frac{C}{K} \left(\rho_n^K(0)\right)_m\mathbb{E}\int_0^T\int_{\mathcal{D}}|\rho_n|^\gamma+\rho_n|u_n|^2+|c_n, \nabla Q_n|^2dxdt<  \frac{C}{K},
\end{align}
where $C$ is independent of $K$, leading to $J_2\rightarrow 0$ as $K\rightarrow\infty$. Corollary 3.2 in \cite{Hofmanova} given $J_1\rightarrow 0$ as $K\rightarrow\infty$. Therefore, passing $K\rightarrow \infty$, we have
$$\mathbb{P}\left\{\sup_{K\in \mathbb{N}^+}\tau_K<T\right\}=0.$$
This means that no blow up appears in a finite time, we could extend the existence time to $[0,T]$ for any $T>0$.

We next establish the necessary a priori estimates of approximate solution. To simplify the notation, we replace $(\rho_n^K, \rho_n^K u_n^K, c_n^K, Q_n^K)$ by $(\rho, \rho u, c, Q)$.
\begin{lemma}\label{lem3.1} Suppose that $(\rho, \rho u, c, Q)$ is the Galerkin approximate solution to system (\ref{Equ3.1})-(\ref{3.5}), and $f$ satisfies the condition (\ref{2.1}). Then, there exists a constant $C$ which is independent of $n, K$ but depends on $(\mu_1, \mu_2, \sigma^*, c_*,b, T, p, \Gamma)$ and initial data such that for all $1\leq p< \infty$
\begin{eqnarray}\label{3.8}
&&\mathbb{E}\left[\sup_{t\in [0,T]}\left(\|c, \nabla Q, \sqrt{\rho}u\|_{L^2}^2+\|Q\|_{L^4}^4+\frac{1}{\gamma-1}\|\rho\|_{L^\gamma}^\gamma+\frac{\delta}{\beta-1}\|\rho\|_{L^\beta}^\beta\right)\right]^p\nonumber\\
&&+\mathbb{E}\left(\int_0^T\|\nabla c, \nabla u,  \triangle Q, {\rm div}u\|_{L^2}^2+\|Q\|_{L^6}^6dt\right)^p\nonumber\\
&&+\mathbb{E}\left(\int_0^{T}\int_{\mathcal{D}}\epsilon(\gamma \rho^{\gamma-2}+\delta \beta\rho^{\beta-2})|\nabla \rho|^2dxdt\right)^p\leq C.
\end{eqnarray}
Moreover, we have
\begin{eqnarray}
&&\sqrt{\epsilon}\rho \in L^{p}(\Omega; L^{2}(0,T;H^{1}(\mathcal{D}))),\label{3.8*}\\
&&\rho u\in L^{p}(\Omega; L^{\infty}(0,T;L^\frac{2\beta}{\beta+1}(\mathcal{D}))).\label{3.9*}
\end{eqnarray}
\end{lemma}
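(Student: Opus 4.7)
The plan is to derive a stochastic energy identity by applying It\^o's formula to a Lyapunov functional $\mathcal{E}(t)$ that combines the kinetic, pressure, elastic and Landau--de Gennes bulk energies, and then to close the $L^p(\Omega)$ estimates via the Burkholder--Davis--Gundy inequality and a stochastic Gr\"onwall argument. At the Galerkin level all processes are classical by Lemmas \ref{lem3.2}--\ref{lem3.3}, so testing the momentum equation against $u = u_n^K \in X_n$ is legitimate. A key input is that Lemma \ref{lem3.3} yields, uniformly in $(n,K)$, the pointwise bound $0 < \underline{c}\le c \le \bar c$ together with the symmetry and tracelessness of $Q$; in particular, $\sup_t\|c\|_{L^2}^2$ in (\ref{3.8}) is immediate, while $\int_0^T\|\nabla c\|_{L^2}^2\,dt$ follows by testing the $c$-equation against $c$ and integrating by parts.

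Concretely, introduce
\[
\mathcal{E}(t) = \int_{\mathcal{D}} \Bigl( \tfrac{1}{2}\rho|u|^2 + \tfrac{1}{\gamma-1}\rho^\gamma + \tfrac{\delta}{\beta-1}\rho^\beta + \tfrac{1}{2}|\nabla Q|^2 + \tfrac{1}{2}|Q|^2 + \tfrac{c_*}{4}|Q|^4 \Bigr) dx.
\]
Combining It\^o's formula on $\tfrac12\int\rho|u|^2\,dx$ (using the momentum equation tested against $u$ together with the density equation tested against $\tfrac12|u|^2$), the density equation tested against $\tfrac{\gamma}{\gamma-1}\rho^{\gamma-1} + \tfrac{\delta\beta}{\beta-1}\rho^{\beta-1}$, and the $Q$-equation tested against $-H(Q,c)$, produces an identity $d\mathcal{E} + \mathcal{D}\,dt = \mathcal{R}\,dt + dM$, where the dissipation $\mathcal{D}$ contains $\mu_1\|\nabla u\|_{L^2}^2 + (\mu_1+\mu_2)\|\mathrm{div}\,u\|_{L^2}^2 + \epsilon\int_{\mathcal{D}}(\gamma\rho^{\gamma-2}+\delta\beta\rho^{\beta-2})|\nabla\rho|^2\,dx + \Gamma\|H(Q,c)\|_{L^2}^2$. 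Expanding $|H(Q,c)|^2$, the square $\Gamma c_*^2 \int|Q|^6\,dx$ dominates and, together with the $L^\infty$ bound on $c$, absorbs all indefinite lower-order cross terms via Young's inequality, delivering the bound on $\int_0^T\|Q\|_{L^6}^6\,dt$, while $\Gamma\|\Delta Q\|_{L^2}^2$ supplies the Laplacian dissipation.

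The crucial algebraic cancellations are the following: $\int\nabla\cdot(Q\Delta Q - \Delta Q\,Q)\cdot u\,dx = -\int(Q\Psi - \Psi Q):\Delta Q\,dx$ after integration by parts, using the symmetry and tracelessness of $Q$ (and of $\Delta Q$), which exactly matches the rotational contribution from the $Q$-equation tested against $-\Delta Q$; $\int\nabla\cdot(F(Q)I_3 - \nabla Q\odot\nabla Q)\cdot u\,dx$ matches the transport term of the $Q$-equation up to the total time derivative $\tfrac{d}{dt}\int F(Q)\,dx$; and $\sigma^*\int\nabla\cdot(c^2Q)\cdot u\,dx$ is controlled by $\|c\|_{L^\infty}^2\|Q\|_{L^2}\|\nabla u\|_{L^2}$ via Young, absorbable into $\mathcal{D}$. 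After these cancellations the remainder $\mathcal{R}$ is majorized by $C(1+\mathcal{E}) + \tfrac12\mathcal{D}$. The It\^o correction $\tfrac12\sum_k\int|P_n(\sqrt{\rho}\,f_k)|^2\,dx$ is controlled by $\|P_n\|\le 1$ and assumption (\ref{2.1}), yielding $C\int(\rho^\gamma + \rho|u|^2 + \rho|c,\nabla Q|^{2(\gamma-1)/\gamma})\,dx \le C(1+\mathcal{E})$ after H\"older and the $c$-bound. Raising to the $p$-th power and applying the Burkholder--Davis--Gundy inequality to $M$ gives
\[
\mathbb{E}\sup_{s\le t}|M(s)|^p \le c_p\,\mathbb{E}\Bigl(\int_0^t \|\rho u\|_{L^2}^2 \sum_k \|P_n(\sqrt{\rho}f_k)\|_{L^2}^2\,ds\Bigr)^{p/2} \le \tfrac12\,\mathbb{E}\sup_{s\le t}\mathcal{E}(s)^p + C\,\mathbb{E}\int_0^t(1+\mathcal{E}(s)^p)\,ds
\]
by Young, and a standard stochastic Gr\"onwall argument then delivers (\ref{3.8}), with constants independent of $(n, K)$ thanks to (\ref{3.7}).

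Finally, (\ref{3.8*}) follows by testing the density equation in (\ref{Equ3.1}) against $\rho$, producing $\epsilon\int_0^T\|\nabla\rho\|_{L^2}^2\,dt$ bounded by H\"older (using $\beta > 6 > 4$) and (\ref{3.8}); and (\ref{3.9*}) follows from the factorization $\|\rho u\|_{L^{2\beta/(\beta+1)}} \le \|\rho\|_{L^\beta}^{1/2}\|\sqrt{\rho}u\|_{L^2}$ combined with the preceding bounds. The main obstacle I anticipate is the careful bookkeeping of the $Q$-coupling cancellations: without the symmetry and tracelessness of $Q$, the body force $\nabla\cdot(Q\Delta Q - \Delta Q\,Q)$ tested against $u$ would not produce a quantity matched by the $Q$-equation test, leaving an uncontrollable higher-order remainder; these algebraic properties at the Galerkin level are precisely what Lemma \ref{lem3.3} guarantees.
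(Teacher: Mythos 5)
Your overall strategy matches the paper's: build a Lyapunov functional combining kinetic, pressure, elastic, and bulk Landau--de Gennes energies, derive a stochastic energy identity via It\^o's formula, exploit the $L^\infty$ bound on $c$ and the symmetric--traceless structure of $Q$ to cancel the highest-order couplings, and close in $L^p(\Omega)$ with Burkholder--Davis--Gundy plus a stochastic Gr\"onwall argument. That is indeed how (\ref{3.8}) is proved, and your derivations of (\ref{3.8*}) and (\ref{3.9*}) are exactly the paper's.

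There is, however, one concrete gap in your treatment of the $Q$-equation. You propose testing it against $-\mathrm{H}(Q,c)$ and claim this yields $d\mathcal{E}+\mathcal{D}\,dt=\mathcal{R}\,dt+dM$ with the $\mathcal{E}$ you wrote. But $\mathrm{H}(Q,c)$ contains the $c$-dependent term $-\frac{c-c_*}{2}Q$, so the pairing of $\partial_t Q$ with $-\mathrm{H}(Q,c)$ produces $\frac14\int_{\mathcal{D}}(c-c_*)\,\partial_t|Q|^2\,dx$, which is \emph{not} a total time derivative. Rewriting it as $\frac14\frac{d}{dt}\int_{\mathcal{D}}(c-c_*)|Q|^2\,dx-\frac14\int_{\mathcal{D}}(\partial_t c)|Q|^2\,dx$ and substituting $\partial_t c=\Delta c-(u\cdot\nabla)c$ generates new cross terms such as $\int_{\mathcal{D}}\nabla c\cdot\nabla|Q|^2\,dx$ and $\int_{\mathcal{D}}(u\cdot\nabla c)|Q|^2\,dx$ that you neither list nor absorb; the second in particular is a degree-four interaction of $\nabla u$, $\nabla c$, $Q$ whose Young/Gr\"onwall treatment is delicate and is precisely what the paper's remark following Lemma~\ref{lem3.1} flags. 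Likewise, the $b$-part of $-\mathrm{H}$ contributes $-\tfrac{b}{3}\frac{d}{dt}\int_{\mathcal{D}}\mathrm{tr}(Q^3)\,dx$, an indefinite total derivative missing from your $\mathcal{E}$; and your $\mathcal{E}$ contains $\tfrac12|Q|^2+\tfrac{c_*}{4}|Q|^4$, which arise from pairing $\partial_t Q$ with $-(\Delta Q-Q-c_*Q\,\mathrm{tr}(Q^2))$, not with $-\mathrm{H}(Q,c)$, so your energy and your test function are inconsistent. The paper resolves all of this by testing instead against $-(\Delta Q-Q-c_*Q\,\mathrm{tr}(Q^2))$: this avoids the $c$-$Q$ interaction entirely, makes the $\partial_t Q$ pairing an exact total derivative, and still produces the needed $\Gamma\|\Delta Q\|_{L^2}^2$ and $c_*^2\Gamma\|Q\|_{L^6}^6$ dissipations once one expands $\Gamma\int\mathrm{H}(Q,c):(\Delta Q-Q-c_*Q\,\mathrm{tr}(Q^2))\,dx$ (the sign-definite term $\int 2c_*\Gamma\,Q|Q|^2:\Delta Q\,dx\le0$ handles the only dangerous cross term). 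If you replace your test function accordingly, the rest of your argument goes through. Finally, note that to justify taking $\mathbb{E}\sup_t\mathcal{E}^p$ before Gr\"onwall you should, as the paper does, first work up to a stopping time $\tau_R$ (where $\|\sqrt{\rho}u\|_{L^2}^2\le R$) and then pass to the limit by monotone convergence; also, the quadratic-variation integrand in your BDG step should read $\|\sqrt{\rho}\,u\|_{L^2}^2$, not $\|\rho u\|_{L^2}^2$.
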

\begin{proof}
Denote $\Phi(\rho,m)=(m, \mathcal{M}^{-1}(\rho)m)$, we obtain
$$\nabla_m\Phi(\rho,m)=2\mathcal{M}^{-1}(\rho)m,\nabla_m^2\Phi(\rho,m)=2\mathcal{M}^{-1}(\rho),
\nabla_\rho\Phi(\rho,m)=-(m,\mathcal{M}^{-1}(\rho)\mathcal{M}^{-1}(\rho)m ).$$
Applying the It\^{o} formula to function $\Phi(\rho, \rho u)$, integrating with respect to time, taking the supremum on interval $[0, t\wedge\tau_K]$, then
\begin{align}\label{3.9}
   &\sup_{s\in [t\wedge\tau_K]}\int_{\mathcal{D}}\rho|u|^2dx\leq \!\int_{\mathcal{D}}\rho_0|u_0|^2dx-\int_{0}^{t\wedge\tau_K}\int_{\mathcal{D}}\nabla|u|^{2}\cdot \rho udx ds+\!\epsilon\int_{0}^{t\wedge\tau_K}\int_{\mathcal{D}}\nabla |u|^{2}\cdot \nabla \rho dxds\nonumber\\&-2\int_{0}^{t\wedge\tau_K}\int_{\mathcal{D}}\nabla(\rho^\gamma+\delta\rho^\gamma)\cdot udxds
    +2\int_{0}^{t\wedge\tau_K}\int_{\mathcal{D}}(\mu_1\triangle u+(\mu_1+\mu_2) \nabla{\rm div}u)\cdot udxds\nonumber\\&+2\int_{0}^{t\wedge\tau_K}\int_{\mathcal{D}}\rho u\otimes u:\nabla udxds-2\epsilon\int_{0}^{t\wedge\tau_K}\int_{\mathcal{D}}\nabla u\nabla\rho\cdot udxds\nonumber\\&-2\int_{0}^{t\wedge\tau_K}\int_{\mathcal{D}}(\nabla Q\odot \nabla Q-{\rm F}(Q){\rm I}_3):\nabla udxds\nonumber\\
&+2\int_{0}^{t\wedge\tau_K}\int_{\mathcal{D}}(Q\triangle Q-\triangle Q Q):\nabla udxds
+2\int_{0}^{t\wedge\tau_K}\int_{\mathcal{D}}\sigma^* (c^2 Q):\nabla udxds\nonumber\\
&+\int_{0}^{t\wedge\tau_K}\sum_{k\geq 1}(\mathcal{M}^{-1}(\rho)\mathcal{M}^{\frac{1}{2}}(\rho)P_n(\sqrt{\rho}f(\rho,\rho u, c, Q)e_k),\mathcal{M}^{\frac{1}{2}}(\rho)P_n(\sqrt{\rho}f(\rho,\rho u, c, Q)e_k))ds\nonumber\\
&+2\sup_{s\in [0, t\wedge\tau_K]}\left|\int_{0}^{s}\sum_{k\geq 1}\int_{\mathcal{D}}\mathcal{M}^{\frac{1}{2}}(\rho)u P_n(\sqrt{\rho}f(\rho,\rho u, c, Q)e_k)dx d\beta_k\right|.
\end{align}
By equation $(\ref{Equ3.1})_2$, we have
\begin{align*}
  -2\int_{\mathcal{D}}\nabla(\rho^\gamma+\delta \rho^\beta)\cdot udx
   &=-2\int_{\mathcal{D}}\left(\frac{\gamma}{\gamma-1}\nabla\rho^{\gamma-1}+\frac{\delta\beta}{\beta-1}\nabla\rho^{\beta-1}\right)\rho u dx\\
  &=2\int_{\mathcal{D}}\left(\frac{\gamma}{\gamma-1}\rho^{\gamma-1}+\frac{\delta\beta}{\beta-1}\rho^{\beta-1}\right){\rm div}(\rho u) dx\\
  &=2\int_{\mathcal{D}}\left(\frac{\gamma}{\gamma-1}\rho^{\gamma-1}+\frac{\delta\beta}{\beta-1}\rho^{\beta-1}\right)(-\partial_t\rho +\epsilon \triangle\rho) dx\\
  &=-2\int_{\mathcal{D}}\frac{1}{\gamma-1}d\rho^{\gamma}+\frac{\delta}{\beta-1}d\rho^{\beta}dx\\
  &\quad-2\epsilon\int_{\mathcal{D}}(\gamma \rho^{\gamma-2}+\delta \beta\rho^{\beta-2})|\nabla \rho|^2dxdt.
\end{align*}
Multiplying equation $(\ref{Equ3.1})_1$ with c, integrating over $\mathcal{D}$, then
\begin{eqnarray}\label{3.10}
    d\left(\frac{1}{2}\int_{\mathcal{D}}c^2dx\right)+\int_{\mathcal{D}}|\nabla c|^2dxdt
    =-\int_{\mathcal{D}}cu\cdot\nabla cdxdt.
\end{eqnarray}
Also, multiplying equation $\eqref{Equ3.1}_4$ with $-(\triangle Q-Q-c_*Q{\rm tr}(Q^2))$, taking the trace and integrating over $\mathcal{D}$, adding (\ref{3.9}) and (\ref{3.10}), then we get
\begin{align}\label{3.11}
&\sup_{s\in [t\wedge\tau_K]}\int_{\mathcal{D}}\left(\rho|u|^2+c^2+\frac{2\delta}{\beta-1}\rho^{\beta}+\frac{2}{\gamma-1}\rho^\gamma
+|Q, \nabla Q|^2+\frac{c_*}{2}|Q|^4\right)dx\nonumber\\
&\quad+2\int_{0}^{t\wedge\tau_K}\int_{\mathcal{D}}\mu_1|\nabla u|^2+(\mu_1+\mu_2)|{\rm div} u|^2
+|\nabla c|^2\nonumber\\&\quad\qquad\qquad+\Gamma(|\nabla Q|^2+|\triangle Q|^2)
+\Gamma(c_*|Q|^4+c_*^2|Q|^6)dxds\nonumber\\
&\quad+2\epsilon\int_{0}^{t\wedge\tau_K}\int_{\mathcal{D}}(\gamma \rho^{\gamma-2}+\delta \beta\rho^{\beta-2})|\nabla \rho|^2dxds\nonumber\\
&=\int_{\mathcal{D}}\left(\rho_0|u_0|^2+c_0^2+\frac{2\delta}{\beta-1}\rho_0^{\beta}+\frac{2}{\gamma-1}\rho_0^\gamma
+|Q_0, \nabla Q_0|^2+\frac{c_*}{2}|Q_0|^4\right)dx\nonumber\\
&\quad-\int_{0}^{t\wedge\tau_K}\int_{\mathcal{D}}\nabla|u|^{2}\cdot \rho udxds+\epsilon\int_{0}^{t\wedge\tau_K}\int_{\mathcal{D}}\nabla |u|^{2}\cdot \nabla \rho dxds+\!2\int_{0}^{t\wedge\tau_K}\int_{\mathcal{D}}\rho u\otimes u:\nabla udxds\nonumber\\
&\quad-2\epsilon\int_{0}^{t\wedge\tau_K}\int_{\mathcal{D}}\nabla u\nabla\rho\cdot udxds+2\int_{0}^{t\wedge\tau_K}\int_{\mathcal{D}}(\nabla Q\odot \nabla Q-{\rm F}(Q){\rm I}_3):\nabla udxds\nonumber\\
&\quad-2\int_{0}^{t\wedge\tau_K}\int_{\mathcal{D}}(Q\triangle Q-\triangle Q Q):\nabla udxds-2\int_{0}^{t\wedge\tau_K}\int_{\mathcal{D}}\sigma^* (c^2 Q):\nabla udxds\nonumber\\& \quad-2\int_{0}^{t\wedge\tau_K}\int_{\mathcal{D}}cu\cdot\nabla cdxds
+2\int_{0}^{t\wedge\tau_K}\int_{\mathcal{D}}u\cdot\nabla Q:(\triangle Q-Q-c_*Q{\rm tr}(Q^2))dxds\nonumber\\&\quad-2\int_{0}^{t\wedge\tau_K}\int_{\mathcal{D}}(\Psi Q-Q\Psi):(\triangle Q-Q-c_*Q{\rm tr}(Q^2))dxds\nonumber\\
&\quad+\int_{0}^{t\wedge\tau_K}\int_{\mathcal{D}}\Gamma(c-c_*) Q:(\triangle Q-Q-c_*Q{\rm tr}(Q^2))dxds\nonumber\\
&\quad-2\int_{0}^{t\wedge\tau_K}\int_{\mathcal{D}}b\Gamma Q^2:(\triangle Q-Q-c_*Q{\rm tr}(Q^2))dxds+\int_{0}^{t\wedge\tau_K}\int_{\mathcal{D}}2c_*\Gamma Q|Q|^2:\triangle Qdxds\nonumber\\
&\quad+\int_{0}^{t\wedge\tau_K}\sum_{k\geq 1}(\mathcal{M}^{-1}(\rho)\mathcal{M}^{\frac{1}{2}}(\rho)P_n(\sqrt{\rho}f(\rho,\rho u, c, Q)e_k),\mathcal{M}^{\frac{1}{2}}(\rho)P_n(\sqrt{\rho}f(\rho,\rho u, c, Q)e_k))ds\nonumber\\
&\quad+2\sup_{s\in [0, t\wedge\tau_K]}\left|\int_{0}^{s}\sum_{k\geq 1}\int_{\mathcal{D}}\mathcal{M}^{\frac{1}{2}}(\rho)u P_n(\sqrt{\rho}f(\rho,\rho u, c, Q)e_k)dx d\beta_k\right|\nonumber\\
&=:\int_{0}^{t\wedge\tau_K}\sum_{i=1}^{14}J_ids+2\sup_{s\in [0, t\wedge\tau_K]}\left|\int_{0}^{s}\sum_{k\geq 1}J_{15}d\beta_k\right|.
\end{align}

Next, we control all the right hand side terms of (\ref{3.11}). Note that $J_1+J_3=0, J_2+J_4=0$ and
\begin{eqnarray*}
&&J_6+J_{10}=-2\int_{\mathcal{D}}(Q\triangle Q-\triangle Q Q):\nabla udx\nonumber\\
 &&\qquad\qquad\quad+2\int_{\mathcal{D}}(\Psi Q-Q\Psi):(\triangle Q-Q-c_*Q{\rm tr}(Q^2))dx\\
&&\qquad\qquad=-2\int_{\mathcal{D}}{\rm tr}(Q\triangle Q\nabla u-\triangle Q Q\nabla u)dx
 +2\int_{\mathcal{D}}{\rm tr}(\Psi Q\triangle Q-Q\Psi\triangle Q)dx\\
&&\qquad\qquad\quad+2\int_{\mathcal{D}}{\rm tr}((\Psi Q-Q\Psi)(-Q-c_*Q{\rm tr}(Q^2)))dx=\mathcal{J}_1+\mathcal{J}_2+\mathcal{J}_3.
\end{eqnarray*}
Due to the fact that $Q$ is symmetric and traceless and $\Psi$ is skew-symmetric we have $\mathcal{J}_3=0, \mathcal{J}_1+\mathcal{J}_2=0$, also $J_5+J_9=0$, see \cite{14}.

Applying Young's inequality and the boundedness of $c$,  we have
\begin{eqnarray*}
  &&|J_7|=\left|2\int_{\mathcal{D}}\sigma^* c^2 Q:\nabla udx\right|\leq C\|c\|^2_{L^\infty([0,T]\times \mathcal{D})}\|\nabla u\|_{L^2}\|Q\|_{L^2}\leq \|\nabla u\|_{L^2}^2+C\|Q\|_{L^2}^2, \\
 && |J_8|=\left|2\int_{\mathcal{D}}cu\cdot\nabla cdx\right|\leq C\|c\|_{L^\infty([0,T]\times \mathcal{D})}\|\nabla c\|_{L^2}\|u\|_{L^2}\leq \|\nabla c\|_{L^2}^2+C\|u\|_{L^2}^2\\
  &&\qquad\quad\leq \|\nabla c\|_{L^2}^2+\mu_1\|\nabla u\|_{L^2}^2+C.
\end{eqnarray*}
In addition
 \begin{align*}
  |J_{11}|&=\left|\int_{\mathcal{D}}\Gamma(c-c_*) Q:(\triangle Q-Q-c_*Q{\rm tr}(Q^2))dx\right|\\ &\quad\leq \frac{\Gamma}{2}\|\triangle Q\|_{L^2}^2
  +C\|Q\|_{L^2}^2+C\|Q\|_{L^4}^4,\\
  |J_{12}|&=\left|\int_{\mathcal{D}}2b\Gamma Q^2:(\triangle Q-Q-c_*Q{\rm tr}(Q^2))dx\right|\\
  &\quad\leq b\Gamma\|\triangle Q\|_{L^2}\|Q^2\|_{L^2}+\frac{c_*\Gamma}{2}\|Q\|_{L^6}^6+C\|Q\|_{L^4}^4\\
  &\quad\leq \frac{\Gamma}{2}\|\triangle Q\|^2+C\|Q\|_{L^4}^4+\frac{c_*\Gamma}{2}\|Q\|_{L^6}^6
  +C\|Q\|^2,\\
  J_{13}&=\int_{\mathcal{D}}2c_*\Gamma Q|Q|^2:\triangle Qdx=-2c_*\Gamma\int_{\mathcal{D}}|\nabla Q|^2|Q|^2dx-c_*\Gamma\int_{\mathcal{D}}|\nabla{\rm tr}(Q)|^2dx\leq 0.
\end{align*}
Using the condition (\ref{2.1}), $J_{14}$ can be treated as
\begin{align*}
J_{14}&\leq\|\sqrt{\rho}f(\rho,\rho u, c, Q)\|_{L_{2}(\mathcal{H};L^2(\mathcal{D}))}^{2}\nonumber\\&\leq \int_{\mathcal{D}}\sum_{k\geq 1}|\sqrt{\rho} f(\rho,\rho u, c, Q)e_{k}|^2dx\nonumber\\
&\leq C\int_{\mathcal{D}}\rho^\gamma+|\sqrt{\rho}u|^2+\rho |c|^\frac{2(\gamma-1)}{\gamma}+\rho|\nabla Q|^\frac{2(\gamma-1)}{\gamma}dx\nonumber\\
&\leq C\int_{\mathcal{D}}\rho^\gamma+|\sqrt{\rho}u|^2+ |c|^2+|\nabla Q|^2dx.
\end{align*}
Define the stopping time $\tau_R$
\begin{eqnarray}
\tau_R=\inf\left\{t\geq 0; \sup_{s\in [0,t]}\|\sqrt{\rho }u\|_{L^2}^2\geq R\right\}\wedge \tau_K,
\end{eqnarray}
if the set is empty, taking $\tau_{R}=T$. Note that, $\tau_{R}$ is an increasing sequence with $\lim_{R\wedge K\rightarrow \infty}\tau_{R}= T$. Regarding the stochastic term, by the Burkholder-Davis-Gundy inequality and condition (\ref{2.1}) for all $1\leq p<\infty$
\begin{eqnarray*}
  &&\mathbb{E}\left[\sup_{s\in[0,t\wedge \tau_R]}\left|\int_{0}^{s}\sum_{k\geq 1}J_{15}d\beta_k\right|\right]^p \\
  &&\leq  C\mathbb{E}\left(\int_{0}^{t\wedge \tau_R}\sum_{k\geq 1}\left(\int_{\mathcal{D}}\mathcal{M}^{\frac{1}{2}}(\rho)uP_n(\sqrt{\rho} f(\rho,\rho u, c, Q)e_k) dx\right)^2ds\right)^\frac{p}{2}\\
  &&\leq C\mathbb{E}\left(\int_{0}^{t\wedge \tau_R}\|\mathcal{M}^{\frac{1}{2}}(\rho)u\|_{L^2}^2\int_{\mathcal{D}}\sum_{k\geq 1}|\rho f^2(\rho,\rho u, c, Q)e_k^2|dxds\right)^\frac{p}{2}\\
  &&\leq C\mathbb{E}\left[\sup_{s\in [0,t\wedge \tau_R ]}\|\sqrt{\rho}u\|_{L^2}^p\right]\left(\int_{0}^{t\wedge \tau_R}\int_{\mathcal{D}}\rho^\gamma+|\sqrt{\rho}u|^2+\rho |c|^\frac{2(\gamma-1)}{\gamma}+\rho|\nabla Q|^\frac{2(\gamma-1)}{\gamma}dxds\right)^\frac{p}{2}\\
  &&\leq \mathbb{E}\left[\sup_{s\in [0,t\wedge \tau_R ]}\|\sqrt{\rho}u\|_{L^2}^{2p}\right]+C\mathbb{E}\left(\int_{0}^{t\wedge \tau_R}\int_{\mathcal{D}}\rho^\gamma+|\sqrt{\rho}u|^2+|c|^2+|\nabla Q|^2dxds\right)^p.
\end{eqnarray*}
Considering all these estimates, taking the integral with respect to time, taking the supremum on interval $[0, t\wedge \tau_R]$, then power $p$ and taking expectation on both sides, the Gronwall lemma yields
\begin{eqnarray}
&&\mathbb{E}\left[\sup_{s\in [0,t\wedge \tau_R]}\left(\|c, Q, \nabla Q, \sqrt{\rho}u\|_{L^2}^2+\|Q\|_{L^4}^4+\frac{1}{\gamma-1}\|\rho\|_{L^\gamma}^\gamma+\frac{\delta}{\beta-1}\|\rho\|_{L^\beta}^\beta\right)\right]^p\nonumber\\
&&+\mathbb{E}\left(\int_0^{t\wedge \tau_R}\mu_1\|\nabla u\|_{L^2}^2+(\mu_1+\mu_2)\|{\rm div}u\|_{L^2}^2+\Gamma\|\triangle Q\|_{L^2}^2+\|\nabla c\|_{L^2}^2+c_*^2\Gamma\|Q\|_{L^6}^6ds\right)^p\nonumber\\
&&+\mathbb{E}\left(\int_0^{t\wedge \tau_R}\int_{\mathcal{D}}\epsilon(\gamma \rho^{\gamma-2}+\delta \beta\rho^{\beta-2})|\nabla \rho|^2dxds\right)^p\leq C,
\end{eqnarray}
where $C$ is constant independent of $n$. Finally, we get the bound (\ref{3.8}) by the monotone convergence theorem.

Using the bound (\ref{3.8}), we have
\begin{eqnarray*}
&&\mathbb{E}\int_{\mathcal{D}}|\rho|^{2}dx+2\epsilon\mathbb{E}\int_{0}^{t}\int_{\mathcal{D}}|\nabla \rho|^{2}dxds=\mathbb{E}\int_{\mathcal{D}}|\rho_{0}|^{2}dx-\mathbb{E}\int_{0}^{t}\int_{\mathcal{D}}{\rm div} u|\rho|^{2}dxds\\ &&\leq
\mathbb{E}\int_{\mathcal{D}}|\rho_{0}|^{2}dx+\mathbb{E}\left(\int_{0}^{t}\int_{\mathcal{D}}|{\rm div} u|^{2}+|\rho|^{4}dxds\right)^{4}\leq C,
\end{eqnarray*}
consequently, (\ref{3.8*}) holds. We can obtain the bound (\ref{3.9*}) using the fact that $\sqrt{\rho}u\in L^p(\Omega; L^\infty(0,T; L^2(\mathcal{D})))$ and $\rho\in L^p(\Omega; L^\infty(0,T; L^\beta(\mathcal{D})))$. This completes the proof.
\end{proof}
{\bf Remark}. By taking inner product with $-(\triangle Q-Q-c_*Q{\rm tr}(Q^2))$ in equation $(\ref{Equ3.1})_4$ in place of ${\rm H}(Q,c)$, we can prevent the interaction term of $c$ and $Q$-tensor arising, making the estimates concise.

\subsection{The compactness of approximate solution}

Unlike the deterministic case, it may not be the case that the embedding $L^{2}(\Omega; X)$ into $L^{2}(\Omega; Y)$ is compact, even if $X\hookrightarrow Y$ is compact. Therefore, in order to obtain the compactness of approximate solution, the key point is to obtain the compactness of the set of probability measures generated by the approximate solution sequences. Define the path space
\begin{eqnarray*}
\mathcal{X}=\mathcal{X}_{u}\times \mathcal{X}_{\rho}\times \mathcal{X}_{\rho u}\times   \mathcal{X}_{c}\times\mathcal{X}_{Q}\times\mathcal{X}_{\mathcal{W}},
\end{eqnarray*}
where
\begin{eqnarray*}
&&\mathcal{X}_{u}:=L_{w}^{2}(0,T;H^{1}(\mathcal{D})),\mathcal{X}_{\rho u}:=C([0,T];H^{-1}(\mathcal{D})),\\
&&\mathcal{X}_{\rho}:=L^\infty(0,T; H^{-\frac{1}{2}}(\mathcal{D}))\cap L^{2}(0,T; L^{2}(\mathcal{D}))\cap L_{w}^{2}(0,T;H^{1}(\mathcal{D})),\\
&& \mathcal{X}_{c}:= L_{w}^{2}(0,T;H^{1}(\mathcal{D}))\cap L^2(0,T; L^2(\mathcal{D})),\\
&& \mathcal{X}_{Q}:=L_{w}^{2}(0,T;H^2(\mathcal{D}))\cap L^2(0,T; H^1(\mathcal{D})), ~\mathcal{X}_{\mathcal{W}}:=C([0,T];\mathcal{H}_{0}).
\end{eqnarray*}
Define the probability measures
\begin{eqnarray}\label{3.17}
\nu^{n}=\nu^{n}_{u}\otimes \nu^{n}_{\rho}\otimes \nu^{n}_{\rho u}\otimes \nu^{n}_{c}\otimes \nu^{n}_{Q}\otimes\nu_{\mathcal{W}},
\end{eqnarray}
where $\nu^n_{(\cdot)}(B)=\mathbb{P}\{\cdot\in B\}$ for any $B\in \mathcal{B}(\mathcal{X}_{(\cdot)})$, $\mathcal{X}_{(\cdot)}$ is the path space defined above, respectively.

Next, we establish the following compactness result.
\begin{proposition}\label{pro3.1} There exists a subsequence of probability measures $\{\nu^{n}\}_{n\geq 1}$ still denoted by $\{\nu^{n}\}_{n\geq 1}$, a probability space $(\widetilde{\Omega},\widetilde{\mathcal{F}},\widetilde{\mathbb{P}})$ with $\mathcal{X}$-valued measurable random variables
\begin{eqnarray*}
(\tilde{u}_{n},\tilde{\rho}_{n},P_n(\tilde{\rho}_{n}\tilde{u}_{n}), \tilde{c}_n, \widetilde{Q}_n, \widetilde{\mathcal{W}}_{n}) ~{\rm and} ~(\tilde{u},\tilde{\rho}, \tilde{\rho}\tilde{u}, \tilde{c}, \widetilde{Q}, \widetilde{\mathcal{W}}),
\end{eqnarray*}
such that
\begin{eqnarray}&&(\tilde{u}_{n},\tilde{\rho}_{n}, P_n(\tilde{\rho}_{n}\tilde{u}_{n}), \tilde{c}_n, \widetilde{Q}_n, \widetilde{\mathcal{W}}_{n})
\rightarrow(\tilde{u},\tilde{\rho},\tilde{\rho}\tilde{u}, \tilde{c}, \widetilde{Q}, \widetilde{\mathcal{W}}), ~\widetilde{\mathbb{P}}~\mbox{a.s.} \label{3.18}
\end{eqnarray}
in the topology of $\mathcal{X}$ and
\begin{eqnarray}
&&\widetilde{\mathbb{P}}\{(\tilde{u}_{n},\tilde{\rho}_{n},P_n(\tilde{\rho}_{n}\tilde{u}_{n}), \tilde{c}_n, \widetilde{Q}_n, \widetilde{\mathcal{W}}_{n})\in \cdot\}=\nu^{n}(\cdot),\label{3.19}\\
&&\widetilde{\mathbb{P}}\{(\tilde{u},\tilde{\rho}, \tilde{\rho}\tilde{u}, \tilde{c}, \widetilde{Q}, \widetilde{\mathcal{W}})\in \cdot\}=\nu(\cdot),\label{3.20}
\end{eqnarray}
where $\nu$ is a Radon measure and $\widetilde{\mathcal{\mathcal{W}}}_{n}$ is cylindrical Wiener process, relative to the filtration $\widetilde{\mathcal{F}}_{t}^{n}$ generated by the completion of $\sigma(\tilde{u}_{n}(s),\tilde{\rho}_{n}(s), \tilde{c}_n(s), \widetilde{Q}_n(s), \widetilde{\mathcal{W}}_{n}(s);s\leq t)$. Moreover, the process $(\tilde{u}_n,\tilde{\rho}_n, \tilde{\rho}_n\tilde{u}_n, \tilde{c}_n, \widetilde{Q}_n, \widetilde{\mathcal{W}}_n)$ also satisfies the system (\ref{Equ3.1}) and shares the following uniform a priori estimates
\begin{eqnarray}
&&\tilde{\rho}_n\in L^{p}(\widetilde{\Omega}; L^{\infty}(0,T;L^{\beta}(\mathcal{D}))\cap L^{2}(0,T;H^1(\mathcal{D}))),\label{3.21}\\
&&\tilde{u}_n\in  L^{p}(\widetilde{\Omega}; L^{2}(0,T;H^{1}(\mathcal{D}))), \label{3.22}\\
&& \sqrt{\tilde{\rho}_n} \tilde{u}_n\in L^{p}(\widetilde{\Omega}; L^{\infty}(0,T;L^{2}(\mathcal{D}))),\label{3.23}\\
&&\tilde{\rho}_n \tilde{u}_n\in L^{p}(\widetilde{\Omega}; L^{\infty}(0,T;L^{\frac{2\beta}{\beta+1}}(\mathcal{D}))),\label{3.24}\\
&& \tilde{c}_n \in L^{p}(\widetilde{\Omega}; L^{\infty}(0,T;L^2(\mathcal{D}))\cap L^2(0,T;H^1(\mathcal{D}))),\label{3.25}\\
&& \widetilde{Q}_n \in L^{p}(\widetilde{\Omega}; L^{\infty}(0,T;H^1(\mathcal{D}))\cap L^2(0,T;H^2(\mathcal{D}))).\label{3.26}
\end{eqnarray}
\end{proposition}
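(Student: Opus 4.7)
The proof has the standard three-part structure: (i) tightness of the family of laws $\{\nu^n\}$ on the quasi-Polish path space $\mathcal{X}$, (ii) extraction of a new probability basis and almost surely convergent copies via the Jakubowski-Skorokhod theorem, and (iii) identification of the limit together with transfer of the uniform bounds. Throughout, $C$ denotes a constant independent of $n$ and $K$, and every estimate rests on Lemma \ref{lem3.1}.

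First I would verify tightness of each marginal. For $\nu^n_u$, $\nu_\mathcal{W}$, and for the weak-$L^2$ factors of $\nu^n_\rho$, $\nu^n_c$, $\nu^n_Q$, tightness is immediate from Chebyshev applied to the strong-norm bounds of Lemma \ref{lem3.1}, since closed balls in the respective strong spaces are weakly compact. For the strong-topology factors of $\nu^n_\rho$ in $L^\infty(0,T;H^{-1/2})\cap L^2(0,T;L^2)$, of $\nu^n_c$ in $L^2(0,T;L^2)$ and of $\nu^n_Q$ in $L^2(0,T;H^1)$, I would pair the spatial a priori bounds with a uniform fractional time-derivative estimate extracted term by term from $(\ref{Equ3.1})_{1,2,4}$; Aubin-Lions then supplies the required compact embedding, and Chebyshev gives tightness. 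The bound (\ref{3.8*}) ensures that the density indeed sits in the strong topology of $L^2(0,T;L^2)$ along the approximation.

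The delicate piece is tightness of $\nu^n_{\rho u}$ in $C([0,T]; H^{-1}(\mathcal{D}))$. Using the integral identity (\ref{3.15}) I would write $P_n(\rho_n u_n)(t)$ as the sum of an initial datum, a drift integral, and a stochastic integral against $\mathcal{W}$. The drift is uniformly controlled in $L^p(\widetilde{\Omega}; W^{1,q}(0,T; H^{-1}))$ for some $q>1$ using the pressure bound in $L^\infty(0,T;L^\beta)$, the convection bound via $\sqrt{\rho}u \in L^\infty(0,T;L^2)$ with $\rho\in L^\infty(0,T;L^\beta)$, the elastic $Q$-stresses controlled by $\widetilde Q\in L^\infty(0,T;H^1)\cap L^2(0,T;H^2)$ with Sobolev embedding, and the active stress $c^2Q$ bounded via the maximum principle for $c$. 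For the stochastic integral, Burkholder-Davis-Gundy in $H^{-1}$ together with the growth condition (\ref{2.1*}) and the density bounds yields a uniform $L^p(\widetilde{\Omega}; C^{\alpha}([0,T]; H^{-1}))$ estimate for every $\alpha<1/2$. Since $C^\alpha\hookrightarrow\hookrightarrow C$ in time, Chebyshev supplies the desired tightness. I expect this stochastic piece to be the main technical obstacle, because one must reconcile the Hilbert-Schmidt setting of BDG in $H^{-1}$ with the projection $P_n$ and with condition (\ref{2.1*}) uniformly in $n$.

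With tightness in hand I would invoke the Jakubowski-Skorokhod theorem (the classical Skorokhod theorem fails because $\mathcal{X}$ carries weak topologies) to obtain the new probability basis, new variables with prescribed laws (\ref{3.19}), and the a.s.\ convergence (\ref{3.18}) along a subsequence. The uniform bounds (\ref{3.21})-(\ref{3.26}) transfer from the original to the copies by equality of laws and lower semicontinuity under weak/a.s.\ limits (Fatou). To show that $\widetilde{\mathcal{W}}_n$ is an $\widetilde{\mathcal{F}}^n_t$-cylindrical Wiener process on the new basis, I would use the Lévy characterization: the Gaussian-martingale structure of $\mathcal{W}$ relative to its natural filtration is preserved under equality of laws. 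Finally, to see that the new tuple satisfies system (\ref{Equ3.1}), I would encode each weak form of (\ref{3.15}) as a real-valued functional of the trajectory and the Wiener path, transport the pathwise identity from $\mathbb{P}$ to $\widetilde{\mathbb{P}}$ by equality of laws, and identify the limit product $\overline{\rho u}=\tilde\rho\tilde u$ using strong convergence of $\tilde\rho_n$ in $L^2(0,T;L^2)$ against weak convergence of $\tilde u_n$ in $L^2(0,T;H^1)$.
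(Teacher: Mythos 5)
Your three-part plan (tightness, Jakubowski--Skorokhod, identification) mirrors the paper's proof, and most marginal tightness claims are handled as in Lemma \ref{lem3.4}. There is, however, a genuine gap in the treatment of $\nu^n_{\rho u}$: in your inventory of the drift terms you never account for the artificial-viscosity contribution $\epsilon\,\nabla\rho_n\cdot\nabla u_n$, and this is precisely the term that does \emph{not} fall into $W^{1,q}(0,T;H^{-k})$ for any $q>1$ from the a priori estimates alone. From Lemma \ref{lem3.1} one only has $\sqrt{\epsilon}\,\nabla\rho_n\in L^2(0,T;L^2)$ and $\nabla u_n\in L^2(0,T;L^2)$, which places $\epsilon\nabla\rho_n\cdot\nabla u_n$ in $L^1(0,T;L^1)$ only; that yields a $W^{1,1}$ time bound, too weak to feed the Aubin--Lions embedding. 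The paper closes this by first improving the time integrability of the density --- interpolating $\rho_n u_n$ between $L^2(0,T;L^{6\beta/(\beta+6)})$ and $L^\infty(0,T;L^{2\beta/(\beta+1)})$ (Lemma \ref{lem6.2}) to obtain $\rho_n\in L^p(\Omega;L^q(0,T;H^1))$ for some $q>2$, see (\ref{3.31*}) --- and only then deriving a H\"older-in-time bound (\ref{3.29}). Without this interpolation step your $W^{1,q}$, $q>1$, claim is unsupported for that term.

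A secondary point: the BDG estimate for the stochastic integral is most naturally obtained in $C^\alpha([0,T];H^{-k})$ with $k\geq\frac{5}{2}$, since the natural bound coming from condition (\ref{2.1*}) controls $\sum_k\|\,\cdot\,\|_{L^1}^2$ and one then uses $L^1(\mathcal{D})\hookrightarrow H^{-k}(\mathcal{D})$; claiming the bound directly in $H^{-1}$ would require a uniform-in-$n$ Hilbert--Schmidt bound in $L_2(\mathcal{H};H^{-1})$ that is not immediate from the hypothesis. This is not fatal --- one recovers compactness in $C([0,T];H^{-1})$ from the combination of the $C^\alpha([0,T];H^{-k})$ bound and the $L^\infty(0,T;L^{2\beta/(\beta+1)})$ bound via Aubin--Lions --- but as stated the step is imprecise. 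Your identification of $\tilde q=\tilde\rho\tilde u$ via strong convergence of $\tilde\rho_n$ paired with weak convergence of $\tilde u_n$ is fine and equivalent in effect to the paper's use of $\tilde\rho_n\to\tilde\rho$ in $L^\infty(0,T;H^{-1/2})$.
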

Combining the bound (\ref{3.21}) and strong convergence $\tilde{\rho}_n$ in $L^2(0,T; L^2(\mathcal{D}))$, the Vitali convergence theorem \ref{thm6.1} implies that $\widetilde{\mathbb{P}}$ a.s.
\begin{eqnarray*}
\tilde{\rho}_n\rightarrow\tilde{\rho}~{\rm in} ~L^4(0,T; L^4(\mathcal{D})).
\end{eqnarray*}

In order to employ the Skorokhod-Jakubowski theorem, we next show the tightness of set $\{\nu^n\}_{n\geq 1}$.
\begin{lemma} \label{lem3.4} The set of probability measures $\{\nu^{n}\}_{n\geq 1}$ is tight on path space $\mathcal{X}$.
\end{lemma}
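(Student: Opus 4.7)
The strategy is to prove tightness of each marginal $\nu^n_{(\cdot)}$ separately and conclude by the Tychonoff-type property for tightness on product spaces. The law $\nu_{\mathcal{W}}$ is trivially tight since it is a single Borel probability measure on the Polish space $C([0,T];\mathcal{H}_0)$. For the marginals $\nu^n_u, \nu^n_c, \nu^n_Q$ and the weak-topology components of $\nu^n_\rho$, the approach is Banach--Alaoglu combined with Chebyshev: the a priori bounds (\ref{3.8}) yield
\[
\mathbb{E}\|u_n\|_{L^2(0,T;H^1)}^2 + \mathbb{E}\|c_n\|_{L^2(0,T;H^1)}^2 + \mathbb{E}\|Q_n\|_{L^2(0,T;H^2)}^2 + \mathbb{E}\|\rho_n\|_{L^2(0,T;H^1)}^2 \leq C,
\]
and balls in reflexive spaces are weakly (sequentially) compact and metrizable, hence yield compact sets in the corresponding $L^2_w$ topologies; a Chebyshev argument then gives tightness of each such marginal.

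The nontrivial components are the strong-topology pieces of $\mathcal{X}_\rho$ (namely $L^2(0,T;L^2)$), of $\mathcal{X}_c$ (namely $L^2(0,T;L^2)$), of $\mathcal{X}_Q$ (namely $L^2(0,T;H^1)$), and the space $\mathcal{X}_{\rho u}=C([0,T];H^{-1})$. For the first three, I would use Aubin--Lions: combining the $L^2_t$ spatial bounds from (\ref{3.8}) with time-derivative bounds extracted directly from the evolution equations $(\ref{Equ3.1})_1,(\ref{Equ3.1})_2,(\ref{Equ3.1})_4$. For example, using the continuity equation with artificial viscosity, $\partial_t\rho_n = -\mathrm{div}(\rho_n u_n) + \epsilon\Delta\rho_n$ lies in $L^2(0,T; H^{-1})$ uniformly in $n$, and the embedding $H^1\hookrightarrow\hookrightarrow L^2\hookrightarrow H^{-1}$ together with the Aubin--Lions lemma provides compact subsets of $L^2(0,T;L^2)$. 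Analogous arguments work for $c_n$ and $Q_n$ using $\partial_t c_n\in L^2(0,T;H^{-1})$ (pointwise $\omega$) and $\partial_t Q_n\in L^2(0,T;L^2)$, again yielding relatively compact sets after applying Chebyshev to the bounds on $\partial_t$. The $L^\infty(0,T;H^{-1/2})$ component of $\mathcal{X}_\rho$ is controlled by interpolation from $L^\infty(0,T;L^\beta)$ (bound (\ref{3.8})) into $L^\infty(0,T;H^{-1/2})$, plus an equicontinuity estimate of the same type used below for $\rho_n u_n$.

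The genuinely delicate marginal is $\nu^n_{\rho u}$ on $C([0,T];H^{-1}(\mathcal{D}))$, because the momentum equation contains the stochastic integral and hence only H\"older, not Lipschitz, time regularity is available. The plan here is to invoke a compactness criterion of Aldous/Arzel\`a--Ascoli type: one shows that for some $\kappa>0$ and $l>\tfrac52$,
\[
\mathbb{E}\|\rho_n u_n\|_{C^\kappa([0,T];H^{-l}(\mathcal{D}))} \leq C,
\qquad
\mathbb{E}\|\rho_n u_n\|_{L^\infty(0,T;L^{\frac{2\beta}{\beta+1}}(\mathcal{D}))} \leq C,
\]
where the second bound is exactly (\ref{3.9*}). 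The first is obtained by splitting $\rho_n u_n(t)-\rho_n u_n(s)$ into the deterministic drift terms, controlled in $L^\infty_t H^{-l}$ using the a priori estimates on $\nabla u_n$, $\rho_n^\gamma$, $\rho_n^\beta$, $c_n^2 Q_n$, $\nabla Q\odot\nabla Q$, $Q\triangle Q$, and the stochastic part, controlled in $C^\kappa_t L^2$ via the Burkholder--Davis--Gundy inequality together with condition (\ref{2.1}) and Kolmogorov's continuity theorem. The combination of a uniform H\"older bound in a rough space with the $L^{\frac{2\beta}{\beta+1}}$ bound and the compact embedding $L^{\frac{2\beta}{\beta+1}}\hookrightarrow\hookrightarrow H^{-1}$ yields, by a standard interpolation argument, equicontinuity in $H^{-1}$, hence relative compactness in $C([0,T];H^{-1})$, and Chebyshev closes the tightness argument. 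The main obstacle is precisely this last step: extracting sufficient time regularity for the momentum in the presence of the stochastic term while only having negative-order spatial integrability, so the BDG estimate and the choice of $l$ and the interpolation index must be calibrated carefully.
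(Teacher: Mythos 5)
Your overall strategy matches the paper's: tightness of each marginal separately, Banach--Alaoglu plus Chebyshev for the $L^2_w$ components, Aubin--Lions with a time-derivative bound for the strong $L^2$ components of $\mathcal{X}_\rho,\mathcal{X}_c,\mathcal{X}_Q$, and a uniform $C^\alpha([0,T];H^{-k})$ H\"older estimate together with the $L^\infty_t L^{2\beta/(\beta+1)}$ bound and Aubin--Lions for $\mathcal{X}_{\rho u}$.

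However, there is a concrete gap in your treatment of the momentum marginal. When you enumerate the drift contributions to $\rho_n u_n(t)-\rho_n u_n(s)$ you list $\nabla u_n$, $\rho_n^\gamma$, $\rho_n^\beta$, $c_n^2 Q_n$, $\nabla Q\odot\nabla Q$, $Q\triangle Q$ but omit the artificial viscosity correction $\epsilon\,\nabla\rho_n\cdot\nabla u_n$ from $(\ref{Equ3.1})_3$. This is precisely the problematic term. The basic energy estimate (\ref{3.8*}) only gives $\sqrt{\epsilon}\,\nabla\rho_n\in L^p(\Omega;L^2(0,T;L^2))$, and combined with $\nabla u_n\in L^p(\Omega;L^2(0,T;L^2))$ you get $\epsilon\,\nabla\rho_n\cdot\nabla u_n\in L^p(\Omega;L^1(0,T;L^1))$, hence only an $L^1$-in-time bound for its time integral. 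That is not enough to produce a uniform bound in $C^\alpha([0,T];H^{-k})$ for any $\alpha>0$: you need the integrand in $L^q(0,T;H^{-k})$ for some $q>1$ to convert the time integral into a H\"older-continuous map. The paper closes this by first improving the time integrability of the density: from $\rho_n u_n\in L^2(0,T;L^{6\beta/(\beta+6)})\cap L^\infty(0,T;L^{2\beta/(\beta+1)})$ and interpolation (Lemma \ref{lem6.2}) one gets $\rho_n u_n\in L^q(0,T;L^2)$ for some $q>2$, and then the parabolically regularized continuity equation $(\ref{Equ3.1})_2$ yields $\rho_n\in L^p(\Omega;L^q(0,T;H^1))$ with the same $q>2$. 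Only with this improved integrability can $Y_n=\epsilon\int_0^t P_n(\nabla\rho_n\cdot\nabla u_n)\,ds$ be bounded in $C^\alpha([0,T];H^{-k})$ with $\alpha\in[0,\frac{q-2}{q}]$, as in (\ref{3.29}). Without this step your H\"older estimate fails. The remaining points of your plan (BDG for the stochastic part, interpolation/Aubin--Lions for the compact embedding into $C([0,T];H^{-1})$, Chebyshev to close) are sound and agree with the paper.
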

\begin{proof} It is enough to show that each set of probability measures $\{\nu^n_{(\cdot)}\}_{n\geq 1}$ is tight on the corresponding path space $\mathcal{X}_{(\cdot)}$.\\
\noindent{\bf Claim 1}. The set $ \{\nu_{P_n(\rho u)}^{n}\}_{n\geq 1}$ is tight on path spaces $\mathcal{X}_{\rho u}$.

Decompose $P_n(\rho_nu_n)= X_n+Y_n$, where
\begin{eqnarray*}
&&X_n=m_{0,n}+P_n\int_{0}^{t}-{\rm div}(\rho_n u_n\otimes u_n)-\nabla (\rho^\gamma_n+\delta\rho_n^\beta)
 +\mu_1\Delta u_n+(\mu_1+\mu_2)\nabla ({\rm div} u_n)\\&&\qquad+\nabla\cdot({\rm F}(Q_n){\rm I}_3-\nabla Q_n\odot \nabla Q_n) +\nabla\cdot(Q_n\triangle Q_n-\triangle Q_n Q_n)+\sigma^*\nabla\cdot(c_n^2Q_n)ds\\&&\qquad+\int_{0}^{t}\mathcal{M}^\frac{1}{2}(\rho_n)P_n\sqrt{\rho_n} f(\rho_n,\rho_n u_n, c_n, Q_n)d\mathcal{W},
\end{eqnarray*}
and
\begin{eqnarray*}
Y_n=\epsilon\int_{0}^{t}P_n(\nabla \rho_n\cdot\nabla u_n) ds.
\end{eqnarray*}
The main goal is to get
\begin{eqnarray}\label{3.27}
\mathbb{E}\|P_n(\rho_n u_n)\|_{C^\alpha([0,T];H^{-k}(\mathcal{D}))}\leq C,
\end{eqnarray}
where $C$ is independent of $n$ for $\alpha\in [0,\frac{1}{2})$ and $k\geq \frac{5}{2}$.

Regarding the stochastic term, similar to \eqref{3.19*}, using the Burkholder-Davis-Gundy inequality and condition (\ref{2.1}), we get for all $\alpha\in [0,\frac{1}{2})$
\begin{eqnarray*}
&&\mathbb{E}\left\|\int_{0}^{t}\mathcal{M}^\frac{1}{2}(\rho_n)P_n\sqrt{\rho_n}f(\rho_n,\rho_n u_n, c_n, Q_n)d\mathcal{W}\right\|_{C^\alpha([0,T];H^{-k}(\mathcal{D}))}\\ &&\leq
\mathbb{E}\left[\sup_{t,t'\in [0,T]}\frac{\left\|\int_{t'}^{t}\mathcal{M}^\frac{1}{2}(\rho_n)P_n\sqrt{\rho_n}f(\rho_n,\rho_n u_n, c_n, Q_n)d\mathcal{W}\right\|_{H^{-k}}}{|t-t'|^\alpha}\right]\\
&&\leq \frac{\mathbb{E}\left\|\int_{t'}^{t}\mathcal{M}^\frac{1}{2}(\rho_n)P_n\sqrt{\rho_n}f(\rho_n,\rho_n u_n, c_n, Q_n)d\mathcal{W}\right\|_{H^{-k}}}{|t-t'|^\alpha}+\delta'\\
&&\leq \frac{\mathbb{E}\left(\int_{t'}^{t}\|\mathcal{M}^\frac{1}{2}(\rho_n)\sum_{k\geq 1}P_n\sqrt{\rho_n} f(\rho_n,\rho_n u_n, c_n, Q_n)e_k\|_{L^1}^2dr\right)^\frac{1}{2}}{|t-t'|^\alpha}+\delta'\\
&&\leq C|t-t'|^{\frac{1}{2}-\alpha}\mathbb{E}\left[\sup_{t\in[0,T]}(\|\rho_n\|_{L^\gamma}+\|\sqrt{\rho_n}u_n\|^2_{L^2}+\|c_n\|^2_{L^2}+\|\nabla Q_n\|^2_{L^2})\right]
+\delta'\leq C.
\end{eqnarray*}
Using (\ref{3.8}) and the H\"{o}lder inequality, we have
\begin{align*}
\mathbb{E}\|\nabla\cdot (Q_n\triangle Q_n-\triangle Q_n Q_n)\|_{L^2(0,T;H^{-k}(\mathcal{D}))}^p&\leq \mathbb{E}\left(\int_0^T\|Q_n\triangle Q_n-\triangle Q_n Q_n\|_{L^1}^2dt\right)^\frac{p}{2}\\
&\leq \mathbb{E}\left(\int_0^T\|Q_n\|_{L^2}^2\|\triangle Q_n\|_{L^2}^2dt\right)^\frac{p}{2}\\&\leq \mathbb{E}\left[\sup_{t\in[0,T]}\|Q_n\|_{L^2}^{2p}\right]\mathbb{E}\left(\int_0^T\|\triangle Q_n\|_{L^2}^2dt\right)^p\\ &\leq C,
\end{align*}
and
\begin{eqnarray*}
\mathbb{E}\|\sigma^*\nabla\cdot(c_n^2Q_n)\|_{L^\infty(0,T;H^{-1}(\mathcal{D}))}^p\leq C\|c_n\|_{L^\infty([0,T]\times \mathcal{D})}^{2p}\mathbb{E}\|Q_n\|_{L^\infty(0,T;L^2(\mathcal{D}))}^p\leq C,
\end{eqnarray*}
moreover
\begin{eqnarray*}
\mathbb{E}\|\nabla\cdot({\rm F}(Q_n){\rm I}_3-\nabla Q_n\odot \nabla Q_n)\|_{L^\infty(0,T;H^{-k}(\mathcal{D}))}^p\leq \mathbb{E}\left[\sup_{t\in [0,T]}\|\nabla Q_n\|_{L^2}^p\right]\leq C,
\end{eqnarray*}
where the constant $C$ is independent of $n$.

Furthermore, the bound (\ref{3.8}) together with the H\"{o}lder inequality yields, ${\rm div} (\rho_{n} u_{n}\otimes u_{n}) \in L^{p}(\Omega;L^{2}(0,T; H^{-1, \frac{6\beta}{4\beta+3}}(\mathcal{D})))$, then the Sobolev embedding $H^{-1, \frac{6\beta}{4\beta+3}}(\mathcal{D})\hookrightarrow H^{-k}(\mathcal{D})$ for $k\geq \frac{5}{2}$ implies that ${\rm div} (\rho_{n} u_{n}\otimes u_{n}) \in L^{p}(\Omega;L^{2}(0,T; H^{-k}(\mathcal{D})))$. Also, the bound (\ref{3.8}) implies $\nabla (\rho^\gamma_n+\delta\rho_n^\beta)\in L^{p}(\Omega;L^{\frac{\beta+1}{\beta}}(0,T; H^{-k}(\mathcal{D})))$ using the Sobolev embedding $H^{-1, \frac{\beta+1}{\beta}}(\mathcal{D})\hookrightarrow H^{-k}(\mathcal{D})$.

To find the boundedness of $Y_n$, we need to improve the time integrability of $\rho_n$ following Lemma 2.4 in \cite{Maslow}. By (\ref{3.8}) and the H\"{o}lder inequality, we have
\begin{eqnarray*}
\rho_n u_n\in L^p(\Omega; L^2(0,T; L^\frac{6\beta}{\beta+6}(\mathcal{D})))\cap L^p(\Omega; L^\infty(0,T; L^\frac{2\beta}{\beta+1}(\mathcal{D}))).
\end{eqnarray*}
The interpolation lemma \ref{lem6.2} implies that there exists $q>2$ such that
\begin{eqnarray*}
\rho_n u_n\in L^p(\Omega; L^q(0,T; L^2(\mathcal{D}))),
\end{eqnarray*}
 this estimate together with the bound $\rho_n \in L^p(\Omega; L^\infty(0,T; L^\beta(\mathcal{D})))$ and equation $(\ref{Equ3.1})_2$ yields
\begin{eqnarray}\label{3.31*}
\rho_n\in L^p(\Omega; L^q(0,T; H^1(\mathcal{D}))),~{\rm for} ~q>2.
\end{eqnarray}
Using (\ref{3.31*}) and (\ref{3.8}), we have
\begin{eqnarray}\label{3.29}
&&\mathbb{E}\left\|\epsilon\int_{0}^{t}P_n(\nabla\rho_{n}\cdot\nabla u_{n}) ds\right\|_{C^{\alpha}([0,T];H^{-k}(\mathcal{D}))}\\ &&\leq \mathbb{E}\frac{\left\|\epsilon\int_{t'}^{t} P_n(\nabla\rho_{n}\cdot\nabla u_{n}) ds\right\|_{H^{-k}}}{|t-t'|^\alpha}+\delta'
\leq\mathbb{E}\frac{\epsilon\int_{t'}^{t}\|\nabla\rho_{n}\cdot\nabla u_{n}\|_{L^1} ds}{|t-t'|^\alpha}+\delta'\nonumber\\
&&\leq \frac{1}{|t-t'|^\alpha}\mathbb{E}\int_{0}^t\|\nabla u\|_{L^2}^2ds \mathbb{E}\int_{0}^t\epsilon\|\nabla \rho\|_{L^2}^2ds+\delta' \nonumber\\
&&\leq C|t-t'|^{\frac{q-2}{q}-\alpha} \epsilon\mathbb{E}\left(\int_{0}^t\|\nabla \rho\|_{L^2}^qds\right)^\frac{2}{q}+\delta'\leq C,
\end{eqnarray}
for any $\alpha\in [0,\frac{q-2}{q}]$ and $k\geq\frac{5}{2}$.

Combining all estimates, we get the desired bound (\ref{3.27}). For any $R>0$, define the set
\begin{eqnarray*}
&&B_{1,R}=\bigg\{P_n(\rho_{n}u_n)\in L^{\infty}(0,T;L^{\frac{2\beta}{\beta+1}}(\mathcal{D}))\cap C^{\alpha}([0,T];H^{-k}(\mathcal{D})) :\\&&\qquad\qquad\qquad\qquad\qquad \|\rho_{n}u_n\|_{ L^{\infty}(0,T;L^{\frac{2\beta}{\beta+1}}(\mathcal{D}))}+\|\rho_n u_n\|_{C^{\alpha}([0,T];H^{-k}(\mathcal{D}))}\leq R\bigg\},
\end{eqnarray*}
By the Aubin-Lions lemma \ref{lem6.1}, we know
\begin{eqnarray}
 L^{\infty}(0,T;L^{\frac{2\beta}{\beta+1}}(\mathcal{D}))\cap C^{\alpha}([0,T];H^{-k}(\mathcal{D}))\hookrightarrow L^\infty(0,T; H^{-1}(\mathcal{D})),
\end{eqnarray}
is compact, therefore, the set $B_{1,R}$ is relatively compact in $L^\infty(0,T; H^{-1}(\mathcal{D}))$.  Considering (\ref{3.27}), (\ref{3.9*}) and the Chebyshev inequality, to conclude
\begin{eqnarray*}
&&\nu_{\rho u}^{n}(B_{1,R}^\mathfrak{c})\leq \mathbb{P}\left(\|\rho_n u_n\|_{ L^{\infty}(0,T;L^{\frac{2\beta}{\beta+1}}(\mathcal{D}))}>\frac{R}{2}\right)\\
&&~\qquad\qquad\quad+\mathbb{P}\left(\|\rho_n u_n\|_{C^{\alpha}([0,T];H^{-k}(\mathcal{D}))}>\frac{R}{2}\right)\\
&&~\qquad\qquad\leq \frac{2}{R}\mathbb{E}\left(\|\rho_n u_n\|_{ L^{\infty}(0,T;L^{\frac{2\beta}{\beta+1}}(\mathcal{D}))}+\|\rho_n u_n\|_{C^{\alpha}([0,T];H^{-k}(\mathcal{D}))}\right)\leq\frac{C}{R},
\end{eqnarray*}
leading to the tightness of set $\{\nu_{\rho u}^{n}\}_{n\geq 1}$.

\noindent{\bf Claim 2}. The set $ \{\nu_{c}^{n}\}_{n\geq 1}$ is tight on path space $\mathcal{X}_{c}$.

Note that, for any $R>0$, by the Banach-Alaoglu theorem, the set
\begin{eqnarray*}
B_{2,R}:=\left\{c_{n}\in L^{2}(0,T; H^1(\mathcal{D})): \|c_{n}\|_{L^{2}(0,T;H^1(\mathcal{D}))}\leq R\right\},
\end{eqnarray*}
is relatively compact on path space $L_{w}^{2}(0,T;H^1(\mathcal{D}))$. On the other hand, we have
\begin{eqnarray}\label{3.30}
\partial_t c_n\in L^p(\Omega; L^2(0,T;H^{-1}(\mathcal{D}))).
\end{eqnarray}
Define the set
\begin{eqnarray*}
&&B_{3,R}=\big\{c_{n}\in L^{2}(0,T; H^1(\mathcal{D}))\cap W^{1,2}(0,T;H^{-1}(\mathcal{D})):\\ &&\qquad\qquad\qquad\qquad\|c_{n}\|_{L^{2}(0,T;H^1(\mathcal{D}))}+\|c_n\|_{W^{1,2}(0,T;H^{-1}(\mathcal{D}))}\leq R\big\},
\end{eqnarray*}
which is compact on $L^{2}(0,T; L^2(\mathcal{D}))$. The bounds (\ref{3.8}), (\ref{3.30}) and the Chebyshev inequality imply
\begin{eqnarray*}
&&\nu_{c}^n\left((B_{2,R}\cap B_{3,R})^\mathfrak{c}\right)\leq \nu_c^{n}(B_{2,R}^\mathfrak{c})+ \nu_c^{n}(B_{3,R}^\mathfrak{c})\leq \frac{C}{R}.
\end{eqnarray*}

\noindent{\bf Claim 3}. The set $ \{\nu_{Q}^{n}\}_{n\geq 1}$ is tight on path space $\mathcal{X}_{Q}$.

The proof follows the same line as above, here we only give the necessary estimates. Using (\ref{3.8}) and the H\"{o}lder inequality again, we have
\begin{eqnarray*}
&&\mathbb{E}\|-(u_n\cdot \nabla)Q_n-(Q_n\Psi_n-\Psi_n Q_n)+\Gamma {\rm H}(Q_n,c_n)\|_{L^2(0,T;L^\frac{3}{2}(\mathcal{D}))}\\
&&\leq C \mathbb{E}\left[\int_0^t\|u_n\|_{L^6}^2\|\nabla Q_n\|_{L^2}^2+\|Q_n\|_{L^6}^2\|\nabla u_n\|_{L^2}^2+\|\triangle Q_n\|_{L^2}^2+\|c_n\|_{L^2}^2\|Q_n\|_{L^6}^2ds\right]^\frac{1}{2}\\
&&\leq C\mathbb{E}\left[\sup_{t\in[0,T]}\|Q_n\|_{H^1}^2\right]\mathbb{E}\int_0^t\|\nabla u_n\|_{L^2}^2ds+ \mathbb{E}\int_0^t\|\triangle Q_n\|_{L^2}^2ds\\ &&\quad+\mathbb{E}\left[\sup_{t\in [0,T]}(\|c_n\|^2_{L^2}+\|Q_n\|_{H^1}^2)\right]\leq C,
\end{eqnarray*}
leading to
\begin{eqnarray*}
&&\mathbb{E}\|Q_n\|_{C^\alpha([0,T];L^\frac{3}{2}(\mathcal{D}))}\\
&&\leq\mathbb{E}\frac{\int_{t'}^{t}\|-(u_n\cdot \nabla)Q_n-(Q_n\Psi_n-\Psi_n Q_n)+\Gamma {\rm H}(Q_n,c_n)\|_{L^\frac{3}{2}}ds}{|t-t'|^\alpha}+\delta'\\
&&\leq |t-t'|^{\frac{1}{2}-\alpha}\cdot\mathbb{E}\|-(u_n\cdot \nabla)Q_n-(Q_n\Psi_n-\Psi_n Q_n)+\Gamma {\rm H}(Q_n,c_n)\|_{L^2(0,T;L^\frac{3}{2}(\mathcal{D}))}+\delta'\\ &&\leq C,
\end{eqnarray*}
where $C$ is independent of $n$.

\noindent{\bf Claim 4}. The sets $ \{\nu_{u}^{n}\}_{n\geq 1}$ and $ \{\nu_{\rho}^{n}\}_{n\geq 1}$  are tight on path spaces $\mathcal{X}_{u}, \mathcal{X}_{\rho}$.

Here, we only focus on the tightness of $\{\nu_{\rho}^{n}\}_{n\geq 1}$ on space $L^2([0,T]\times \mathcal{D})$. Since $\rho_n\in L^2(0,T;H^1(\mathcal{D}))$ and $\partial_t\rho_n\in L^2(0,T;H^{-1}(\mathcal{D}))$, then we can show the tightness using the same argument as Claim 2.

Finally, Lemma \ref{lem3.4} follows the result of Claims 1-4.
\end{proof}

\begin{proof}[{\rm Proof of Proposition \ref{pro3.1}}] With the tightness established,  the Skorokhod-Jakubowski theorem is invoked to get that there exists a probability space $(\widetilde{\Omega},\widetilde{\mathcal{F}},\widetilde{\mathbb{P}})$ with $\mathcal{X}$-valued measurable random variables
\begin{eqnarray*}
(\tilde{u}_{n},\tilde{\rho}_{n}, \tilde{q}_{n}, \tilde{c}_n, \widetilde{Q}_n, \widetilde{\mathcal{W}}_{n})~{\rm and }~(\tilde{u},\tilde{\rho}, \tilde{q}, \tilde{c}, \widetilde{Q}, \widetilde{\mathcal{W}}),
\end{eqnarray*}
such that
\begin{eqnarray*}
(\tilde{u}_{n},\tilde{\rho}_{n}, \tilde{q}_{n}, \tilde{c}_n, \widetilde{Q}_n, \widetilde{\mathcal{W}}_{n})\rightarrow(\tilde{u},\tilde{\rho}, \tilde{q}, \tilde{c}, \widetilde{Q}, \widetilde{\mathcal{W}}),~\widetilde{\mathbb{P}}~~ \mbox{a.s.}
\end{eqnarray*}
in the topology of $\mathcal{X}$. Moreover, the joint distribution of $(\tilde{u}_{n},\tilde{\rho}_{n}, \tilde{q}_{n}, \tilde{c}_n, \widetilde{Q}_n, \widetilde{\mathcal{W}}_{n})$ is the same as the law of $(u_n, \rho_{n}, P_n(\rho_{n}u_n), c_n, Q_n, \mathcal{W}_{n})$, consequently, we have $\tilde{q}_{n}=P_n(\tilde{\rho}_n\tilde{u}_n)$, $\widetilde{Q}_n\in S_0^3$, a.s. and the energy estimates (\ref{3.21})-(\ref{3.26}) hold. Moreover, the process $(\tilde{\rho}_n,  P_n(\tilde{\rho}_n\tilde{u}_n), \tilde{c}_n, \widetilde{Q}_n, \widetilde{\mathcal{W}}_n)$ also satisfies the system (\ref{Equ3.1}) using the same argument as \cite{DWang}.

It remains to identify $\tilde{q}=\tilde{\rho}\tilde{u}$.  On the one hand, $ P_n(\tilde{\rho}_n \tilde{u}_n)\rightarrow \tilde{q}$ in $C(0,T;H^{-1}(\mathcal{D}))$, $\widetilde{\mathbb{P}}$ a.s. On the other hand, $\tilde{\rho}_n\rightarrow \tilde{\rho}$ in $L^\infty(0,T; H^{-\frac{1}{2}}(\mathcal{D}))$ and $\tilde{u}_n\rightharpoonup \tilde{u}$ in $L^2(0,T; H^{1}(\mathcal{D}))$ imply $ P_n(\tilde{\rho}_n \tilde{u}_n)\rightharpoonup\tilde{\rho}\tilde{u}$ in $L^2(0,T;H^{-1}(\mathcal{D}))$, $\widetilde{\mathbb{P}}$ a.s. Then, we infer $\tilde{q}=\tilde{\rho}\tilde{u}$.
\end{proof}

\subsection{Taking the limit for $n\rightarrow \infty$}
Based on the Proposition \ref{pro3.1}, we identify the limit of the nonlinear term.
\begin{lemma}\label{lem3.5} For any $\phi\in L^\infty(0,T;H^{1,6}(\mathcal{D}))$ and $t\in [0,T]$, the following convergence holds $\widetilde{\mathbb{P}}$ \mbox{a.s.}
\begin{eqnarray*}
&&\int_0^t\langle {\rm F}(\widetilde{Q}_n){\rm I}_3-\nabla \widetilde{Q}_n\odot\nabla \widetilde{Q}_n+\widetilde{Q}_n\triangle \widetilde{Q}_n-\triangle \widetilde{Q}_n\widetilde{Q}_n+\sigma^*\tilde{c}_n^2\widetilde{Q}_n, \nabla \phi\rangle ds\\ &&\rightarrow \int_0^t\langle {\rm F}(\widetilde{Q}){\rm I}_3-\nabla \widetilde{Q}\odot\nabla \widetilde{Q}+\widetilde{Q}\triangle \widetilde{Q}-\triangle\widetilde{ Q} \widetilde{Q}+\sigma^*\tilde{c}^2\widetilde{Q}, \nabla \phi\rangle ds,
\end{eqnarray*}
as $n\rightarrow \infty$.
\end{lemma}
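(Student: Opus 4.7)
The plan is to decompose the integrand into its five natural pieces---the three scalar contributions inside $\mathrm{F}(\widetilde{Q}_n)\mathrm{I}_3$, the tensor product $\nabla\widetilde{Q}_n\odot\nabla\widetilde{Q}_n$, the antisymmetric commutator $\widetilde{Q}_n\triangle\widetilde{Q}_n-\triangle\widetilde{Q}_n\widetilde{Q}_n$, and the active stress $\sigma^*\tilde{c}_n^2\widetilde{Q}_n$---and pass to the limit in each separately, using Proposition \ref{pro3.1} together with Sobolev embedding and interpolation. Since all convergences from Proposition \ref{pro3.1} hold $\widetilde{\mathbb{P}}$-a.s., it suffices to work pathwise for fixed $\omega$.

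First I would record the convergences available from Proposition \ref{pro3.1} and Lemma \ref{lem3.1}: $\widetilde{Q}_n\to \widetilde{Q}$ strongly in $L^2(0,T;H^1(\mathcal{D}))$ and weakly in $L^2(0,T;H^2(\mathcal{D}))$, with a uniform bound in $L^\infty(0,T;H^1(\mathcal{D}))\cap L^2(0,T;H^2(\mathcal{D}))$; and $\tilde{c}_n\to \tilde{c}$ strongly in $L^2(0,T;L^2(\mathcal{D}))$, together with the pointwise bound $\underline{c}\leq \tilde{c}_n\leq \bar{c}$ inherited from Lemma \ref{lem3.3}. Interpolating the strong convergence of $\nabla\widetilde{Q}_n$ in $L^2(L^2)$ against its uniform bound in $L^2(L^6)$ (obtained from $H^1\hookrightarrow L^6$), I get $\nabla\widetilde{Q}_n\to\nabla\widetilde{Q}$ strongly in $L^2(0,T;L^p(\mathcal{D}))$ for every $p<6$, and analogously $\widetilde{Q}_n\to\widetilde{Q}$ strongly in $L^2(0,T;L^p(\mathcal{D}))$ for every $p<\infty$ via $L^\infty(H^1)\hookrightarrow L^\infty(L^6)$.

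Given these convergences, the three scalar pieces of $\mathrm{F}(\widetilde{Q}_n)$ can be handled directly: $|\nabla\widetilde{Q}_n|^2\to|\nabla\widetilde{Q}|^2$ in $L^1(0,T;L^{6/5}(\mathcal{D}))$, while $\mathrm{tr}(\widetilde{Q}_n^2)$ and $\mathrm{tr}^2(\widetilde{Q}_n^2)$ converge in $L^1(0,T;L^r(\mathcal{D}))$ for a suitable $r>1$; pairing with $\nabla\phi\in L^\infty(0,T;L^6(\mathcal{D}))$ by H\"older then yields the desired limit. The tensor term $\nabla\widetilde{Q}_n\odot\nabla\widetilde{Q}_n$ is treated identically. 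For the active stress, the uniform bound $|\tilde{c}_n|\leq\bar{c}$ combined with $\tilde{c}_n\to\tilde{c}$ in $L^2(L^2)$ upgrades $\tilde{c}_n^2\to\tilde{c}^2$ in $L^q(0,T;L^q(\mathcal{D}))$ for any $q<\infty$ via dominated convergence, and together with the strong convergence of $\widetilde{Q}_n$ this closes the term.

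The main obstacle is the higher-order commutator $\widetilde{Q}_n\triangle\widetilde{Q}_n-\triangle\widetilde{Q}_n\widetilde{Q}_n$, because $\triangle\widetilde{Q}_n$ is only weakly convergent in $L^2(0,T;L^2(\mathcal{D}))$. My strategy is to split
\begin{equation*}
\widetilde{Q}_n\triangle\widetilde{Q}_n-\widetilde{Q}\triangle\widetilde{Q} = (\widetilde{Q}_n-\widetilde{Q})\triangle\widetilde{Q}_n + \widetilde{Q}(\triangle\widetilde{Q}_n-\triangle\widetilde{Q}).
\end{equation*}
The first summand is controlled in $L^1(0,T;L^{3/2}(\mathcal{D}))$ by H\"older, using $\widetilde{Q}_n-\widetilde{Q}\to 0$ strongly in $L^2(0,T;L^6(\mathcal{D}))$ and the uniform $L^2(L^2)$ bound on $\triangle\widetilde{Q}_n$, so pairing with $\nabla\phi\in L^\infty(0,T;L^3(\mathcal{D}))$ makes its contribution vanish as $n\to\infty$. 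For the second summand, the test matrix $\widetilde{Q}\,\nabla\phi$ belongs to $L^2(0,T;L^2(\mathcal{D}))$ on the bounded domain $\mathcal{D}$ (since $\widetilde{Q}\in L^\infty(0,T;L^6(\mathcal{D}))$ and $\nabla\phi\in L^\infty(0,T;L^6(\mathcal{D}))$ produce a product in $L^\infty(0,T;L^3(\mathcal{D}))$), so the weak convergence of $\triangle\widetilde{Q}_n$ in $L^2(L^2)$ delivers the limit. The antisymmetric counterpart $\triangle\widetilde{Q}_n\widetilde{Q}_n$ is treated in exactly the same way, which completes the argument.
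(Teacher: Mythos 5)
Your proof is correct and takes essentially the same route as the paper: both decompose the commutator $\widetilde{Q}_n\triangle\widetilde{Q}_n-\triangle\widetilde{Q}_n\widetilde{Q}_n$ into a piece killed by the strong $L^2(0,T;H^1)$ convergence of $\widetilde{Q}_n$ paired against the uniform $L^2(L^2)$ bound on $\triangle\widetilde{Q}_n$, and a piece killed by the weak $L^2(L^2)$ convergence of $\triangle\widetilde{Q}_n$ tested against $\widetilde{Q}\nabla\phi$, and both handle $\nabla\widetilde{Q}_n\odot\nabla\widetilde{Q}_n$, $\mathrm{F}(\widetilde{Q}_n)$ and $\tilde{c}_n^2\widetilde{Q}_n$ by H\"older together with the strong convergences in Proposition \ref{pro3.1}. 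One minor remark: your claim that $\widetilde{Q}_n\to\widetilde{Q}$ strongly in $L^2(0,T;L^p)$ for every $p<\infty$ does not follow from the $L^\infty(0,T;L^6)$ bound alone (that only gives $p<6$, which is already enough for all the terms you treat, including $\mathrm{tr}^2(Q^2)\sim|Q|^4$ paired with $\nabla\phi\in L^6$); and your use of the pointwise bound $\underline{c}\le\tilde{c}_n\le\bar{c}$ for the active stress is a small, valid simplification compared with the paper, which instead runs the same H\"older bookkeeping using $\tilde{c}_n\in L^2(0,T;L^6)$.
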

\begin{proof} Decompose
\begin{eqnarray*}
&&\int_0^t\langle \widetilde{Q}_n\triangle \widetilde{Q}_n-\triangle \widetilde{Q}_n\widetilde{Q}_n-(\widetilde{Q}\triangle \widetilde{Q}-\triangle\widetilde{ Q} \widetilde{Q}), \nabla \phi \rangle ds\\
&&=\int_0^t\langle (\widetilde{Q}_n-\widetilde{Q})\triangle \widetilde{Q}_n, \nabla \phi\rangle ds+\int_0^t\langle  \widetilde{Q}(\triangle \widetilde{Q}_n-\triangle \widetilde{Q}), \nabla \phi\rangle ds\\
&&\quad+\int_0^t\langle (\triangle \widetilde{Q}-\triangle \widetilde{Q}_n) \widetilde{Q}, \nabla \phi\rangle ds+\int_0^t\langle \triangle \widetilde{Q}_n(\widetilde{Q}-\widetilde{Q}_n), \nabla \phi\rangle ds\\
&&=:J_1+J_2+J_3+J_4.
\end{eqnarray*}
For $J_1, J_4$, by Proposition \ref{pro3.1}(\ref{3.18}) and (\ref{3.26}), we have $\widetilde{\mathbb{P}}$ a.s.
\begin{eqnarray*}
&&|J_1+J_4|\leq \int_0^t \|\nabla \varphi\|_{L^3}\|\widetilde{Q}_n-\widetilde{Q}\|_{L^6}\|\triangle\widetilde{Q}_n\|_{L^2}ds\\
&&\qquad\qquad\leq \|\nabla \varphi\|_{L^\infty(0,T;L^3(\mathcal{D}))}\left(\int_0^t\|\widetilde{Q}_n-\widetilde{Q}\|_{H^1}^2ds\right)^\frac{1}{2}
\left(\int_0^t\|\triangle\widetilde{Q}_n\|_{L^2}^2ds\right)^\frac{1}{2}\rightarrow 0.
\end{eqnarray*}
Also, we have $J_2, J_3\rightarrow 0$ as $n\rightarrow\infty$ using the fact $\triangle \widetilde{Q}_n\rightharpoonup\triangle \widetilde{Q}$ in $L^2([0,T]\times\mathcal{D})$.

On the other hand,  by Proposition \ref{pro3.1}(\ref{3.18}), (\ref{3.25}) and (\ref{3.26}), the following convergences hold $\widetilde{\mathbb{P}}$ a.s.
\begin{eqnarray*}
&&\int_0^t\langle \nabla \widetilde{Q}_n\odot\nabla \widetilde{Q}_n-\nabla \widetilde{Q}\odot\nabla \widetilde{Q}, \nabla \phi\rangle ds\\
&&\leq \int_0^t \|\nabla\widetilde{Q}_n-\nabla\widetilde{Q}\|_{L^2}\|\nabla \widetilde{Q},\nabla \widetilde{Q}_n\|_{L^6}\|\nabla \phi\|_{L^3}ds\\
&&\leq \|\nabla \phi\|_{L^\infty(0,T;L^3(\mathcal{D}))}\left(\int_0^t\|\widetilde{Q}_n-\widetilde{Q}\|_{H^1}^2ds\right)^\frac{1}{2}
\left(\int_0^t\|\nabla\widetilde{Q}_n, \nabla\widetilde{Q}\|_{H^1}^2ds\right)^\frac{1}{2}\rightarrow 0,
\end{eqnarray*}
and
\begin{eqnarray*}
&&\int_0^t\langle \tilde{c}_n^2\widetilde{Q}_n-\tilde{c}^2\widetilde{Q}, \nabla \phi\rangle ds=\int_0^t\langle (\tilde{c}_n^2-\tilde{c}^2)\widetilde{Q}_n+\tilde{c}^2(\widetilde{Q}_n-\widetilde{Q}), \nabla \phi\rangle ds\\
&&\leq \int_0^t \|\tilde{c}_n-\tilde{c}\|_{L^2}\|\tilde{c}_n,\tilde{c}\|_{L^6}\|\widetilde{Q}_n\|_{L^6}\|\nabla \phi\|_{L^6}+\|\tilde{c}\|_{L^6}\|\tilde{c}\|_{L^2}\|\widetilde{Q}_n-\widetilde{Q}\|_{L^6}\|\nabla \phi\|_{L^6}ds\\
&&\leq \left(\|\widetilde{Q}_n\|_{L^\infty(0,T;L^6(\mathcal{D}))}+\|\tilde{c}\|_{L^\infty(0,T;L^2(\mathcal{D}))}\right)\|\nabla \phi\|_{L^\infty(0,T; L^6(\mathcal{D}))}\\ &&\quad\times \left(\int_0^t \|\tilde{c}_n-\tilde{c}\|_{L^2}^2+\| \widetilde{Q}_n-\widetilde{Q}\|_{H^1}^2ds\right)^{\frac{1}{2}}\left(\int_0^t (1+\|\tilde{c}_n, \tilde{c} \|_{L^6})^2ds\right)^{\frac{1}{2}}\rightarrow 0.
\end{eqnarray*}
Similarly, we have $\widetilde{\mathbb{P}}$ a.s.
\begin{eqnarray*}
\int_0^t\langle {\rm F}(\widetilde{Q}_n){\rm I}_3-{\rm F}(\widetilde{Q}){\rm I}_3,\nabla \phi\rangle ds\rightarrow 0.
\end{eqnarray*}
This completes the proof.
\end{proof}
\begin{lemma}\label{lem3.6}  For any $\varphi\in L^\infty(0,T;L^{3}(\mathcal{D}))$ and $t\in [0,T]$, the following convergence holds $\widetilde{\mathbb{P}}$ \mbox{a.s.}
\begin{eqnarray*}
&&\int_0^t\langle(\tilde{u}_n\cdot \nabla)\widetilde{Q}_n+\widetilde{Q}_n\widetilde{\Psi}_n-\widetilde{\Psi}_n\widetilde{ Q}_n-\Gamma {\rm H}(\widetilde{Q}_n,\tilde{c}_n),\varphi\rangle ds\\
&&\rightarrow \int_0^t\langle(\tilde{u}\cdot \nabla)\widetilde{Q}+\widetilde{Q}\widetilde{\Psi}-\widetilde{\Psi} \widetilde{Q}-\Gamma {\rm H}(\widetilde{Q},\tilde{c}), \varphi\rangle ds,
\end{eqnarray*}
as $n\rightarrow \infty$.
\end{lemma}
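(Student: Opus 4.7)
The plan is to adopt the same decomposition-and-Hölder strategy used in Lemma \ref{lem3.5}, exploiting the convergences from Proposition \ref{pro3.1}: namely $\tilde u_n \rightharpoonup \tilde u$ weakly in $L^{2}(0,T;H^{1}(\mathcal{D}))$ (hence in $L^{2}(L^{6})$), $\widetilde Q_n \to \widetilde Q$ strongly in $L^{2}(0,T;H^{1}(\mathcal{D}))$ with $\triangle \widetilde Q_n \rightharpoonup \triangle \widetilde Q$ weakly in $L^{2}(L^{2})$, $\tilde c_n \to \tilde c$ strongly in $L^{2}(L^{2})$, together with the uniform bounds \eqref{3.22}--\eqref{3.26} and the almost sure pointwise bound $0<\underline{c}\leq \tilde c_n\leq \bar c$ inherited from Lemma \ref{lem3.3}.

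First I would handle the transport term $(\tilde u_n\cdot\nabla)\widetilde Q_n$ by writing
\[
\tilde u_n\cdot\nabla\widetilde Q_n-\tilde u\cdot\nabla\widetilde Q=\tilde u_n\cdot(\nabla\widetilde Q_n-\nabla\widetilde Q)+(\tilde u_n-\tilde u)\cdot\nabla\widetilde Q.
\]
Pairing the first piece against $\varphi\in L^{\infty}(L^{3})$ and using Hölder with the triple $(L^{6},L^{2},L^{3})$ in space gives a bound by $\|\tilde u_n\|_{L^{2}(L^{6})}\,\|\nabla\widetilde Q_n-\nabla\widetilde Q\|_{L^{2}(L^{2})}\,\|\varphi\|_{L^{\infty}(L^{3})}\to 0$; the second piece is a weak–strong pairing since $\varphi\nabla\widetilde Q\in L^{2}(L^{6/5})$ while $\tilde u_n-\tilde u\rightharpoonup 0$ in $L^{2}(L^{6})$. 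Next, for the rotation term $\widetilde Q_n\widetilde\Psi_n-\widetilde\Psi_n\widetilde Q_n$, splitting each product as $(\widetilde Q_n-\widetilde Q)\widetilde\Psi_n+\widetilde Q(\widetilde\Psi_n-\widetilde\Psi)$ reduces it to the same mechanism: the strong $L^{2}(L^{6})$ convergence of $\widetilde Q_n$ kills the first summand against the bounded $\|\widetilde\Psi_n\|_{L^{2}(L^{2})}$, and the weak convergence $\widetilde\Psi_n\rightharpoonup\widetilde\Psi$ in $L^{2}(L^{2})$ tested against $\widetilde Q\,\varphi\in L^{\infty}(L^{2})\subset L^{2}(L^{2})$ handles the second.

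For $\Gamma H(\widetilde Q_n,\tilde c_n)$ I would treat the four contributions separately. The Laplacian piece converges directly from $\triangle\widetilde Q_n\rightharpoonup\triangle\widetilde Q$ in $L^{2}(L^{2})$ tested against $\varphi\in L^{\infty}(L^{3})\subset L^{2}(L^{2})$. For $(\tilde c_n-c_*)\widetilde Q_n$ I would decompose $\tilde c_n\widetilde Q_n-\tilde c\widetilde Q=(\tilde c_n-\tilde c)\widetilde Q_n+\tilde c(\widetilde Q_n-\widetilde Q)$ and use $\tilde c_n\to\tilde c$ strongly in $L^{2}(L^{2})$ plus the uniform $L^{\infty}$-bound on $\tilde c$ together with the $L^{2}(L^{6})$ strong convergence of $\widetilde Q_n$, paired against $\varphi\in L^{\infty}(L^{3})$ via Hölder $(L^{2},L^{6},L^{3})$. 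For the quadratic term, factor $\widetilde Q_n^{2}-\widetilde Q^{2}=(\widetilde Q_n-\widetilde Q)(\widetilde Q_n+\widetilde Q)$ and use $(L^{6},L^{6},L^{3/2})$ Hölder together with $L^{3}\hookrightarrow L^{3/2}$ on the bounded domain. For the cubic term $\widetilde Q_n\operatorname{tr}(\widetilde Q_n^{2})-\widetilde Q\operatorname{tr}(\widetilde Q^{2})$, split as $(\widetilde Q_n-\widetilde Q)|\widetilde Q_n|^{2}+\widetilde Q(|\widetilde Q_n|^{2}-|\widetilde Q|^{2})$ and apply Hölder with $(L^{6},L^{3},L^{2})$, absorbing $\||\widetilde Q_n|^{2}\|_{L^{3}}=\|\widetilde Q_n\|_{L^{6}}^{2}$ into the uniform bound \eqref{3.26} via $H^{1}\hookrightarrow L^{6}$.

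I expect the cubic nonlinearity to be the tightest step, because the natural integrability drops to $L^{\infty}(L^{2})$ in space and one must use all the available bounds from \eqref{3.26} simultaneously; however, since we only need the time integrand to be integrable on the fixed interval $[0,t]$ and $\widetilde Q_n$ is bounded in $L^{\infty}(0,T;L^{6})$ uniformly in $n$ while $\widetilde Q_n\to\widetilde Q$ strongly in $L^{2}(L^{6})$, the Cauchy–Schwarz bound $\bigl(\int_{0}^{t}\|\widetilde Q_n-\widetilde Q\|_{L^{6}}^{2}\bigr)^{1/2}\bigl(\int_{0}^{t}\|\widetilde Q_n\|_{L^{6}}^{4}\bigr)^{1/2}\|\varphi\|_{L^{\infty}(L^{2})}$ goes to zero $\widetilde{\mathbb{P}}$-a.s., completing the lemma.
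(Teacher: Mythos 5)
Your proposal is correct and follows essentially the same strategy as the paper: decompose each nonlinear term into weak--strong pairings, estimate the strong-convergence pieces via H\"older in space followed by Cauchy--Schwarz in time using the strong convergence of $\widetilde Q_n$ in $L^2(0,T;H^1)$ and $\tilde c_n$ in $L^2(0,T;L^2)$ together with the uniform bounds (3.22), (3.25), (3.26), and handle the remaining terms by weak convergence of $\tilde u_n$ (hence $\widetilde\Psi_n$) and $\triangle\widetilde Q_n$. The paper treats only the transport term $(\tilde u_n\cdot\nabla)\widetilde Q_n$ explicitly and dispatches the rotation and $\mathrm{H}(\widetilde Q_n,\tilde c_n)$ terms by a ``similar argument''; you supply the details for those terms, and your exponent accounting checks out.
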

\begin{proof} Decompose
\begin{eqnarray*}
&&\int_0^t\langle(\tilde{u}_n\cdot \nabla)\widetilde{Q}_n-(\tilde{u}\cdot \nabla)\widetilde{Q},\varphi\rangle ds\\&&=\int_0^t\langle(\tilde{u}_n\cdot \nabla)(\widetilde{Q}_n-\widetilde{Q}),\varphi\rangle ds+\int_0^t\langle(\tilde{u}_n-\tilde{u})\cdot \nabla)\widetilde{Q},\varphi\rangle ds\\&&=:J_1+J_2.
\end{eqnarray*}
For $J_1$, using the Proposition \ref{pro3.1}(\ref{3.18}), (\ref{3.22}) and the H\"{o}lder inequality, to get $\widetilde{\mathbb{P}}$ a.s.
\begin{eqnarray*}
&&~|J_1|\leq \int_0^t\|\tilde{u}_n\|_{L^6}\|\nabla(\widetilde{Q}_n-\widetilde{Q})\|_{L^2}\|\varphi\|_{L^3}ds\\
&&\quad\quad\leq \|\varphi\|_{L^{\infty}(0,T;L^3(\mathcal{D}))}\left(\int_0^t\|\nabla(\widetilde{Q}_n-\widetilde{Q})\|_{L^2}^2ds\right)^\frac{1}{2}
\left(\int_0^t\|\tilde{u}_n\|_{H^1}^2ds\right)^\frac{1}{2}\rightarrow 0.
\end{eqnarray*}
Also, we have $J_2\rightarrow 0$ as $n\rightarrow\infty$ which is the result of the convergence of $\tilde{u}_n$ in $\mathcal{X}_u$, $\widetilde{\mathbb{P}}$ a.s.
By the similar argument as the first term, using the strong convergences of $\widetilde{Q}_n$ in $\mathcal{X}_Q$ and $\tilde{c}_n$ in $\mathcal{X}_c$, $\widetilde{\mathbb{P}}$ a.s. and $(\ref{3.22}), (\ref{3.25}), (\ref{3.26})$, we have $\widetilde{\mathbb{P}}$ a.s.
\begin{eqnarray*}
\int_0^t\langle\widetilde{Q}_n\widetilde{\Psi}_n-\widetilde{\Psi}_n\widetilde{ Q}_n-\Gamma {\rm H}(\widetilde{Q}_n,\tilde{c}_n),\varphi\rangle ds
\rightarrow \int_0^t\langle\widetilde{Q}\widetilde{\Psi}-\widetilde{\Psi} \widetilde{Q}-\Gamma {\rm H}(\widetilde{Q},\tilde{c}), \varphi\rangle ds.
\end{eqnarray*}
as $n\rightarrow\infty$.
\end{proof}

For the sake of elaborating the convergence of term $\epsilon\nabla \tilde{\rho}_n\cdot\nabla \tilde{u}_n$, Feireisl-Novotn\'{y}-Petzeltov\'{a} \cite{Feireisl} showed
$\tilde{\rho}_n\rightarrow \tilde{\rho}$ in $L^2(0,T;H^1(\mathcal{D}))$, $\widetilde{\mathbb{P}}$ a.s. Then, we have
\begin{eqnarray}\label{3.34}
\nabla\tilde{\rho}_n\cdot\nabla \tilde{u}_n \rightarrow \nabla\tilde{\rho}\cdot\nabla \tilde{u} ~{\rm in }~L^\infty([0,T]\times\mathcal{D})', ~ \widetilde{\mathbb{P}}~ \mbox{a.s.}
\end{eqnarray}
Moreover, \cite{Hofmanova, DWang, 16} give
\begin{eqnarray}\label{3.35}
\tilde{\rho}_{n}\tilde{u}_{n}\otimes \tilde{u}_{n}\rightarrow \tilde{\rho}\tilde{u}\otimes \tilde{u} ~{\rm in}~~L^\infty([0,T]\times\mathcal{D})',~\widetilde{\mathbb{P}}~\mbox{a.s.}
\end{eqnarray}
and for $q\in [1, \frac{2\beta}{\beta+1})$
\begin{eqnarray}\label{3.36}
\tilde{\rho}_{n}\tilde{u}_{n}\rightarrow \tilde{\rho}\tilde{u} ~{\rm in}~ L^q([0,T]\times \mathcal{D}),~\widetilde{\mathbb{P}}~\mbox{a.s.}
\end{eqnarray}
Furthermore, using the Proposition \ref{pro3.1}(\ref{3.18}), (\ref{3.22}), (\ref{3.25}), we have
\begin{eqnarray}\label{3.36*}
\tilde{u}_n\cdot\nabla \tilde{c}_n \rightarrow \tilde{u}\cdot\nabla \tilde{c} ~{\rm in }~L^\infty([0,T]\times\mathcal{D})', ~ \widetilde{\mathbb{P}}~ \mbox{a.s.}
\end{eqnarray}

Define the functional for any $\phi\in \cup X_n$
\begin{align*}
\mathcal{N}(\rho,u, c, Q)&=\int_{\mathcal{D}}\rho u\phi dx-\int_{\mathcal{D}}m(0)\phi dx\nonumber\\
&-\int_{0}^{t}\int_{\mathcal{D}}(\rho u\otimes u)-\mu_1\nabla u)\nabla\phi-((\mu_1+\mu_2){\rm div} u+\rho^{\gamma}+\delta\rho^{\beta}){\rm div}\phi dxds\nonumber\\&+ \int_{0}^{t}\int_{\mathcal{D}}\sigma^*(c^2Q)\nabla\phi dxds-\int_{0}^{t}\int_{\mathcal{D}}\epsilon  \phi\nabla \rho\cdot\nabla u dxds\nonumber\\ &+\int_{0}^{t}\int_{\mathcal{D}}({\rm F}(Q){\rm I}_3-\nabla Q\odot \nabla Q+Q\triangle Q-\triangle Q Q)\nabla\phi dxds.
\end{align*}
Following ideas of \cite{Hofmanova, ZM}, we are able to obtain the limit $(\tilde{c}, \tilde{\rho}, \tilde{u}, \widetilde{Q}, \widetilde{\mathcal{W}})$ satisfies the momentum equation once we show that the process $\mathcal{N}(\tilde{c},\tilde{\rho}, \tilde{u},\widetilde{Q})_t$ is a square integral martingale and its quadratic and cross variations satisfy
\begin{eqnarray}
&& \ll \mathcal{N}(\tilde{c},\tilde{\rho}, \tilde{u},\widetilde{Q})_t\gg=\sum_{k\geq 1}\int_{0}^{t}\langle \tilde{\rho}f(\tilde{\rho}, \tilde{\rho} \tilde{u},\tilde{c}, \widetilde{Q})\beta_k,\phi\rangle ds,\label{3.41}\\
&&\ll \mathcal{N}(\tilde{c},\tilde{\rho}, \tilde{u},\widetilde{Q})_t, \tilde{\beta}_k\gg=\int_{0}^{t}\langle \tilde{\rho}f(\tilde{\rho}, \tilde{\rho} \tilde{u},\tilde{c}, \widetilde{Q})\beta_k,\phi\rangle ds.\label{3.42}
\end{eqnarray}
Here, we only focus on the noise term. It is enough to show that $\widetilde{\mathbb{P}}\otimes \mathcal{L}~\mbox{a.e.}$
\begin{align}
&&\langle \mathcal{M}^{\frac{1}{2}}(\tilde{\rho}_n)P_n(\sqrt{\tilde{\rho}_n} f(\tilde{\rho}_n, \tilde{\rho}_n \tilde{u}_n,\tilde{c}_n, \widetilde{Q}_n)\cdot),\phi\rangle\rightarrow \langle \tilde{\rho}f(\tilde{\rho}, \tilde{\rho} \tilde{u},\tilde{c}, \widetilde{Q})\cdot,\phi\rangle ~{\rm in} ~L_{2}(\mathcal{H};\mathbb{R}).  \label{3.39}
\end{align}
Toward proving the convergence, we estimate by the Minkowski inequality
\begin{eqnarray*}
&&\left\|\langle \mathcal{M}^{\frac{1}{2}}(\tilde{\rho}_n)P_n(\sqrt{\tilde{\rho}_n} f(\tilde{\rho}_n, \tilde{\rho}_n \tilde{u}_n,\tilde{c}_n, \widetilde{Q}_n)\cdot,\phi\rangle-\langle \tilde{\rho}f(\tilde{\rho}, \tilde{\rho} \tilde{u},\tilde{c}, \widetilde{Q})\cdot,\phi\rangle\right\|_{L_{2}(\mathcal{H};\mathbb{R})}\\
&&\leq C\left\|\mathcal{M}^{\frac{1}{2}}(\tilde{\rho}_n)P_n(\sqrt{\tilde{\rho}_n} f(\tilde{\rho}_n, \tilde{\rho}_n \tilde{u}_n,\tilde{c}_n, \widetilde{Q}_n)-\tilde{\rho}f(\tilde{\rho}, \tilde{\rho} \tilde{u},\tilde{c}, \widetilde{Q})\right\|_{L_{2}(\mathcal{H};H^{-k})}\\
&&\leq C\left(\sum_{k\geq 1}\|\tilde{\rho}_n f(\tilde{\rho}_n, \tilde{\rho}_n \tilde{u}_n,\tilde{c}_n, \widetilde{Q}_n)e_k-\tilde{\rho}f(\tilde{\rho}, \tilde{\rho} \tilde{u},\tilde{c}, \widetilde{Q})e_k\|_{L^1}^2\right)^\frac{1}{2}\\
&&\quad+\left\|\mathcal{M}^{\frac{1}{2}}(\tilde{\rho}_n)P_n(\sqrt{\tilde{\rho}_n}f(\tilde{\rho}_n, \tilde{\rho}_n \tilde{u}_n,\tilde{c}_n, \widetilde{Q}_n)-\tilde{\rho}_nf(\tilde{\rho}_n, \tilde{\rho}_n \tilde{u}_n,\tilde{c}_n, \widetilde{Q}_n)\right\|_{L_{2}(\mathcal{H};H^{-k})}\\
&&\leq C\int_{\mathcal{D}}\left(\sum_{k\geq 1}|\tilde{\rho}_n f(\tilde{\rho}_n, \tilde{\rho}_n \tilde{u}_n,\tilde{c}_n, \widetilde{Q}_n)e_k-\tilde{\rho}f(\tilde{\rho}, \tilde{\rho} \tilde{u},\tilde{c}, \widetilde{Q})e_k|^2\right)^\frac{1}{2}dx\\
&&\quad+\left\|\mathcal{M}^{\frac{1}{2}}(\tilde{\rho}_n)P_n(\sqrt{\tilde{\rho}_n} f(\tilde{\rho}_n, \tilde{\rho}_n \tilde{u}_n,\tilde{c}_n, \widetilde{Q}_n))-\tilde{\rho}_nf(\tilde{\rho}_n, \tilde{\rho}_n \tilde{u}_n,\tilde{c}_n, \widetilde{Q}_n)\right\|_{L_{2}(\mathcal{H};H^{-k})}\\
&&\leq \mathcal{J}_{1}+\mathcal{J}_{2}.
\end{eqnarray*}
Next, we show that $\mathcal{J}_{1}, \mathcal{J}_2\rightarrow 0$, as $n\rightarrow \infty$, $~\widetilde{\mathbb{P}}\otimes \mathcal{L}~\mbox{a.e.}$ Indeed,
by condition (\ref{2.2}) as well as Proposition \ref{pro3.1}, we have
\begin{eqnarray*}
&&|\mathcal{J}_{1}|\leq C\|\tilde{\rho}_n-\tilde{\rho}, \tilde{\rho}_n \tilde{u}_n-\tilde{\rho}\tilde{u}, \tilde c_n-\tilde c,  \widetilde Q_n-\widetilde Q\|_{L^\frac{2\gamma}{\gamma+1}}\rightarrow 0.
\end{eqnarray*}
Also, using the H\"{o}lder inequality, condition (\ref{2.1}), the bound \eqref{3.8} and Proposition \ref{pro3.1}, we have $\mathcal{J}_2\rightarrow 0$, as $n\rightarrow \infty$, see also \cite[Proposition 4.11]{Hofmanova}. Then, (\ref{3.39}) follows. We could obtain equalities (\ref{3.41}), \eqref{3.42} by combining (\ref{3.34})-(\ref{3.36}), (\ref{3.39}), Proposition \ref{pro3.1} and the Vitali convergence theorem 6.1.

Using the same argument as above, we infer that it holds $\widetilde{\mathbb{P}}$ a.s.
\begin{align*}
&\int_{\mathcal{D}}\tilde{c}(t)\ell dx=\int_{\mathcal{D}}\tilde{c}(0)\ell dx-\int_{0}^{t}\int_{\mathcal{D}} (\tilde{u}\cdot\nabla)\tilde{c} \cdot\ell dxds-\int_{0}^{t}\int_{\mathcal{D}} \nabla \tilde{c} \cdot\nabla\ell dxds,\\
&\int_{\mathcal{D}}\widetilde{Q }(t)\varphi dx=\int_{\mathcal{D}}\widetilde{Q}(0) \varphi dx-\int_{0}^{t}\int_{\mathcal{D}}((\tilde{u}\cdot \nabla)\widetilde{Q}+\widetilde{Q}\widetilde{\Psi}-\widetilde{\Psi} \widetilde{Q})\varphi dxds\nonumber\\&\quad\qquad\qquad\quad+\int_{0}^{t}\int_{\mathcal{D}}\Gamma\varphi {\rm H}(\widetilde{Q},\tilde{c})dxds,
\end{align*}
for $\ell\in C^\infty(\mathcal{D}), \varphi\in C^\infty(\mathcal{D})$, $t\in [0,T]$. We summarize the result for this section,
\begin{proposition} For $\beta>{\rm max}\{6,\gamma\}$, fixed $\delta>0$. If conditions (\ref{2.1}), (\ref{2.2}) hold. There exists a global weak martingale solution to modified system (\ref{Equ3.1})-(\ref{3.5}).
\end{proposition}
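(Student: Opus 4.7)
The plan is to assemble the proposition from the machinery already developed in Section~3, organized as a standard four-step stochastic compactness argument: build, estimate, converge, identify. First I would construct solutions at the doubly-truncated level $(n,K)$: given a candidate velocity $u_n^K \in L^2(\Omega;C([0,T];C^2(\mathcal{D})))$, solve the parabolic density equation via Lemma~\ref{lem3.2} to obtain $\rho = \mathcal{S}(u_n^K)$, solve the $(c,Q)$ subsystem via Lemma~\ref{lem3.3} to obtain $(c,Q) = \widetilde{\mathcal{S}}(u_n^K)$, then feed everything into the Galerkin momentum equation written in fixed-point form \eqref{3.16}. The estimates \eqref{3.14*}--\eqref{3.18*} show that the associated map $\mathcal{Y}$ is a contraction on a short time interval $T^\ast$ depending on $(n,K,\delta)$, and Banach's theorem produces a unique solution $u_n^K$. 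The a priori bound of Lemma~\ref{lem3.1}, which is uniform in $(n,K)$, then allows iteration of this short-time solvability to the full interval $[0,T]$.

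Next I would remove the truncation by letting $K \to \infty$ for fixed $n$. Using the uniqueness of the fixed point and the monotonicity of the stopping times $\tau_K$, the solutions on $[0,\tau_{K_1})$ and $[0,\tau_{K_2})$ coincide for $K_1 \le K_2$, which defines a consistent process $(\rho_n,u_n,c_n,Q_n)$ on $[0,\sup_K \tau_K)$. The bound \eqref{3.19*} on the stochastic part together with the finite-dimensional control of $u_n^K$ (Corollary 3.2 of \cite{Hofmanova}) gives $\mathbb{P}(\sup_K \tau_K < T) = 0$, so no blow-up occurs and the Galerkin level is global on $[0,T]$. At this stage the approximate system \eqref{Equ3.1} is satisfied by $(\rho_n,u_n,c_n,Q_n)$ with the estimates \eqref{3.8}--\eqref{3.9*} in hand.

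The third step is stochastic compactness. Lemma~\ref{lem3.4} establishes tightness of the joint laws $\nu^n$ on the quasi-Polish path space $\mathcal{X}$ by decomposing the momentum into a H\"older-in-time stochastic-integral-plus-drift part $X_n$ and a lower-order viscous correction $Y_n$; the extra integrability $\rho_n \in L^p(\Omega;L^q(0,T;H^1))$ for some $q > 2$ from \eqref{3.31*} is what makes $Y_n$ tractable. The Skorokhod--Jakubowski theorem then yields new random variables $(\tilde u_n,\tilde\rho_n,P_n(\tilde\rho_n\tilde u_n),\tilde c_n,\widetilde Q_n,\widetilde{\mathcal W}_n) \to (\tilde u,\tilde\rho,\tilde\rho\tilde u,\tilde c,\widetilde Q,\widetilde{\mathcal W})$ $\widetilde{\mathbb P}$-a.s.\ in $\mathcal X$, with the same laws, the same estimates, and still solving \eqref{Equ3.1}; the identification $\tilde q = \tilde\rho \tilde u$ is immediate from combining weak $L^2$ convergence of $\tilde u_n$ with strong $L^\infty_t H^{-1/2}_x$ convergence of $\tilde\rho_n$.

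The final and most delicate step is passing to the limit in the nonlinear terms. Lemmas~\ref{lem3.5} and~\ref{lem3.6} already handle the $Q$-tensor quadratic and cubic terms, the active stress $\sigma^\ast c^2 Q$ and the transport/rotation terms by combining the strong $\mathcal X_Q$-convergence of $\widetilde Q_n$ with the weak $L^2_tH^2_x$-convergence of $\triangle\widetilde Q_n$; the convective term $\tilde\rho_n\tilde u_n\otimes \tilde u_n$, the pressure and the $\epsilon\nabla\tilde\rho_n\cdot\nabla\tilde u_n$ term are passed to the limit via \eqref{3.34}--\eqref{3.36*}, which are the standard convergences from \cite{Feireisl,Hofmanova,DWang,16} and require the strong convergence $\tilde\rho_n \to \tilde\rho$ in $L^2(0,T;H^1)$ coming from the positive artificial viscosity $\epsilon > 0$. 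I expect the main obstacle to be the identification of the stochastic integral, carried out through the martingale-characterization technique: verify that $\mathcal N(\tilde c,\tilde\rho,\tilde u,\widetilde Q)_t$ is a square-integrable martingale with the correct quadratic and cross variations \eqref{3.41}--\eqref{3.42}, the core analytic step being the convergence \eqref{3.39} in $L_2(\mathcal H;\mathbb R)$. This uses the Lipschitz-type assumption \eqref{2.2} to control $\mathcal J_1$ together with the finite-dimensional projection error $\mathcal J_2$, combined with the Vitali convergence theorem to upgrade $\widetilde{\mathbb P}\otimes\mathcal L$-a.e.\ convergence to the integrated form. Once this is done, the analogous (but simpler) identifications for the $c$ and $Q$ equations follow, and the collected weak formulations exhibit $(\tilde\rho,\tilde u,\tilde c,\widetilde Q,\widetilde{\mathcal W})$ as the sought global weak martingale solution of \eqref{Equ3.1}--\eqref{3.5}.
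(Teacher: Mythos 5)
Your proposal is correct and follows essentially the same four-step route as the paper: the $(n,K)$-level Banach fixed point via Lemmas~\ref{lem3.2}--\ref{lem3.3} and the contraction estimates \eqref{3.14*}--\eqref{3.18*}, removal of the truncation by the stopping-time argument and \eqref{3.19*}, stochastic compactness via Lemma~\ref{lem3.4} and the Skorokhod--Jakubowski theorem (Proposition~\ref{pro3.1}), and identification of the nonlinear and stochastic terms via Lemmas~\ref{lem3.5}--\ref{lem3.6}, the convergences \eqref{3.34}--\eqref{3.36*}, and the martingale characterization \eqref{3.39}--\eqref{3.42}. No substantive deviation or gap from the paper's argument.
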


\maketitle
\section{The existence of martingale solution for vanishing artificial viscosity}
In this section, we let $\epsilon\rightarrow 0$ to build the existence of global weak martingale solution to the following system
\begin{eqnarray}\label{Equ4.1}
\left\{\begin{array}{ll}
\partial_{t}c+(u\cdot \nabla)c=\triangle c,\\
\partial_{t}\rho+{\rm div}(\rho u)=0,\\
\partial_{t}(\rho u)+{\rm div}(\rho u\otimes u)+\nabla (\rho^\gamma+\delta\rho^\beta)
 =\mu_1\Delta u+(\mu_1+\mu_2)\nabla ({\rm div} u)+\sigma^*\nabla\cdot(c^2Q)\\ \qquad\qquad+\nabla\cdot({\rm F}(Q){\rm I}_3-\nabla Q\odot \nabla Q)+\nabla\cdot(Q\triangle Q-\triangle Q Q) +\rho f(\rho,\rho u, c, Q)\frac{d\mathcal{W}}{dt},\\
\partial_{t}Q+(u\cdot \nabla)Q+Q\Psi-\Psi Q=\Gamma {\rm H}(Q,c).
\end{array}\right.
\end{eqnarray}

The solutions $(\rho_{\epsilon,\delta}, u_{\epsilon,\delta}, c_{\epsilon,\delta}, Q_{\epsilon,\delta})$ obtained in the first level approximation will be used for the approximate solution in this section, which shares the same energy bounds with (\ref{3.21})-(\ref{3.26}). Namely,
\begin{eqnarray}
&&\rho_{\epsilon,\delta} u_{\epsilon,\delta}\in L^{p}(\Omega; L^{\infty}(0,T; L^{\frac{2\beta}{\beta+1}}(\mathcal{D}))),\label{4.2}\\
&&\rho_{\epsilon,\delta}\in L^{p}(\Omega; L^{\infty}(0,T; L^{\beta}(\mathcal{D}))),\label{4.3}\\
&&u_{\epsilon,\delta}\in L^{p}(\Omega; L^{2}(0,T; H^{1}(\mathcal{D}))),\label{4.4}\\
&&\sqrt{\rho_{\epsilon,\delta}} u_{\epsilon,\delta}\in L^{p}(\Omega; L^{\infty}(0,T;L^{2}(\mathcal{D}))),\label{4.4*}\\
&&c_{\epsilon,\delta}\in L^{p}(\Omega; L^{\infty}(0,T; L^{2}(\mathcal{D}))\cap L^{2}(0,T; H^{1}(\mathcal{D}))),\label{4.5}\\
&&Q_{\epsilon,\delta}\in L^{p}(\Omega; L^{\infty}(0,T; H^{1}(\mathcal{D}))\cap L^{2}(0,T; H^{2}(\mathcal{D}))).\label{4.6}
\end{eqnarray}
The proof also consists of the argument of tightness and identifying the limit. Note that, here we can not make use of the a priori bound $\sqrt{\epsilon}\rho_{\epsilon,\delta} \in L^{p}(\Omega; L^{2}(0,T;H^{1}(\mathcal{D})))$ to gain the tightness of the distribution of density on path space $L^2(0,T;L^2(\mathcal{D}))$ . Therefore, we are not able to identify the pressure and stochastic term. To overcome this difficulty, we first improve the integrability of density. We replace $(\rho_{\epsilon,\delta}, u_{\epsilon,\delta}, c_{\epsilon,\delta}, Q_{\epsilon, \delta})$ by $(\rho_{\epsilon}, u_{\epsilon}, c_{\epsilon}, Q_\epsilon)$ to simplify the notation.

Recall the operator $\mathcal{T}$ constructed by Bogovskii \cite{19} related to the problem
\begin{eqnarray*}
{\rm div}v=f,~~v|_{\partial \mathcal{D}}=0,
\end{eqnarray*}
with the following properties:

1.  $\mathcal{T}:\left\{f\in L^p: \int_{\mathcal{D}}fdx=0\right\}\rightarrow H^{1,p}_0(\mathcal{D})$ is a bounded linear operator such that for all $p>1$
\begin{eqnarray}\label{4.7}
\|\mathcal{T}(f)\|_{H^{1,p}_0(\mathcal{D})}\leq C\|f\|_{L^p(\mathcal{D})}.
\end{eqnarray}

2. $v=\mathcal{T}(f)$ is a solution to above equation.

3. For any function $g\in L^r(\mathcal{D})$ with $g\cdot \vec{n}|_{\partial \mathcal{D}}=0$, it holds
\begin{eqnarray}\label{4.8}
\|\mathcal{T}({\rm div}g)\|_{L^{r}(\mathcal{D})}\leq C\|g\|_{L^r(\mathcal{D})}.
\end{eqnarray}
The proof of above properties, we refer the readers to \cite{20,21} and the references therein for details.
\begin{lemma} \label{lem4.1} The approximate sequence $\rho_{\epsilon}$ satisfies the following estimate
\begin{eqnarray*}
\mathbb{E}\int_{0}^{T}\int_{\mathcal{D}}\rho_{\epsilon}^{\gamma+1}+\delta\rho_{\epsilon}^{\beta+1}dxdt\leq C,
\end{eqnarray*}
where the constant $C$ is independent of $\epsilon$.
\end{lemma}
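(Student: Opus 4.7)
The strategy is the classical Bogovski\u{\i} test-function argument (cf.~\cite{Feireisl, Hofmanova, Lions}), adapted to the stochastic setting via It\^{o}'s formula. I take as test function
\[
\phi_\epsilon(t,x):=\mathcal{T}\bigl[\rho_\epsilon(t,x)-(\rho_\epsilon(t))_m\bigr], \qquad (\rho_\epsilon(t))_m:=\tfrac{1}{|\mathcal{D}|}\!\int_{\mathcal{D}}\!\rho_\epsilon(t,x)\,dx.
\]
Integrating the approximate continuity equation $\partial_t\rho_\epsilon+\mathrm{div}(\rho_\epsilon u_\epsilon)=\epsilon\triangle\rho_\epsilon$ over $\mathcal{D}$ and invoking the boundary conditions \eqref{3.2} shows $(\rho_\epsilon)_m$ is conserved in $t$, hence bounded by $M$ uniformly in $\epsilon$ by \eqref{3.6}. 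The Bogovski\u{\i} bound \eqref{4.7} together with the energy bound \eqref{4.3} then gives $\phi_\epsilon\in L^p(\Omega;L^\infty(0,T;W^{1,\beta}_0(\mathcal{D})))$ uniformly in $\epsilon$, and $\mathrm{div}\,\phi_\epsilon=\rho_\epsilon-(\rho_\epsilon)_m$ by construction.

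Since $\phi_\epsilon$ is itself a stochastic process (through $\rho_\epsilon$), I cannot simply substitute it into the weak momentum formulation; instead I apply the It\^{o} product formula to $t\mapsto\int_{\mathcal{D}}\rho_\epsilon u_\epsilon\cdot\phi_\epsilon\,dx$. Note that $\rho_\epsilon$ evolves deterministically given $u_\epsilon$ (the noise only enters the momentum equation), so $\phi_\epsilon$ has finite variation in $L^2(\mathcal{D})$ and no cross quadratic variation appears. Using $\partial_t\phi_\epsilon=\mathcal{T}[-\mathrm{div}(\rho_\epsilon u_\epsilon)+\epsilon\triangle\rho_\epsilon]$ (estimated via \eqref{4.8}), substituting $d(\rho_\epsilon u_\epsilon)$ from the momentum equation in \eqref{Equ4.1}, and integrating by parts the pressure term so that $\nabla(\rho_\epsilon^\gamma+\delta\rho_\epsilon^\beta)$ pairs with $\mathrm{div}\,\phi_\epsilon=\rho_\epsilon-(\rho_\epsilon)_m$, I obtain the identity
\[
\int_0^T\!\!\!\int_{\mathcal{D}}\!\!\bigl(\rho_\epsilon^{\gamma+1}+\delta\rho_\epsilon^{\beta+1}\bigr)\,dx\,dt=(\rho_\epsilon)_m\!\!\int_0^T\!\!\!\int_{\mathcal{D}}\!\!\bigl(\rho_\epsilon^\gamma+\delta\rho_\epsilon^\beta\bigr)\,dx\,dt+R_\epsilon,
\]
where $R_\epsilon$ collects the boundary values at $t=0,T$, the convective term $-\int_0^T\!\int\rho_\epsilon u_\epsilon\otimes u_\epsilon:\nabla\phi_\epsilon$, the viscous terms involving $\nabla u_\epsilon$ and $\mathrm{div}\,u_\epsilon$, the $Q$-tensor and active contributions $\int_0^T\!\int(\mathrm{F}(Q_\epsilon)\mathrm{I}_3-\nabla Q_\epsilon\odot\nabla Q_\epsilon+Q_\epsilon\triangle Q_\epsilon-\triangle Q_\epsilon\, Q_\epsilon+\sigma^*c_\epsilon^2 Q_\epsilon):\nabla\phi_\epsilon$, the cross terms $\int_0^T\!\int\rho_\epsilon u_\epsilon\cdot\partial_t\phi_\epsilon$ and $\epsilon\int\nabla\rho_\epsilon\cdot\nabla u_\epsilon\cdot\phi_\epsilon$, and the stochastic martingale $\int_0^T\!\int\rho_\epsilon f(\cdot)\phi_\epsilon\,dx\,d\mathcal{W}$.

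Taking expectation, the stochastic martingale vanishes (predictability and square integrability follow from \eqref{2.1*}, the energy bounds, and the $W^{1,\beta}_0$-bound on $\phi_\epsilon$), and the first term on the right-hand side is controlled by $M$ times the $L^\infty_tL^\beta_x$-bound of \eqref{4.3}. Each contribution in $R_\epsilon$ is then estimated by H\"{o}lder's inequality combined with the energy bounds \eqref{4.2}--\eqref{4.6}, the $L^\infty$-bound on $c_\epsilon$ from Lemma \ref{lem3.3}, the embedding $H^1\hookrightarrow L^6$, and $\|\nabla\phi_\epsilon\|_{L^\beta}\leq C\|\rho_\epsilon\|_{L^\beta}$; the artificial viscosity pieces are absorbed using \eqref{3.8*}. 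The assumption $\beta>\max\{6,\gamma\}$ is exactly what makes the exponent arithmetic close: for example the convective integrand lies in $L^1(0,T;L^{3\beta/(\beta+3)}(\mathcal{D}))$ by \eqref{4.4} and \eqref{4.4*}, which pairs with $\nabla\phi_\epsilon\in L^\infty(0,T;L^\beta(\mathcal{D}))$ since $\beta\geq 3$.

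The principal obstacle, absent from pure compressible Navier--Stokes, is the high-order $Q$-tensor coupling $\nabla\cdot(Q_\epsilon\triangle Q_\epsilon-\triangle Q_\epsilon Q_\epsilon)$, which cannot be handled in negative Sobolev spaces and forces $\phi_\epsilon$ to live in a space stronger than $L^2$. The Bogovski\u{\i} lift into $W^{1,\beta}_0$, made available by $\beta>6$ via \eqref{4.3}, is precisely what is needed: pairing $Q_\epsilon\triangle Q_\epsilon\in L^2(0,T;L^{3/2}(\mathcal{D}))$ against $\nabla\phi_\epsilon\in L^\infty(0,T;L^3(\mathcal{D}))$ closes the estimate. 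Given this, the conclusion follows provided every summand in $R_\epsilon$ can be bounded independently of $\epsilon$, which is a matter of careful but routine bookkeeping with the energy estimates already in hand.
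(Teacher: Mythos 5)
Your proposal is correct and follows essentially the same route as the paper: apply the It\^{o} product formula to $\int_{\mathcal{D}}\rho_\epsilon u_\epsilon\cdot\mathcal{T}[\rho_\epsilon-(\rho_0)_m]\,dx$ (note $(\rho_\epsilon(t))_m=(\rho_0)_m$ is conserved, as you correctly observe), take expectation to annihilate the stochastic integral, and bound the remaining terms using the Bogovski\u{\i} properties \eqref{4.7}--\eqref{4.8}, the energy bounds \eqref{4.2}--\eqref{4.6}, the artificial-viscosity bound \eqref{3.8*}, and H\"{o}lder/Sobolev, with the $Q$-tensor terms handled exactly by pairing against $\nabla\phi_\epsilon\in L^\infty(0,T;L^\beta(\mathcal{D}))$ with $\beta>6$. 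Your remark that $\rho_\epsilon$ is pathwise of bounded variation (so the cross quadratic variation vanishes) is a helpful justification that the paper leaves implicit; otherwise the argument and the exponent bookkeeping match the paper's line-by-line estimates $J_0,\dots,J_{11}$.
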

\begin{proof} The proof is similar to that of \cite{Feireisl}. Applying the It\^{o} formula to function $\Phi(\rho_\epsilon u_\epsilon,\\ \mathcal{T}[\rho_\epsilon-(\rho_0)_m] )=\int_{\mathcal{D}}\rho_\epsilon u_\epsilon\cdot \mathcal{T}[\rho_\epsilon-(\rho_0)_m] dx$,
\begin{eqnarray}\label{4.9}
&&\int_{\mathcal{D}}\rho_\epsilon u_\epsilon\cdot \mathcal{T}[\rho_\epsilon-(\rho_0)_m] dx=\int_{0}^{t}\int_{\mathcal{D}}\rho_{\epsilon}^{\gamma+1}+\delta\rho_{\epsilon}^{\beta+1}dxds+\int_{\mathcal{D}}m_{0}\cdot \mathcal{T}[\rho_0-(\rho_0)_m]dx\nonumber
\\&&\quad\quad-\mu_1\int_{0}^{t}\int_{\mathcal{D}}\nabla u_{\epsilon}:\nabla \mathcal{T}[\rho_\epsilon-(\rho_0)_m]dxds -(\mu_1+\mu_2)\int_{0}^{t}\int_{\mathcal{D}}{\rm div}u_{\epsilon}\cdot [\rho_\epsilon-(\rho_0)_m] dxds\nonumber
\\&&\quad\quad+\int_{0}^{t}\int_{\mathcal{D}}\rho_{\epsilon} u_{\epsilon}\otimes u_{\epsilon}:\nabla \mathcal{T}[\rho_\epsilon-(\rho_0)_m]dxds
-\int_{0}^{t}(\rho_0)_m\int_{\mathcal{D}}\rho_{\epsilon}^{\gamma}
+\delta\rho_{\epsilon}^{\beta}dxds\nonumber \\
&&\quad\quad-\epsilon\int_{0}^{t}\int_{\mathcal{D}}\nabla u_{\epsilon}\nabla \rho_{\epsilon}\cdot\mathcal{T}[\rho_\epsilon-(\rho_0)_m]dx ds-\int_{0}^{t}\int_{\mathcal{D}} \rho_{\epsilon}u_{\epsilon} \cdot \mathcal{T} [{\rm div} (\rho_{\epsilon}u_{\epsilon})]dxds\nonumber \\
&&\quad\quad+\epsilon\int_{0}^{t}\int_{\mathcal{D}} \rho_{\epsilon}u_{\epsilon}\cdot\mathcal{T}[\triangle\rho_{\epsilon}] dxds-\int_{0}^{t}\int_{\mathcal{D}}\sigma^*c_\epsilon^2Q_\epsilon:\nabla\mathcal{T}[\rho_\epsilon-(\rho_0)_m]dxds\nonumber\\
&&\quad\quad-\int_{0}^{t}\int_{\mathcal{D}}(\nabla Q_\epsilon\odot \nabla Q_\epsilon-{\rm F(Q_\epsilon)}{\rm I}_3):\nabla\mathcal{T}[\rho_\epsilon-(\rho_0)_m]dxds\nonumber\\
&&\quad\quad-\int_{0}^{t}\int_{\mathcal{D}}(Q_\epsilon \triangle Q_\epsilon-\triangle Q_\epsilon Q_\epsilon):\nabla\mathcal{T}[\rho_\epsilon-(\rho_0)_m]dxds\nonumber\\
&&\quad\quad+\int_{0}^{t}\int_{\mathcal{D}}\rho_{\epsilon} f(\rho_{\epsilon},\rho_{\epsilon}u_{\epsilon}, c_{\epsilon}, Q_\epsilon)\cdot \mathcal{T}[\rho_\epsilon-(\rho_0)_m] dxd\mathcal{W}\nonumber\\
&&\qquad=\int_{0}^{t}\int_{\mathcal{D}}\rho_{\epsilon}^{\gamma+1}+\delta\rho_{\epsilon}^{\beta+1}dxds
+J_{0}+\int_{0}^{t}J_1+\cdots+J_{10}ds+\int_{0}^{t}J_{11}d\mathcal{W}.
\end{eqnarray}
Taking expectation on both sides of (\ref{4.9}), rearranging and to obtain
\begin{eqnarray}\label{4.10}
&&\mathbb{E}\int_{0}^{t}\int_{\mathcal{D}}\rho_{\epsilon}^{\gamma+1}+\delta\rho_{\epsilon}^{\beta+1}dxds\nonumber\\
&&=-\mathbb{E}\left(J_0+\sum_{i=1}^{10}\int_{0}^{t}J_{i}ds\right)
+\mathbb{E}\int_{\mathcal{D}}\rho_{\epsilon}u_{\epsilon}\cdot \mathcal{T}[\rho_\epsilon-(\rho_0)_m] dx,
\end{eqnarray}
in fact $\mathbb{E}\int_{0}^{t}J_{11}d\mathcal{W}=0$, since the process $\int_{0}^{t}J_{11}d\mathcal{W}$ is a square integrable martingale. Indeed, using condition (\ref{2.1})
\begin{eqnarray*}
&&\mathbb{E}\int_{0}^{t}\sum_{k\geq 1}\left(\int_{\mathcal{D}}\rho_{\epsilon} f(\rho_{\epsilon},\rho_{\epsilon}u_{\epsilon}, c_{\epsilon}, Q_\epsilon)\cdot \mathcal{T}[\rho_\epsilon-(\rho_0)_m]e_kdx\right)^2ds\\
&&\leq C\mathbb{E}\left(\|\mathcal{T}[\rho_\epsilon-(\rho_0)_m]\|_{L^\infty([0,T]\times \mathcal{D})}^2\int_{0}^{t}\int_{\mathcal{D}}\sum_{k\geq 1}\rho_{\epsilon}| f(\rho_{\epsilon},\rho_{\epsilon}u_{\epsilon}, c_{\epsilon}, Q_\epsilon)e_k|^2dx\int_{\mathcal{D}}\rho_\epsilon dxds\right)\\
&&\leq C\mathbb{E}\|\mathcal{T}[\rho_\epsilon-(\rho_0)_m]\|_{L^\infty([0,T]\times \mathcal{D})}^4\\&&\quad+C\mathbb{E}\left(\int_{\mathcal{D}}\rho_\epsilon(0) dx\right)^4\mathbb{E}\left(\int_{0}^{t}\int_{\mathcal{D}}\sum_{k\geq 1}\rho_{\epsilon}| f(\rho_{\epsilon},\rho_{\epsilon}u_{\epsilon}, c_{\epsilon}, Q_\epsilon)e_k|^2dxds\right)^4\\
&&\leq C\mathbb{E}\|\mathcal{T}[\rho_\epsilon-(\rho_0)_m]\|_{L^\infty([0,T]\times \mathcal{D})}^4+C\delta^{-\frac{1}{\beta}}\mathbb{E}\left(\int_{0}^{t }\int_{\mathcal{D}}\rho_\epsilon^\gamma+|\sqrt{\rho_\epsilon} u_\epsilon|^2+c_\epsilon^2+|Q_\epsilon|^2dxds\right)^4\\
&&\leq C(\delta).
\end{eqnarray*}
The desired result follows once each term on the right hand side of (\ref{4.10}) can be controlled. By the H\"{o}lder inequality, (\ref{4.2}) and (\ref{4.8}), obtain
\begin{eqnarray*}
&&\left|-\mathbb{E}\int_{0}^{t}J_{3}+J_6ds\right|\\ &&\leq C\mathbb{E}\int^T_0\|\rho_{\epsilon}u_{\epsilon}\|_{L^\frac{2\beta}{\beta+1}}\|u_\epsilon\|_{L^6}\|\rho_\epsilon-(\rho_0)_m\|_{L^\beta}
+\|\rho_\epsilon\|_{L^\beta}\|u_\epsilon\|_{L^6}\| \mathcal{T} [{\rm div} (\rho_{\epsilon}u_{\epsilon})]\|_{L^\frac{2\beta}{\beta+1}}dt\\
&&\leq C\mathbb{E}\int^T_0\|\rho_{\epsilon}u_{\epsilon}\|_{L^\frac{2\beta}{\beta+1}}\|\nabla u_\epsilon\|_{L^2}\|\rho_\epsilon-(\rho_0)_m\|_{L^\beta}dt\\
&&\leq C\mathbb{E}\left(\sup_{t\in [0,T]}\|\rho_{\epsilon}u_{\epsilon}\|_{L^\frac{2\beta}{\beta+1}}\|\rho_\epsilon-(\rho_0)_m\|_{L^\beta}\int^T_0\|\nabla u_\epsilon\|_{L^2}dt\right)\leq C.
\end{eqnarray*}
Again, using the H\"{o}lder inequality, the Sobolev embedding $H^{1,p}(\mathcal{D})\hookrightarrow L^{\infty}(\mathcal{D})$ for $p>3$ and (\ref{4.7})
\begin{eqnarray*}
&&\left|\mathbb{E}\int_{0}^{t}-J_{5}ds\right|\leq \epsilon C\mathbb{E}\int^T_0\|\nabla u_\epsilon\|_{L^2}\|\nabla \rho_\epsilon\|_{L^2}\|\mathcal{T}[\rho_\epsilon-(\rho_0)_m]\|_{L^\infty}dt\\
&&\qquad\qquad\quad\quad\leq \epsilon C\mathbb{E}\int^T_0\|\nabla u_\epsilon\|_{L^2}\|\nabla \rho_\epsilon\|_{L^2}\|\mathcal{T}(\rho_\epsilon)\|_{H^{1,\beta}}dt\\
&&\qquad\qquad\quad\quad\leq \epsilon C\mathbb{E}\left[\sup_{t\in [0,T]}\|\rho_\epsilon\|_{L^\beta}\right]\mathbb{E}\int^T_0\|\nabla u_\epsilon\|_{L^2}^2dt\mathbb{E}\int^T_0\|\nabla \rho_\epsilon\|_{L^2}^2dt\leq C.
\end{eqnarray*}
Moreover, by the bounds (\ref{4.3}), (\ref{4.4}), (\ref{4.5}), (\ref{4.6}), we have
\begin{align*}
\left|\mathbb{E}\int_{0}^{t}-J_{7}ds\right|&\leq \epsilon C\mathbb{E}\int^T_0\|\nabla u_\epsilon\|_{L^2}\|\rho_\epsilon\|_{L^\beta}\|\nabla \rho_\epsilon\|_{L^2}dt\leq C,\\
\left|\mathbb{E}\int_{0}^{t}-J_{8}ds\right|&\leq C\mathbb{E}\int^T_0\|c_\epsilon\|_{L^6}^2\|Q_\epsilon\|_{L^6}\|\nabla \mathcal{T}[\rho_\epsilon-(\rho_0)_m]\|_{L^2}dt\\
&\leq C \mathbb{E}\left[\sup_{t\in[0,T]}(\|Q_\epsilon\|_{L^6}^4+\|\rho_\epsilon-(\rho_0)_m\|^4_{L^2})\right] \mathbb{E}\left(\int^T_0\|c_\epsilon\|_{L^6}^2dt\right)^2\leq C,\\
\left|\mathbb{E}\int_{0}^{t}-J_{9}ds\right|&\leq C\mathbb{E}\int_0^T\|\nabla Q_\epsilon\|_{L^\frac{10}{3}}^2\|\nabla \mathcal{T}[\rho_\epsilon-(\rho_0)_m]\|_{L^\frac{5}{2}}+(\|Q_\epsilon\|_{L^3}^2\\&\quad+\|Q_\epsilon\|_{L^{6}}^4)\|\rho_\epsilon-(\rho_0)_m\|_{L^3}dt\\
&\leq C\mathbb{E}\int_0^T\| Q_\epsilon\|_{H^2}^2\|\rho_\epsilon-(\rho_0)_m\|_{L^\beta}+(1+\|Q_\epsilon\|_{L^{6}}^4)\|\rho_\epsilon-(\rho_0)_m\|_{L^\beta}dt\\
&\leq C\mathbb{E}\left[\sup_{t\in[0,T]}\|\rho_\epsilon-(\rho_0)_m\|^2_{L^\beta}\right]\mathbb{E}\left(\int_0^T\|\triangle Q_\epsilon\|_{L^2}^2+(1+\|Q_\epsilon\|_{L^{6}}^4)dt\right)^2\\&\leq C,\\
\left|\mathbb{E}\int_{0}^{t}-J_{10}ds\right|&\leq C\mathbb{E}\int_{0}^{T}\|Q_\epsilon\|_{L^4}\|\triangle Q_\epsilon\|_{L^2}\|\nabla \mathcal{T}[\rho_\epsilon-(\rho_0)_m]\|_{L^\beta}dt\\
&\leq C\mathbb{E}\left[\sup_{t\in [0,T]}\|\rho_\epsilon-(\rho_0)_m\|_{L^\beta}^4\right]\mathbb{E}\left[\sup_{t\in [0,T]}\|Q_\epsilon\|_{H^1}^4\right]\mathbb{E}\left(\int_{0}^{T}\|\triangle Q_\epsilon\|_{L^2}^2dt\right)^2\\
&\leq C,
\end{align*}
where $C$ is independent of $\epsilon$. The proof is complete.
\end{proof}

\subsection{Compactness argument}

In order to acquire the compactness of the approximate sequence, we implement the same procedures as the first level approximation.  Define the path space
\begin{eqnarray*}
\mathcal{X}=\mathcal{X}_{u}\times\mathcal{X}_{\rho}\times\mathcal{X}_{\rho u}\times\mathcal{X}_{c}\times\mathcal{X}_{Q}\times\mathcal{X}_{\mathcal{W}},
\end{eqnarray*}
where
\begin{eqnarray*}
&&\mathcal{X}_{\rho}:=L^\infty(0,T; H^{-\frac{1}{2}}(\mathcal{D}))\cap L_w^{\beta+1}(0,T; L^{\beta+1}(\mathcal{D})),
\end{eqnarray*}
and $\mathcal{X}_{u}, \mathcal{X}_{\rho u},  \mathcal{X}_{c}, \mathcal{X}_{Q}, \mathcal{X}_{\mathcal{W}}$ are same as the definition in subsection 3.2. Let $\widetilde{\mathcal{X}}=\mathcal{X}\times L_w^{\frac{\beta+1}{\beta}}((0,T)\times\mathcal{D})$. The set of probability measures $\{\nu^\epsilon\}_{\epsilon>0}$ is constructed similarly to (\ref{3.17}). We also have the following result.
\begin{proposition}\label{pro4.1} There exists a new probability space $(\widetilde{\Omega}, \widetilde{\mathcal{F}}, \widetilde{\mathbb{P}})$, a subsequence $\{\nu^{\epsilon_{k}}\}_{k\geq 1}$ {\rm (still denote by $\epsilon$)} and $\widetilde{\mathcal{X}}$-valued measurable random variables
\begin{eqnarray*}
(\tilde{u}_{\epsilon},\tilde{\rho}_{\epsilon},\tilde{\rho}_{\epsilon}\tilde{u}_{\epsilon}, \tilde{\rho}_{\epsilon}^\gamma+\delta\tilde{\rho}_{\epsilon}^{\beta}, \tilde{c}_{\epsilon}, \widetilde{Q}_{\epsilon},  \widetilde{\mathcal{W}}_{\epsilon})~~ {\rm and}~~
(\tilde{u},\tilde{\rho}, \tilde{\rho}\tilde{u}, \overline{\tilde{\rho}^\gamma}+\delta\overline{\tilde{\rho}^{\beta}}, \tilde{c}, \widetilde{Q}, \widetilde{\mathcal{W}}),
\end{eqnarray*}
such that
\begin{eqnarray}\label{4.11}&&(\tilde{u}_{\epsilon},\tilde{\rho}_{\epsilon},\tilde{\rho}_{\epsilon}\tilde{u}_{\epsilon}, \tilde{c}_{\epsilon}, \widetilde{Q}_{\epsilon}, \widetilde{\mathcal{W}}_{\epsilon})
\rightarrow(\tilde{u},\tilde{\rho}, \tilde{\rho}\tilde{u}, \tilde{c}, \widetilde{Q}, \widetilde{\mathcal{W}}), ~\widetilde{\mathbb{P}}~\mbox{a.s.}
\end{eqnarray}
in the topology of $\mathcal{X}$, moreover, we have $\widetilde{\mathbb{P}}$ a.s.
\begin{eqnarray}\label{4.12}
\tilde{\rho}_{\epsilon}^\gamma+\delta\tilde{\rho}_{\epsilon}^{\beta}\rightarrow \overline{\tilde{\rho}^\gamma}+\delta\overline{\tilde{\rho}^{\beta}}~{\rm in}~L_w^{\frac{\beta+1}{\beta}}((0,T)\times\mathcal{D} ),
\end{eqnarray}
and
\begin{eqnarray*}
&&\widetilde{\mathbb{P}}\left\{(\tilde{u}_{\epsilon},\tilde{\rho}_{\epsilon},\tilde{\rho}_{\epsilon}\tilde{u}_{\epsilon}, \tilde{\rho}_{\epsilon}^\gamma+\delta\tilde{\rho}_{\epsilon}^{\beta}, \tilde{c}_{\epsilon}, \widetilde{Q}_{\epsilon}, \widetilde{\mathcal{W}}_{\epsilon})\in \cdot\right\}=\nu^{\epsilon}(\cdot),\\
&&\widetilde{\mathbb{P}}\left\{(\tilde{u},\tilde{\rho}, \tilde{\rho}\tilde{u}, \overline{\tilde{\rho}^\gamma}+\delta\overline{\tilde{\rho}^{\beta}}, \tilde{c}, \widetilde{Q}, \widetilde{\mathcal{W}})\in \cdot\right\}=\nu(\cdot),
\end{eqnarray*}
where $\nu$ is a Radon measure and $\widetilde{\mathcal{\mathcal{W}}}_{\epsilon}$ is cylindrical Wiener process, relative to the filtration $\widetilde{\mathcal{F}}_{t}^{\epsilon}$ generated by the completion of $\sigma(\tilde{u}_{\epsilon}(s),\tilde{\rho}_{\epsilon}(s), \tilde{c}_\epsilon(s), \widetilde{Q}_\epsilon(s), \widetilde{\mathcal{W}}_{\epsilon}(s);s\leq t)$.
In addition,
\begin{align}
&\tilde{\rho}_{\epsilon}{\rm div} \tilde{u}_{\epsilon}\rightarrow \overline{{\tilde{\rho}_{\epsilon}{\rm div} \tilde{u}_{\epsilon}}}~{\rm weak ly~in}~L^p(\widetilde{\Omega}; L^2(0,T;L^\frac{2\beta}{\beta+2})),\label{4.14*}\\
&\tilde{\rho}_{\epsilon}\ln \tilde{\rho}_{\epsilon}\rightarrow\overline{\tilde{\rho}_{\epsilon}\ln \tilde{\rho}_{\epsilon}}~{\rm weakly ~star~in}~L^p(\widetilde{\Omega}; L^2(0,T;L^\frac{2\beta}{\beta+2})).\label{4.15*}
\end{align}
Furthermore, the process $(\tilde{u}_{\epsilon},\tilde{\rho}_{\epsilon},\tilde{\rho}_{\epsilon}\tilde{u}_{\epsilon}, \tilde{c}_{\epsilon}, \widetilde{Q}_{\epsilon}, \widetilde{\mathcal{W}}_{\epsilon})$ also satisfies the system (\ref{Equ4.1}) and shares the uniform bounds with (\ref{4.2})-(\ref{4.6}).
\end{proposition}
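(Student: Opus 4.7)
The strategy mirrors that of Proposition \ref{pro3.1}: first establish tightness of the laws $\{\nu^\epsilon\}_{\epsilon>0}$ on the enlarged path space $\widetilde{\mathcal{X}}$, then invoke the Skorokhod-Jakubowski theorem to obtain a new stochastic basis together with $\widetilde{\mathbb{P}}$-a.s.\ convergent copies, and finally pass to the limit in the approximate momentum, continuity, concentration, and $Q$-equations. The essential new input compared with Section 3 is the improved integrability bound of Lemma \ref{lem4.1}, which controls $\tilde{\rho}_\epsilon^{\gamma+1}+\delta \tilde{\rho}_\epsilon^{\beta+1}$ in $L^1(\widetilde{\Omega}\times(0,T)\times \mathcal{D})$ uniformly in $\epsilon$, and which is what allows the pressure $\tilde{\rho}_\epsilon^\gamma+\delta \tilde{\rho}_\epsilon^\beta$ to be identified as a weak limit in $L_w^{(\beta+1)/\beta}((0,T)\times\mathcal{D})$.

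For tightness I would follow the four-claim layout of Lemma \ref{lem3.4}. Tightness on $\mathcal{X}_u$ is immediate from \eqref{4.4} by Banach-Alaoglu, and tightness on $\mathcal{X}_c,\mathcal{X}_Q$ proceeds exactly as in Claims 2-3 using \eqref{4.5}-\eqref{4.6}. Tightness on $\mathcal{X}_{\rho u}$ is obtained via the same decomposition as in Claim 1: each deterministic term on the right-hand side of the momentum equation lies in $L^p(\widetilde{\Omega}; L^{q}(0,T; H^{-k}(\mathcal{D})))$ for suitable $k\geq 5/2$ by \eqref{4.2}-\eqref{4.6}, with the pressure contribution $\nabla(\tilde{\rho}_\epsilon^\gamma+\delta \tilde{\rho}_\epsilon^\beta)$ now controlled through Lemma \ref{lem4.1} (landing in $L^{(\beta+1)/\beta}(0,T; W^{-1,(\beta+1)/\beta}(\mathcal{D})) \hookrightarrow L^{(\beta+1)/\beta}(0,T; H^{-k}(\mathcal{D}))$), and the stochastic integral is H\"older-$\alpha$ continuous in $H^{-k}$ via Burkholder-Davis-Gundy and \eqref{2.1*}, exactly as in \eqref{3.27}. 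For $\mathcal{X}_\rho = L^\infty(0,T;H^{-1/2}(\mathcal{D}))\cap L_w^{\beta+1}(0,T;L^{\beta+1}(\mathcal{D}))$, the weak $L^{\beta+1}$ tightness is immediate from Lemma \ref{lem4.1} and Banach-Alaoglu, while tightness in $L^\infty(0,T;H^{-1/2}(\mathcal{D}))$ follows from the embedding $L^\beta(\mathcal{D})\hookrightarrow H^{-1/2}(\mathcal{D})$ (immediate since $\beta>6$) combined with a H\"older-in-time estimate on $\tilde{\rho}_\epsilon$ in a negative Sobolev space derived from the continuity equation $\partial_t\tilde{\rho}_\epsilon = -{\rm div}(\tilde{\rho}_\epsilon\tilde{u}_\epsilon)+\epsilon\Delta \tilde{\rho}_\epsilon$. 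Tightness of the pressure law in $L_w^{(\beta+1)/\beta}((0,T)\times\mathcal{D})$ is again a direct consequence of Lemma \ref{lem4.1} and Banach-Alaoglu.

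With tightness in place, the Skorokhod-Jakubowski theorem furnishes $(\widetilde{\Omega},\widetilde{\mathcal{F}},\widetilde{\mathbb{P}})$ together with the claimed random variables converging $\widetilde{\mathbb{P}}$-a.s.\ in the topology of $\widetilde{\mathcal{X}}$. The identification of the momentum limit as $\tilde{\rho}\tilde{u}$, the verification that $\widetilde{\mathcal{W}}_\epsilon$ is a cylindrical Wiener process relative to the natural filtration, and the fact that each tilded process still satisfies the system at level $\epsilon$ proceed exactly as at the end of Subsection 3.3, through the martingale identification argument that uses \eqref{2.2*} to pass to the limit in the noise. The auxiliary weak convergences \eqref{4.14*}-\eqref{4.15*} come from H\"older applied to \eqref{4.3}-\eqref{4.4} (placing $\tilde{\rho}_\epsilon\,{\rm div}\tilde{u}_\epsilon$ and $\tilde{\rho}_\epsilon\ln\tilde{\rho}_\epsilon$ uniformly in $L^p(\widetilde{\Omega}; L^2(0,T; L^{2\beta/(\beta+2)}(\mathcal{D})))$) together with Banach-Alaoglu, and the uniform bounds \eqref{4.2}-\eqref{4.6} transfer to the limiting variables by weak/weak-$*$ lower semicontinuity of norms.

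The main obstacle, and the reason for the overlined notation $\overline{\tilde{\rho}^\gamma}, \overline{\tilde{\rho}^\beta}$, is that at this level no analogue of the $\sqrt{\epsilon}\nabla\tilde{\rho}_\epsilon \in L^2(0,T;L^2(\mathcal{D}))$ bound is strong enough to yield strong compactness of $\tilde{\rho}_\epsilon$ in $L^2((0,T)\times\mathcal{D})$; only weak convergence in $L^{\beta+1}$ is available here, so nonlinear functions of the density cannot be passed through the limit pointwise. The weak limits in \eqref{4.14*}-\eqref{4.15*} must therefore likewise be recorded with overlines. Removing these overlines, i.e.\ proving that $\overline{\tilde{\rho}^\gamma}=\tilde{\rho}^\gamma$ and $\overline{\tilde{\rho}^\beta}=\tilde{\rho}^\beta$, is exactly the content of the effective viscous flux analysis and the Minty trick in the remainder of Section 4 (where the symmetry of the $Q$-tensor plays the crucial role advertised in the introduction), and lies beyond the scope of this compactness statement.
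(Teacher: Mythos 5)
Your general outline — tightness on the enlarged path space, Skorokhod--Jakubowski, martingale identification — matches the paper's proof, and your treatment of $\mathcal{X}_u, \mathcal{X}_c, \mathcal{X}_Q$, the pressure law in $L_w^{(\beta+1)/\beta}$, and the weak convergences \eqref{4.14*}--\eqref{4.15*} is essentially what the paper does. However, there is a genuine gap in your tightness argument on $\mathcal{X}_{\rho u}$: you do not address the term $Y_\epsilon=\epsilon\int_0^t\nabla\rho_\epsilon\cdot\nabla u_\epsilon\,ds$, and the phrase ``the same decomposition as in Claim~1'' is misleading here. In Claim~1 of Lemma~\ref{lem3.4} the uniform H\"older bound for the analogous term $Y_n$ was obtained by first upgrading $\rho_n$ to $L^q(0,T;H^1(\mathcal{D}))$ for some $q>2$ via the parabolic regularization $\epsilon\Delta\rho$; that argument is intrinsically $\epsilon$-dependent and the resulting constant blows up as $\epsilon\to 0$. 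The only uniform control available here is $\sqrt{\epsilon}\,\nabla\rho_\epsilon\in L^2_{t,x}$, which puts $\epsilon\nabla\rho_\epsilon\cdot\nabla u_\epsilon$ only in $L^1(0,T;L^1(\mathcal{D}))$ — not enough for a H\"older-in-time estimate, so $Y_\epsilon$ cannot be absorbed into the list of ``deterministic terms in $L^p(\widetilde{\Omega};L^q(0,T;H^{-k}))$''.

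The paper's proof handles this with a different mechanism: from the energy bound one gets
$\int_0^T\int_{\mathcal{D}}|\epsilon\nabla\rho_\epsilon\cdot\nabla u_\epsilon|\,dx\,dt\leq C\sqrt{\epsilon}\to 0$, hence $Y_\epsilon\to 0$ in $C([0,T];L^1(\mathcal{D}))$ $\mathbb{P}$-a.s. Almost-sure convergence implies convergence in law, so the family $\{\mathbb{P}\circ Y_\epsilon^{-1}\}_\epsilon$ is tight on $C([0,T];H^{-k}(\mathcal{D}))$, and one then intersects with a Minkowski sum $\widetilde{\mathcal{K}}=\mathcal{K}_1\cap(\mathcal{K}_2+\mathcal{K})$, where $\mathcal{K}_2$ captures the H\"older-bounded part $X_\epsilon$ and $\mathcal{K}$ is the compact set furnished by the tightness of $Y_\epsilon$, to obtain a relatively compact set in $C([0,T];H^{-1}(\mathcal{D}))$ for the full momentum $\rho_\epsilon u_\epsilon = X_\epsilon + Y_\epsilon$. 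Incorporating this step closes the gap; the rest of your argument stands.
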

\begin{lemma}\label{lem4.2}The sequence of probability measures $\{\nu^{\epsilon}\}_{\epsilon>0}$ is tight on path space $\widetilde{\mathcal{X}}$.
\end{lemma}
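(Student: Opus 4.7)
The plan is to mirror the structure of Lemma \ref{lem3.4}, handling each factor of $\widetilde{\mathcal{X}}$ separately and then invoking the fact that tightness of a product follows from tightness on each coordinate. The bounds \eqref{4.2}--\eqref{4.6} together with Lemma \ref{lem4.1} (improved integrability of the density) will do most of the work. For the factors $\mathcal{X}_u$, $\mathcal{X}_c$, $\mathcal{X}_Q$ and $\mathcal{X}_{\mathcal{W}}$ essentially the same arguments as in Claims 2--4 of Lemma \ref{lem3.4} apply verbatim: the weak-topology parts follow from Banach--Alaoglu applied to the uniform bounds \eqref{4.4}--\eqref{4.6}, while the strong-topology parts follow from Aubin--Lions applied to the time derivatives extracted from equations $(\ref{Equ4.1})_1$ and $(\ref{Equ4.1})_4$, which are uniformly bounded in $L^2(0,T;H^{-1}(\mathcal{D}))$ and $L^2(0,T;L^{3/2}(\mathcal{D}))$ respectively. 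Tightness of $\nu_{\mathcal{W}}$ on $C([0,T];\mathcal{H}_0)$ is standard and independent of $\epsilon$.

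The essential new work lies in the momentum factor $\mathcal{X}_{\rho u}$ and the density/pressure factors. For $\mathcal{X}_{\rho u}=C([0,T];H^{-1}(\mathcal{D}))$, I would repeat the decomposition of Claim 1 of Lemma \ref{lem3.4}, writing $\rho_\epsilon u_\epsilon$ as the time integral of its right-hand side in equation $(\ref{Equ4.1})_3$ plus the stochastic integral. The convective, viscous, $Q$-stress, active and noise terms are handled exactly as before via \eqref{4.2}--\eqref{4.6}, \eqref{2.1} and the Burkholder--Davis--Gundy inequality, yielding a uniform $L^p(\Omega;C^\alpha([0,T];H^{-k}))$ estimate for $\alpha<\tfrac12$ and suitable $k\geq \tfrac52$. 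The pressure contribution $\nabla(\rho_\epsilon^\gamma+\delta\rho_\epsilon^\beta)$, which was the dangerous one at this level, is now under control precisely thanks to Lemma \ref{lem4.1} giving a bound in $L^{(\beta+1)/\beta}(0,T;H^{-1,(\beta+1)/\beta}(\mathcal{D}))$. The vanishing-viscosity term $\epsilon\nabla\rho_\epsilon\cdot\nabla u_\epsilon$ is the most delicate piece, but an argument analogous to \eqref{3.29} (improved time integrability of $\rho_\epsilon$ via the interpolation Lemma \ref{lem6.2} applied to $\rho_\epsilon u_\epsilon$) together with the $\epsilon$-uniform bounds provides equicontinuity. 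Combined with the uniform pointwise-in-time bound \eqref{4.2} and the compact embedding $L^{2\beta/(\beta+1)}(\mathcal{D})\hookrightarrow\hookrightarrow H^{-1}(\mathcal{D})$, the Aubin--Lions lemma plus Chebyshev produce tightness.

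For $\mathcal{X}_\rho = L^\infty(0,T;H^{-1/2}(\mathcal{D}))\cap L_w^{\beta+1}(0,T;L^{\beta+1}(\mathcal{D}))$, the weak-topology component is immediate from Lemma \ref{lem4.1}: the balls of $L^{\beta+1}((0,T)\times\mathcal{D})$ are relatively compact in the weak topology, so Chebyshev's inequality applied to $\mathbb{E}\|\rho_\epsilon\|_{L^{\beta+1}}^{\beta+1}\leq C$ closes this step. For the $L^\infty H^{-1/2}$ part, the continuity equation $\partial_t\rho_\epsilon=-\mathrm{div}(\rho_\epsilon u_\epsilon)+\epsilon\Delta\rho_\epsilon$ combined with \eqref{4.2} and \eqref{3.8*} yields a uniform bound on $\partial_t\rho_\epsilon$ in $L^q(0,T;H^{-k}(\mathcal{D}))$ for some $q>1$ and $k$ sufficiently large, which gives equicontinuity in a weak space; interpolating with the spatial bound $\rho_\epsilon\in L^\infty(0,T;L^\beta(\mathcal{D}))\hookrightarrow\hookrightarrow L^\infty(0,T;H^{-1/2}(\mathcal{D}))$ and invoking Arzel\`a--Ascoli provides the required compact sets.

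Finally, for the pressure factor $L_w^{(\beta+1)/\beta}((0,T)\times\mathcal{D})$, Lemma \ref{lem4.1} directly bounds $\mathbb{E}\|\rho_\epsilon^\gamma+\delta\rho_\epsilon^\beta\|_{L^{(\beta+1)/\beta}((0,T)\times\mathcal{D})}$ uniformly in $\epsilon$ (using $\gamma\leq\beta$), so weak compactness and Chebyshev again supply tightness. Collecting all six factors yields the tightness of $\{\nu^\epsilon\}$ on $\widetilde{\mathcal{X}}$. The main obstacle will be organizing the $\epsilon\nabla\rho_\epsilon\cdot\nabla u_\epsilon$ estimate in the momentum equation and the propagation of the improved density integrability into equicontinuity for $\rho_\epsilon$ at the level of the path space $L^\infty(0,T;H^{-1/2}(\mathcal{D}))$; everything else is a routine adaptation of Lemma \ref{lem3.4}.
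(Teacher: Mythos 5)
Your treatment of the factors $\mathcal{X}_u,\mathcal{X}_c,\mathcal{X}_Q,\mathcal{X}_{\mathcal{W}}$, the pressure factor $L_w^{(\beta+1)/\beta}$, and the weak parts of $\mathcal{X}_\rho$ matches the paper (Banach--Alaoglu plus Chebyshev using Lemma \ref{lem4.1}, Claims 2--4 of Lemma \ref{lem3.4}). The problem is your plan for the vanishing-viscosity term $Y_\epsilon=\epsilon\int_0^t\nabla\rho_\epsilon\cdot\nabla u_\epsilon\,ds$. You propose to reproduce an equicontinuity estimate ``analogous to \eqref{3.29},'' which in Section 3 relied on $\rho_n\in L^p(\Omega;L^q(0,T;H^1(\mathcal{D})))$ for some $q>2$ from \eqref{3.31*}. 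That bound comes from parabolic regularity of the continuity equation with diffusion coefficient $\epsilon$ and is \emph{not} uniform in $\epsilon$; as $\epsilon\to0$ the smoothing degenerates, and all one retains is $\sqrt{\epsilon}\,\rho_\epsilon\in L^p(\Omega;L^2(0,T;H^1))$ from \eqref{3.8*}. With only this, Cauchy--Schwarz gives $\|Y_\epsilon(t)-Y_\epsilon(t')\|_{L^1}\le \sqrt{\epsilon}\,(\int_0^T\epsilon\|\nabla\rho_\epsilon\|_{L^2}^2)^{1/2}(\int_0^T\|\nabla u_\epsilon\|_{L^2}^2)^{1/2}$, i.e.\ an $O(\sqrt{\epsilon})$ bound on the oscillation but \emph{no} H\"older modulus; so the H\"older/Aubin--Lions strategy you outline does not close uniformly in $\epsilon$.

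The paper uses this $O(\sqrt{\epsilon})$ smallness in a fundamentally different way: it shows $Y_\epsilon\to 0$ in $C([0,T];L^1(\mathcal{D}))$ $\mathbb{P}$-a.s.\ (hence in distribution), and since the embedding $L^1(\mathcal{D})\hookrightarrow H^{-k}(\mathcal{D})$ is compact for $k>3/2$, the laws of $Y_\epsilon$ are tight on $C([0,T];H^{-k})$ by Prokhorov (a sequence converging in distribution is tight). Tightness of $\rho_\epsilon u_\epsilon = X_\epsilon + Y_\epsilon$ then follows by combining the $C^\alpha$-equicontinuity bound for $X_\epsilon$ (where all terms are uniform in $\epsilon$) with the tight laws of $Y_\epsilon$ via the decomposition $\widetilde{\mathcal{K}}=\mathcal{K}_1\cap(\mathcal{K}_2+\mathcal{K})$. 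You should replace your equicontinuity plan for $Y_\epsilon$ with this ``smallness implies convergence in distribution implies tightness'' argument; the rest of your proof is fine.
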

\begin{proof} In order not to repeat the trivial procedures, we mainly show limited parts that are different from the Lemma \ref{lem3.4}.  Decompose $\rho_{\epsilon}u_{\epsilon}=X^{\epsilon}+Y^{\epsilon}$, where
\begin{eqnarray*}
&&X_\epsilon=m_{0}+\int_{0}^{t}-{\rm div}(\rho_\epsilon u_\epsilon\otimes u_\epsilon)-\nabla (\rho^\gamma_\epsilon+\delta\rho_\epsilon^\beta)
 +\mu_1\Delta u_\epsilon+(\mu_1+\mu_2)\nabla ({\rm div} u_\epsilon)\\&&\qquad\qquad+\nabla\cdot({\rm F}(Q_\epsilon){\rm I}_3-\nabla Q_\epsilon\odot \nabla Q_\epsilon)+\nabla\cdot(Q_\epsilon\triangle Q_\epsilon-\triangle Q_\epsilon Q_\epsilon)+\sigma^*\nabla\cdot(c_\epsilon^2Q_\epsilon)ds\\ &&\qquad\qquad+\int_{0}^{t}\rho_\epsilon f(\rho_\epsilon,\rho_\epsilon u_\epsilon, c_\epsilon, Q_\epsilon)d\mathcal{W},
\end{eqnarray*}
and
\begin{eqnarray*}
Y_\epsilon=\epsilon\int_{0}^{t}\nabla \rho_\epsilon\cdot\nabla u_\epsilon ds.
\end{eqnarray*}

For the process $X_{\epsilon}$, one can treat it by the same argument as Lemma \ref{lem3.4} Claim 1, obtaining $\mathbb{E}\|X_{\epsilon}(t)\|_{C^{\alpha}([0,T]; H^{-k}(\mathcal{D}))}\leq C$ for any $\alpha\in [0,\frac{1}{2}), k\geq\frac{5}{2}$.

For process $Y_{\epsilon}$, Using the bound (\ref{4.4}),
we have $\mathbb{P}~\mbox{a.s.}$
\begin{align*}
\int_{0}^{t}\int_{\mathcal{D}}|\epsilon\nabla\rho_{\epsilon}\cdot\nabla u_{\epsilon}|dxds&\leq \sqrt{\epsilon}\left(\int_{0}^t\|\nabla u_{\epsilon}\|_{L^2}^2ds\right)^{\frac{1}{2}}\left(\int_{0}^{t}\|\sqrt{\epsilon}\nabla \rho_{\epsilon}\|_{L^2}^2ds\right)^{\frac{1}{2}}\\
&\leq C\sqrt{\epsilon}\rightarrow 0,~ {\rm as}~ \epsilon\rightarrow 0,
\end{align*}
which leads to $Y_{\epsilon}\rightarrow 0 ~{\rm in} ~C([0,T];L^{1}(\mathcal{D})),~ \mathbb{P}~\mbox{a.s.}$ Note that, the convergence a.s. implies the convergence in distribution, therefore, we have
\begin{eqnarray*}
Y_{\epsilon}\rightarrow 0,~ {\rm in} ~C([0,T];L^{1}(\mathcal{D})),
\end{eqnarray*}
in the sense of distribution. The Sobolev compactness embedding $L^{1}(\mathcal{D})\hookrightarrow H^{-k}(\mathcal{D})$ for $k>\frac{3}{2}$, implies that there exists a compact set $\mathcal{K}\subset C([0,T];H^{-k}(\mathcal{D}))$ such that $\mathbb{P}(Y_{\epsilon}\subset \mathcal{K}^{\mathfrak{c}})<\epsilon$. We obtain the law of set $\{\mathbb{P}\circ(Y_{\epsilon})^{-1}\}$ is tight on space $C([0,T];H^{-k}(\mathcal{D}))$.

Define the set $\widetilde{\mathcal{K}}=\mathcal{K}_{1}\cap(\mathcal{K}_{2}+\mathcal{K})$, where
\begin{eqnarray*}
&&\mathcal{K}_{1}:=\left\{\varphi\in L^{\infty}(0,T;L^{\frac{2\beta}{\beta+1}}(\mathcal{D})): \|\varphi\|_{L^{\infty}(0,T;L^{\frac{2\beta}{\beta+1}}(\mathcal{D}))}\leq R\right\},\\
&&\mathcal{K}_{2}:=\left\{\varphi\in C^{\alpha}([0,T];H^{-k}(\mathcal{D})): \|\varphi\|_{C^{\alpha}([0,T];H^{-k}(\mathcal{D}))}\leq R\right\}.
\end{eqnarray*}
Then, by the Aubin-Lions lemma \ref{lem6.1}, the set $\mathcal{K}$ is relatively compact in $C([0,T];H^{-1}(\mathcal{D}))$. Using the bound (\ref{4.2}) and the Chebyshev inequality, to conclude
\begin{eqnarray*}
&&\mu_{\rho u}^{\epsilon}(\widetilde{\mathcal{K}}^{\mathfrak{c}})\leq \mathbb{P}\left(\|\rho_{\epsilon}u_{\epsilon}\|_{L^{\infty}(0,T;L^{\frac{2\beta}{\beta+1}}(\mathcal{D}))}>R\right)
+\mathbb{P}\left(\|X_{\epsilon}\|_{C^{\alpha}([0,T];H^{-k}(\mathcal{D}))}>R\right)\\
&&\qquad\qquad+\mathbb{P}(Y_{\epsilon}\subset \mathcal{K}^{\mathfrak{c}})
\leq \frac{C}{R}+\epsilon.
\end{eqnarray*}
Thus, we obtain the tightness of $\{\mu_{\rho u}^{\epsilon}\}_{\epsilon>0}$ on path space $\mathcal{X}_{\rho u}$.

The tightness of $\{\nu_{\rho}^{\epsilon}\}_{\epsilon>0}$ on path space $L_w^{\beta+1}(0,T; L^{\beta+1}(\mathcal{D}))$ and $\mathbb{P}\circ (\tilde{\rho}_{\epsilon}^\gamma+\delta\tilde{\rho}_{\epsilon}^{\beta})^{-1}$ on path space $L_w^{\frac{\beta+1}{\beta}}([0,T]\times\mathcal{D})$ is a result of the bound (\ref{4.3}) and the Banach-Alagolu theorem using the same argument as Lemma \ref{lem3.4} Claim 2.
\end{proof}

\textit{Proof of Proposition 4.1.} The proofs follow the same manner as Proposition \ref{pro3.1}.  {The convergence results} (\ref{4.14*}), \eqref{4.15*} follows from the bounds \eqref{4.2}-\eqref{4.4*} after using the Banach-Alagolu theorem, see also \cite[Proposition 6.3]{16}.

\subsection{Taking the limit for $\epsilon\rightarrow 0$}
Now, we can pass to the limit $\epsilon\rightarrow 0$ for fixed $\delta$, to obtain:

\begin{proposition} For all $\ell\in C^{\infty}(\mathcal{D}), \phi\in C^{\infty}(\mathcal{D}), \varphi\in C^{\infty}(\mathcal{D}), \psi\in C^{\infty}(\mathcal{D})$, $t\in [0,T]$, there exist pressure $\overline{\tilde{\rho}^\gamma}+\delta\overline{\tilde{\rho}^{\beta}}$ and an $L_2(\mathcal{H};H^{-l})$-valued martingale $\widetilde{W}$ such that the process $(\tilde{\rho},\tilde{u},\tilde{c}, \widetilde{Q}, \widetilde{W})$ satisfies equations, $\widetilde{\mathbb{P}}$~ \mbox{a.s.}
\begin{eqnarray*}
&&\int_{\mathcal{D}}\tilde{c}(t)\ell dx=\int_{\mathcal{D}}\tilde{c}(0)\ell dx-\int_{0}^{t}\int_{\mathcal{D}} (\tilde{u}\cdot\nabla)\tilde{c} \cdot\ell dxds-\int_{0}^{t}\int_{\mathcal{D}} \nabla \tilde{c} \cdot\nabla\ell dxds,\nonumber\\
&&\int_{\mathcal{D}}\tilde{\rho}(t)\psi dx=\int_{\mathcal{D}}\tilde{\rho}(0)\psi dx+\int_0^t\int_{\mathcal{D}}\tilde{\rho}\tilde{u}\cdot\nabla \psi dxds,\nonumber\\
&&\int_{\mathcal{D}}\tilde{\rho} \tilde{u}(t)\phi dx=\int_{\mathcal{D}}\tilde{m}(0)\phi dx+\int_0^t\int_{\mathcal{D}}(\tilde{\rho}\tilde{u}\otimes \tilde{u})\cdot\nabla \phi dxds-\int_0^t\int_{\mathcal{D}}\mu_1\nabla \tilde{u}\nabla \phi dxds\nonumber\\
&&\qquad\qquad\qquad-\int_0^t\int_{\mathcal{D}}(\mu_1+\mu_2){\rm div} \tilde{u}{\rm div}\phi dxds+\int_0^t\int_{\mathcal{D}}\left( \overline{\tilde{\rho}^{\gamma}}+\delta \overline{\tilde{\rho}^{\beta}}\right){\rm div}\phi dxds\nonumber\\
&&\qquad\qquad\qquad -\int_0^t\int_{\mathcal{D}}\bigg(({\rm F}(\widetilde{Q}){\rm I}_3-\nabla \widetilde{Q}\odot \nabla \widetilde{Q})+(\widetilde{Q}\triangle \widetilde{Q}-\triangle \widetilde{Q} \widetilde{Q})\nonumber\\&&\qquad\qquad\qquad\quad+\sigma^*(\tilde{c}^2\widetilde{Q})\bigg)\nabla\phi dxds
+\int_{\mathcal{D}}\phi\widetilde{W}dx,\\
&&\int_{\mathcal{D}}\widetilde{Q }(t)\varphi dx=\int_{\mathcal{D}}\widetilde{Q}(0)\varphi dx-\int_{0}^{t}\int_{\mathcal{D}}((\tilde{u}\cdot \nabla)\widetilde{Q}+\widetilde{Q}\widetilde{\Psi}-\widetilde{\Psi} \widetilde{Q})\varphi dxds\nonumber\\
&&\qquad\qquad\qquad+\int_{0}^{t}\int_{\mathcal{D}}\Gamma \varphi{\rm H}(\widetilde{Q},\tilde{c})dxds.
\end{eqnarray*}
\end{proposition}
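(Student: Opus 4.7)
The plan is to pass to the limit $\epsilon\to 0$ in the weak formulation of the approximate system (\ref{Equ4.1}) satisfied by $(\tilde{\rho}_\epsilon,\tilde{u}_\epsilon,\tilde{c}_\epsilon,\widetilde{Q}_\epsilon,\widetilde{\mathcal{W}}_\epsilon)$, using the almost sure and weak convergences delivered by Proposition \ref{pro4.1} together with the uniform bounds (\ref{4.2})--(\ref{4.6}) and the improved integrability of the pressure from Lemma \ref{lem4.1}. First I would dispose of the continuity, concentration and $Q$-tensor equations. The continuity equation is linear in $\tilde{\rho}_\epsilon$ and $\tilde{\rho}_\epsilon\tilde{u}_\epsilon$, both of which converge strongly, and the limit equation follows immediately. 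The concentration equation is handled through the convergence $\tilde{u}_\epsilon\cdot\nabla\tilde{c}_\epsilon\to\tilde{u}\cdot\nabla\tilde{c}$ as in (\ref{3.36*}). The $Q$-tensor equation is treated by repeating the decomposition of Lemma \ref{lem3.6} with $\epsilon$ in place of $n$, invoking the strong $L^2(0,T;H^1(\mathcal{D}))$ convergence of $\widetilde{Q}_\epsilon$ and the uniform $L^p(\widetilde{\Omega};L^2(0,T;H^2(\mathcal{D})))$ bound.

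For the momentum equation I would treat the four families of terms separately. The convective term $\tilde{\rho}_\epsilon\tilde{u}_\epsilon\otimes\tilde{u}_\epsilon\to\tilde{\rho}\tilde{u}\otimes\tilde{u}$ in the sense of distributions by the same reasoning as (\ref{3.35})--(\ref{3.36}), combining strong compactness of $\tilde{\rho}_\epsilon\tilde{u}_\epsilon$ in $L^q([0,T]\times\mathcal{D})$ for some $q\in[1,2\beta/(\beta+1))$ with the weak convergence of $\tilde{u}_\epsilon$ in $L^2(0,T;H^1(\mathcal{D}))$. The coupling terms with $c$ and $Q$, namely $\mathrm{F}(Q)\mathrm{I}_3-\nabla Q\odot\nabla Q$, $Q\triangle Q-\triangle Q\,Q$ and $c^2Q$, are identified by transposing Lemma \ref{lem3.5} to the $\epsilon$-sequence. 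The artificial viscosity contribution vanishes: since $\sqrt{\epsilon}\,\nabla\tilde{\rho}_\epsilon$ and $\nabla\tilde{u}_\epsilon$ are both uniformly bounded in $L^p(\widetilde{\Omega};L^2(0,T;L^2(\mathcal{D})))$, Cauchy--Schwarz gives
\begin{align*}
\|\epsilon\,\nabla\tilde{\rho}_\epsilon\cdot\nabla\tilde{u}_\epsilon\|_{L^1(\widetilde{\Omega}\times(0,T)\times\mathcal{D})}\leq C\sqrt{\epsilon}\to 0.
\end{align*}
The pressure $\tilde{\rho}_\epsilon^\gamma+\delta\tilde{\rho}_\epsilon^\beta$ is uniformly bounded in $L^{(\beta+1)/\beta}(\widetilde{\Omega}\times(0,T)\times\mathcal{D})$ by Lemma \ref{lem4.1} and by (\ref{4.12}) converges weakly to $\overline{\tilde{\rho}^\gamma}+\delta\overline{\tilde{\rho}^\beta}$, which is precisely the object entering the limit equation. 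No strong identification $\overline{\tilde{\rho}^\gamma}=\tilde{\rho}^\gamma$ is required at this stage; that is postponed to the Minty/effective-viscous-flow analysis in Section 5 announced in the introduction.

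The principal obstacle is the stochastic integral, because $\tilde{\rho}_\epsilon$ converges only in a negative Sobolev topology so that $\int_0^t\tilde{\rho}_\epsilon f(\tilde{\rho}_\epsilon,\tilde{\rho}_\epsilon\tilde{u}_\epsilon,\tilde{c}_\epsilon,\widetilde{Q}_\epsilon)\,d\widetilde{\mathcal{W}}_\epsilon$ cannot be passed to the limit by direct $L^2$-convergence of the integrand. My strategy is to imitate the martingale identification scheme used at the end of Section 3: define the functional $\mathcal{N}(\tilde{c},\tilde{\rho},\tilde{u},\widetilde{Q})_t$ as the difference between the momentum moments $\langle\tilde{\rho}\tilde{u}(t),\phi\rangle-\langle\tilde{m}_0,\phi\rangle$ and all deterministic contributions appearing in the limit equation, verify through Proposition \ref{pro4.1}, the a.s.\ convergences and the Vitali theorem that $\mathcal{N}_\epsilon\to\mathcal{N}$ in $L^2(\widetilde{\Omega})$, and then show that $\mathcal{N}(\tilde{c},\tilde{\rho},\tilde{u},\widetilde{Q})_t$ is a square-integrable $\widetilde{\mathcal{F}}_t$-martingale with quadratic variation $\sum_{k\geq 1}\int_0^t\langle\tilde{\rho} f(\tilde{\rho},\tilde{\rho}\tilde{u},\tilde{c},\widetilde{Q})e_k,\phi\rangle^2\,ds$ and cross variation with $\widetilde{\beta}_k$ equal to $\int_0^t\langle\tilde{\rho} f(\tilde{\rho},\tilde{\rho}\tilde{u},\tilde{c},\widetilde{Q})e_k,\phi\rangle\,ds$. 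The key ingredients are the Lipschitz-type condition (\ref{2.2*}) together with the strong convergences of $\tilde{\rho}_\epsilon,\tilde{\rho}_\epsilon\tilde{u}_\epsilon,\tilde{c}_\epsilon,\widetilde{Q}_\epsilon$ in $L^{2\gamma/(\gamma+1)}$, which control the limit of the Nemytskii operator, and the growth bound (\ref{2.1*}) together with (\ref{4.2})--(\ref{4.6}) which secures uniform integrability for the Vitali step. The resulting martingale is the $L_2(\mathcal{H};H^{-l})$-valued process $\widetilde{W}$ appearing in the statement.
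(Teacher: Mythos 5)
Your treatment of the deterministic terms is sound and matches the paper's references to subsection 3.3 and Lemma \ref{lem4.1}. You also correctly postpone the identification of $\overline{\tilde{\rho}^\gamma}$ to the effective viscous flux analysis. However, there is a genuine gap in your handling of the stochastic integral, and the gap is exactly the parallel issue you noticed for the pressure but did not apply to the noise.

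You propose to show that $\mathcal{N}(\tilde{c},\tilde{\rho},\tilde{u},\widetilde{Q})_t$ has quadratic variation $\sum_{k\geq 1}\int_0^t\langle\tilde{\rho} f(\tilde{\rho},\tilde{\rho}\tilde{u},\tilde{c},\widetilde{Q})e_k,\phi\rangle^2 ds$ and cross-variation $\int_0^t\langle\tilde{\rho} f(\tilde{\rho},\tilde{\rho}\tilde{u},\tilde{c},\widetilde{Q})e_k,\phi\rangle ds$, invoking condition (\ref{2.2*}) and ``strong convergences of $\tilde{\rho}_\epsilon$ in $L^{2\gamma/(\gamma+1)}$.'' But in the $\epsilon\to 0$ layer such strong convergence of $\tilde{\rho}_\epsilon$ is not available. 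Proposition \ref{pro4.1} only provides $\tilde{\rho}_\epsilon\to\tilde{\rho}$ in $L^\infty(0,T;H^{-1/2}(\mathcal{D}))$ together with weak convergence in $L^{\beta+1}$; the crucial $L^2(0,T;L^2(\mathcal{D}))$ component that was present in $\mathcal{X}_\rho$ in Section 3 (and which gave strong $L^4(0,T;L^4)$ convergence of $\tilde{\rho}_n$ by Vitali) has been deliberately removed, since the bound $\sqrt{\epsilon}\rho_{\epsilon}\in L^p(\Omega;L^2(0,T;H^1(\mathcal{D})))$ degenerates as $\epsilon\to 0$. This is stated explicitly at the start of Section 4: ``Therefore, we are not able to identify the pressure and stochastic term.'' Consequently the Lipschitz bound (\ref{2.2*}) cannot be used, because $\|\tilde{\rho}_\epsilon-\tilde{\rho},\,\tilde{\rho}_\epsilon\tilde{u}_\epsilon-\tilde{\rho}\tilde{u},\,\ldots\|_{L^{2\gamma/(\gamma+1)}}$ does not go to zero. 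Your proposed identification of $\widetilde{W}$ as a stochastic integral with coefficient $\tilde{\rho}f(\tilde{\rho},\tilde{\rho}\tilde{u},\tilde{c},\widetilde{Q})$ therefore cannot be carried out at this stage.

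Note also that the Proposition as stated does not claim this: it asserts only the existence of an $L_2(\mathcal{H};H^{-l})$-valued \emph{martingale} $\widetilde{W}$. That is the level of information recoverable from the convergences at hand, and it is the route the paper takes: pass to the limit in the drift, verify that the residual process $\mathcal{N}(\tilde{c},\tilde{\rho},\tilde{u},\widetilde{Q})_t$ is a square-integrable $\widetilde{\mathcal{F}}_t$-martingale (via $\mathcal{N}_\epsilon\to\mathcal{N}$ in $L^2(\widetilde{\Omega})$ and the martingale property of the $\mathcal{N}_\epsilon$), but \emph{not} identify its quadratic or cross variations with those of $\int_0^t\tilde{\rho}f(\cdot)\,d\widetilde{\mathcal{W}}$. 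Only after Steps 1--2 of subsection 4.2 establish strong convergence of the density (through the effective viscous flux and the Minty argument) can the noise coefficient be identified in the manner of (\ref{3.39}). You should therefore drop the explicit variation formulas in terms of $\tilde{\rho}f(\tilde{\rho},\ldots)$ and stop at the assertion that the limit residual is a martingale, deferring its identification to the point where strong convergence of $\tilde{\rho}_\epsilon$ becomes available.
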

\begin{proof} The argument is similar to that one in subsection 3.3. Note that due to the lack of strong convergence of density, we can not identify the specific form of stochastic term, but we could verify that $\widetilde{W}$ is still a martingale process, see \cite{Hofmanova}.
\end{proof}
In order to identify the nonlinear term of $\rho$ (the pressure term and the stochastic term), the strong convergence of density is necessary, which can be acquired by two steps following the idea of \cite{Feireisl,Lions}.

\noindent{\bf Step 1}. Weak convergence of the effective viscous flux.

The quantity $\rho^{\gamma}+\delta\rho^{\beta}-(\mu_2+2\mu_1){\rm div}u$ usually called the effective viscous flux which enjoys many remarkable properties, see \cite{hoff,Lions}. Introduce the operator $\mathcal{A}$
\begin{eqnarray*}
\mathcal{A}[f]=\nabla \triangle^{-1}f, ~~\mathcal{A}_j[f]=\partial_j \triangle^{-1}f,
\end{eqnarray*}
with the following properties:
\begin{eqnarray}
&&{\rm div} \mathcal{A}[f]=f, ~~\triangle \mathcal{A}[f]=\nabla f,\label{4.24}\\
&&\|\mathcal{A}[f]\|_{H^{1,p}(\mathcal{D})}\leq C\|f\|_{L^p(\mathcal{D})},~ {\rm for ~all }~p\geq 1,\label{4.25}\\
&&\|\mathcal{A}[f]\|_{L^\infty(\mathcal{D})}\leq C\|f\|_{L^p(\mathcal{D})},~ {\rm for ~all }~p>3.\label{4.26}
\end{eqnarray}

 {Note that} $\tilde{\rho}_\epsilon, \tilde{u}_\epsilon$ could be extended to zero outside $\mathcal{D}$ satisfying
$$\partial_t\tilde{\rho}_\epsilon+{\rm div}(\tilde{\rho}_\epsilon \tilde{u}_\epsilon)=\epsilon{\rm div}(1_\mathcal{D}\nabla\tilde{\rho}_\epsilon),$$
in the weak sense, where $1_{\mathcal{D}}$ stands for the indicator function. We could also do the zero extension to limit function $\tilde{\rho}, \tilde{u}$ to $\mathbb{R}^3$ which satisfies the equation $\eqref{Equ4.1}_2$ in the weak sense, for further detail see \cite{Feireisl,11,DWang}.

Using the It\^{o} formula to functions $f_1(\tilde{\rho}_{\epsilon},\tilde{\rho}_{\epsilon} \tilde{u}_{\epsilon})=\int_{\mathcal{D}}\tilde{\rho}_{\epsilon} \tilde{u}_{\epsilon}\cdot \tilde{\psi}\tilde{\phi}\mathcal{A}[\tilde{\rho}_{\epsilon} ]dx$ and $f_2(\tilde{\rho},\tilde{\rho} \tilde{u})=\int_{\mathcal{D}}\tilde{\rho} \tilde{u}\cdot \tilde{\psi}\tilde{\phi}\mathcal{A}[\tilde{\rho}]dx$ where the functions $\tilde{\psi}\in C_c^\infty(0, T), \tilde{\phi}\in C_c^\infty(\mathcal{D})$, taking expectation, to obtain
\begin{eqnarray}\label{4.27}
&& {\widetilde{\mathbb{E}}\int_{0}^{t}\tilde{\psi}(s)\int_{\mathcal{D}}\tilde{\phi}(\tilde{\rho}_{\epsilon}^{\gamma+1}
+\delta \tilde{\rho}_{\epsilon}^{\beta+1}-(\mu_2+2\mu_1){\rm div}\tilde{u}_{\epsilon})\cdot \tilde{\rho}_{\epsilon}dxds}\nonumber\\
&&=-\widetilde{\mathbb{E}}\int_{0}^{t}\tilde{\psi}(s)\int_{\mathcal{D}}\tilde{\phi}(\tilde{\rho}_{\epsilon}^{\gamma}
+\delta \tilde{\rho}_{\epsilon}^{\beta})\partial_i\tilde{\phi}\mathcal{A}_i [\tilde{\rho}_{\epsilon}]dxds\nonumber\\
&&\quad+(\mu_1+\mu_2)\widetilde{\mathbb{E}}\int_{0}^{t}\tilde{\psi}(s)\int_{\mathcal{D}}{\rm div}\tilde{u}_{\epsilon}\partial_i\tilde{\phi}\mathcal{A}_i [\tilde{\rho}_{\epsilon}]dxds+\mu_1\widetilde{\mathbb{E}}\int_{0}^{t}\tilde{\psi}(s)\int_{\mathcal{D}}\partial_j\tilde{u}^i_\epsilon\partial_j\tilde{\phi}\mathcal{A}_i [\tilde{\rho}_{\epsilon}]dxds\nonumber\\
&&\quad-\mu_1\widetilde{\mathbb{E}}\int_{0}^{t}\tilde{\psi}(s)\int_{\mathcal{D}}\tilde{u}^i_\epsilon\partial_j\tilde{\phi}\partial_j\mathcal{A}_i [\tilde{\rho}_{\epsilon}]dxds+
\mu_1\widetilde{\mathbb{E}}\int_{0}^{t}\tilde{\psi}(s)\int_{\mathcal{D}}\tilde{u}^i_\epsilon\partial_i\tilde{\phi}\tilde{\rho}_{\epsilon}dxds\nonumber\\
&&\quad-\epsilon\widetilde{\mathbb{E}}\int_{0}^{t}\tilde{\psi}(s)\int_{\mathcal{D}}\tilde{\phi}\tilde{\rho}_\epsilon \tilde{u}_\epsilon^i \mathcal{A}_i [{\rm div}(1_\mathcal{D}\nabla\tilde{\rho}_{\epsilon})]dxds\nonumber\\
&&\quad+\widetilde{\mathbb{E}}\int_{0}^{t}\tilde{\psi}(s)\int_{\mathcal{D}}
\tilde{\phi}\tilde{u}^{i}_{\epsilon}(\tilde{\rho}_{\epsilon}\partial_i\mathcal{A}_j[\tilde{\rho}_{\epsilon}\tilde{u}_{\epsilon}^{j}]
-\tilde{\rho}_{\epsilon}\tilde{u}^{i}_{\epsilon}\partial_i\mathcal{A}_j[\tilde{\rho}_{\epsilon}])dxds\nonumber\\
&&\quad-\widetilde{\mathbb{E}}\int_{0}^{t}\tilde{\psi}_s(s)\int_{\mathcal{D}}\tilde{\phi}\tilde{\rho}_{\epsilon}\tilde{u}_{\epsilon}^{i}\mathcal{A}_i [\tilde{\rho}_{\epsilon}]dxds
-\widetilde{\mathbb{E}}\int_{0}^{t}\tilde{\psi}(s)\int_{\mathcal{D}}\tilde{\rho}_{\epsilon}\tilde{u}_{\epsilon}^{i}\tilde{u}_{\epsilon}^{j}\partial_j\tilde{\phi}\mathcal{A}_i [\tilde{\rho}_{\epsilon}]dxds\nonumber\\
&&\quad+\epsilon\widetilde{\mathbb{E}}\int_{0}^{t}\tilde{\psi}(s)\int_{\mathcal{D}}\tilde{\phi} \partial_j\tilde{u}_{\epsilon}^{i}\partial_j\tilde{\rho}_{\epsilon}\mathcal{A}_i [\tilde{\rho}_{\epsilon}]dxds\nonumber\\
&&\quad+\widetilde{\mathbb{E}}\int_0^t\tilde{\psi}(s)\int_{\mathcal{D}}
\sigma^*(\tilde{c}_\epsilon^2\widetilde{Q}_\epsilon):(\tilde{\phi}\nabla\mathcal{A}[\tilde{\rho}_{\epsilon}]+\nabla \tilde{\phi}\otimes\mathcal{A}[\tilde{\rho}_{\epsilon}]) dxds\nonumber\\
&&\quad+\widetilde{\mathbb{E}}\int_0^t\tilde{\psi}(s)\int_{\mathcal{D}}({\rm F}(\widetilde{Q}_\epsilon){\rm I}_3-\nabla \widetilde{Q}_\epsilon\odot \nabla \widetilde{Q}_\epsilon):(\tilde{\phi}\nabla\mathcal{A}[\tilde{\rho}_{\epsilon}]+\nabla \tilde{\phi}\otimes\mathcal{A}[\tilde{\rho}_{\epsilon}]) dxds\nonumber\\
&&\quad+\widetilde{\mathbb{E}}\int_0^t\tilde{\psi}(s)\int_{\mathcal{D}}(\widetilde{Q}_\epsilon\triangle \widetilde{Q}_\epsilon-\triangle \widetilde{Q}_\epsilon \widetilde{Q}_\epsilon):(\tilde{\phi}\nabla\mathcal{A}[\tilde{\rho}_{\epsilon}]+\nabla \tilde{\phi}\otimes\mathcal{A}[\tilde{\rho}_{\epsilon}]) dxds\nonumber\\
&&=:J_1^\epsilon+\cdots+J_{13}^\epsilon,
\end{eqnarray}
and
\begin{eqnarray}\label{4.28}
&&\widetilde{\mathbb{E}}\int_{0}^{t}\tilde{\psi}(s)\int_{\mathcal{D}}\tilde{\phi}(\overline{\tilde{\rho}^\gamma}
+\delta\overline{\tilde{\rho}^{\beta}})\tilde{\rho}-\tilde{\phi}(\mu_2+2\mu_1){\rm div}\tilde{u}\cdot \tilde{\rho}dxds\nonumber\\
&&=-\widetilde{\mathbb{E}}\int_{0}^{t}\tilde{\psi}(s)\int_{\mathcal{D}}\tilde{\phi}(\overline{\tilde{\rho}^\gamma}
+\delta\overline{\tilde{\rho}^{\beta}})\partial_i\tilde{\phi}\mathcal{A}_i [\tilde{\rho}]dxds\nonumber\\
&&\quad+(\mu_1+\mu_2)\widetilde{\mathbb{E}}\int_{0}^{t}\tilde{\psi}(s)\int_{\mathcal{D}}{\rm div}\tilde{u}\partial_i\tilde{\phi}\mathcal{A}_i [\tilde{\rho}]dxds+\mu_1\widetilde{\mathbb{E}}\int_{0}^{t}\tilde{\psi}(s)\int_{\mathcal{D}}\partial_j\tilde{u}^i\partial_j\tilde{\phi}\mathcal{A}_i [\tilde{\rho}]dxds\nonumber\\
&&\quad-\mu_1\widetilde{\mathbb{E}}\int_{0}^{t}\tilde{\psi}(s)\int_{\mathcal{D}}\tilde{u}^i\partial_j\tilde{\phi}\partial_j\mathcal{A}_i [\tilde{\rho}]dxds+
\mu_1\widetilde{\mathbb{E}}\int_{0}^{t}\tilde{\psi}(s)\int_{\mathcal{D}}\tilde{u}^i\partial_i\tilde{\phi}\tilde{\rho}dxds\nonumber\\
&&\quad+\widetilde{\mathbb{E}}\int_{0}^{t}\tilde{\psi}(s)\int_{\mathcal{D}}
\tilde{\phi}\tilde{u}^{i}(\tilde{\rho}\partial_i\mathcal{A}_j[\tilde{\rho}\tilde{u}^{j}]
-\tilde{\rho}_{\epsilon}\tilde{u}^{i}\partial_i\mathcal{A}_j[\tilde{\rho}])dxds\nonumber\\
&&\quad-\widetilde{\mathbb{E}}\int_{0}^{t}\tilde{\psi}_s(s)\int_{\mathcal{D}}\tilde{\phi}\tilde{\rho}\tilde{u}^{i}\mathcal{A}_i [\tilde{\rho}]dxds
-\widetilde{\mathbb{E}}\int_{0}^{t}\tilde{\psi}(s)\int_{\mathcal{D}}\tilde{\rho}\tilde{u}^{i}\tilde{u}^{j}\partial_j\tilde{\phi}\mathcal{A}_i [\tilde{\rho}]dxds\nonumber\\
&&\quad+\widetilde{\mathbb{E}}\int_0^t\tilde{\psi}(s)\int_{\mathcal{D}}
\sigma^*(\tilde{c}^2\widetilde{Q}):(\tilde{\phi}\nabla\mathcal{A}[\tilde{\rho}]+\nabla \tilde{\phi}\otimes\mathcal{A}[\tilde{\rho}]) dxds\nonumber\\
&&\quad+\widetilde{\mathbb{E}}\int_0^t\tilde{\psi}(s)\int_{\mathcal{D}}({\rm F}(\widetilde{Q}){\rm I}_3-\nabla \widetilde{Q}\odot \nabla \widetilde{Q}):(\tilde{\phi}\nabla\mathcal{A}[\tilde{\rho}]+\nabla \tilde{\phi}\otimes\mathcal{A}[\tilde{\rho}]) dxds\nonumber\\
&&\quad+\widetilde{\mathbb{E}}\int_0^t\tilde{\psi}(s)\int_{\mathcal{D}}(\widetilde{Q}\triangle \widetilde{Q}-\triangle \widetilde{Q} \widetilde{Q}):(\tilde{\phi}\nabla\mathcal{A}[\tilde{\rho}]+\nabla \tilde{\phi}\otimes\mathcal{A}[\tilde{\rho}]) dxds\nonumber\\
&&=:J_1+\cdots+J_{11},
\end{eqnarray}
 here the stochastic integral is also cancelled resulting from the property of martingale.

 Our main goal is to get for all $t\in [0,T]$
\begin{eqnarray}\label{4.29}
&&\lim_{\epsilon\rightarrow 0}\widetilde{\mathbb{E}}\int_{0}^{t}\tilde{\psi}(s)\int_{\mathcal{D}}\tilde{\phi}(\tilde{\rho}_{\epsilon}^{\gamma+1}
+\delta \tilde{\rho}_{\epsilon}^{\beta+1}-(\mu_2+2\mu_1){\rm div}\tilde{u}_{\epsilon}\cdot \tilde{\rho}_{\epsilon})dxds\nonumber\\
&&=\widetilde{\mathbb{E}}\int_{0}^{t}\tilde{\psi}(s)\int_{\mathcal{D}}\tilde{\phi}(\overline{\tilde{\rho}^\gamma}
+\delta\overline{\tilde{\rho}^{\beta}})\tilde{\rho}-\tilde{\phi}(\mu_2+2\mu_1){\rm div}\tilde{u}\cdot \tilde{\rho}dxds,
\end{eqnarray}
it suffices to show that all right hand side terms of (\ref{4.27}) converges to the right hand side terms of (\ref{4.28}). Denote $J^\epsilon_{11}=J^\epsilon_{11,a}+J^\epsilon_{11,b}$, $J_{9}=J_{9,a}+J_{9,b}$, decompose
\begin{eqnarray*}
&&J_{11,a}^\epsilon-J_{9,a}=\widetilde{\mathbb{E}}\int_0^t\tilde{\psi}(s)\int_{\mathcal{D}}
\sigma^*(\tilde{c}_\epsilon^2\widetilde{Q}_\epsilon-\tilde{c}^2\widetilde{Q}):\tilde{\phi}\nabla\mathcal{A}[\tilde{\rho}_{\epsilon}]dxds\nonumber\\
&&\qquad\quad\quad\quad\quad+\widetilde{\mathbb{E}}\int_0^t\tilde{\psi}(s)\int_{\mathcal{D}}
\sigma^*(\tilde{c}^2\widetilde{Q}):\tilde{\phi}\nabla\mathcal{A}[\tilde{\rho}_{\epsilon}-\tilde{\rho}]dxds=\mathcal{J}_{1,a}+\mathcal{J}_{2,a},\nonumber\\
&&J_{11,b}^\epsilon-J_{9,b}=\widetilde{\mathbb{E}}\int_0^t\tilde{\psi}(s)\int_{\mathcal{D}}
\sigma^*(\tilde{c}_\epsilon^2\widetilde{Q}_\epsilon-\tilde{c}^2\widetilde{Q}):\nabla\tilde{\phi}\otimes\mathcal{A}[\tilde{\rho}_{\epsilon}]dxds\nonumber\\
&&\qquad\quad\quad\quad\quad+\widetilde{\mathbb{E}}\int_0^t\tilde{\psi}(s)\int_{\mathcal{D}}
\sigma^*(\tilde{c}^2\widetilde{Q}):\nabla\tilde{\phi}\otimes\mathcal{A}[\tilde{\rho}_{\epsilon}-\tilde{\rho}]dxds=\mathcal{J}_{1,b}+\mathcal{J}_{2,b}.
\end{eqnarray*}
By Proposition \ref{pro4.1}(\ref{4.12}) and the bounds (\ref{4.5}), (\ref{4.6}), we have
\begin{eqnarray}\label{4.30}
\mathcal{J}_{2,a}\rightarrow 0, ~{\rm as}~ \epsilon\rightarrow 0.
\end{eqnarray}
$\mathcal{J}_{1,a}$ can be handled as follows, by Proposition \ref{pro4.1}(\ref{4.11}), the bounds (\ref{4.3}), (\ref{4.5}), (\ref{4.6}) and the assumption $\beta>6$
\begin{align}\label{4.31}
&|\mathcal{J}_{1,a}|\leq \left|\widetilde{\mathbb{E}}\int_0^t\tilde{\psi}(s)\int_{\mathcal{D}}
\sigma^*(\tilde{c}_\epsilon^2-\tilde{c}^2)\widetilde{Q}_\epsilon:\tilde{\phi}\nabla\mathcal{A}[\tilde{\rho}_{\epsilon}]dxds\right|\nonumber\\
&\qquad\quad+\left|\widetilde{\mathbb{E}}\int_0^t\tilde{\psi}(s)\int_{\mathcal{D}}
\sigma^*(\tilde{c}^2(\widetilde{Q}_\epsilon-\widetilde{Q}):\tilde{\phi}\nabla\mathcal{A}[\tilde{\rho}_{\epsilon}]dxds\right|\nonumber\\
&~\qquad\leq \sigma^*C
\widetilde{\mathbb{E}}\int_0^t
\|\tilde{c}_\epsilon-\tilde{c}\|_{L^2}\|\tilde{c}_\epsilon,\tilde{c}\|_{L^6}\|\widetilde{Q}_\epsilon\|_{L^6}\|\tilde{\rho}_{\epsilon}\|_{L^\beta}ds\nonumber\\
&~\qquad\quad+\sigma^*C
\widetilde{\mathbb{E}}\int_0^t\|\tilde{c}\|_{L^2}\|\tilde{c}\|_{L^6}\|\widetilde{Q}_\epsilon-\widetilde{Q}\|_{L^6}\|\tilde{\rho}_{\epsilon}\|_{L^\beta}ds\nonumber\\
&~\qquad\leq \sigma^*C\widetilde{\mathbb{E}}\int_0^t\|\tilde{c}_\epsilon-\tilde{c}\|_{L^2}^2+\|\widetilde{Q}_\epsilon-\widetilde{Q}\|_{H^1}^2ds
\widetilde{\mathbb{E}}\left[\sup_{t\in[0,T]}(\|\tilde{\rho}_{\epsilon}\|_{L^\beta}^2+\|\tilde{c}, \widetilde{Q}_\epsilon, \nabla \widetilde{Q}_\epsilon\|^2_{L^2})\right]\nonumber\\
&~\qquad\quad\times\widetilde{\mathbb{E}}\int_0^t\|\tilde{c}_\epsilon, \tilde{c}\|_{L^6}^2ds \rightarrow 0, ~~{\rm as}~\epsilon\rightarrow 0.
\end{align}
In addition, we could get $\lim_{\epsilon\rightarrow 0}J_{11,b}^\epsilon=J_{9,b}$, then it follows $\lim_{\epsilon\rightarrow 0}J_{11}^\epsilon=J_9$ using (\ref{4.30}) and (\ref{4.31}). Similarly, decompose
\begin{align*}
&J_{12,a}^\epsilon-J_{10,a}\nonumber\\&=\widetilde{\mathbb{E}}\int_0^t\tilde{\psi}(s)\int_{\mathcal{D}}({\rm F}(\widetilde{Q}_\epsilon){\rm I}_3-{\rm F}(\widetilde{Q}){\rm I}_3-(\nabla \widetilde{Q}_\epsilon\odot \nabla \widetilde{Q}_\epsilon-\nabla \widetilde{Q}\odot \nabla \widetilde{Q})):\tilde{\phi}\nabla\mathcal{A}[\tilde{\rho}_{\epsilon}]dxds\nonumber\\
&\quad+\widetilde{\mathbb{E}}\int_0^t\tilde{\psi}(s)\int_{\mathcal{D}}({\rm F}(\widetilde{Q}){\rm I}_3-\nabla \widetilde{Q}\odot \nabla \widetilde{Q}):\tilde{\phi}\nabla\mathcal{A}[\tilde{\rho}_\epsilon-\tilde{\rho}]dxds=\mathcal{I}_{1,a}+\mathcal{I}_{2,a}.
\end{align*}
Due to the same arguments as (\ref{4.30}) and (\ref{4.31}), as $\epsilon\rightarrow 0$
\begin{align}
\mathcal{I}_{2,a}&\rightarrow 0, \label{4.32}\\
|\mathcal{I}_{1,a}|&\leq \left|\widetilde{\mathbb{E}}\int_0^t\tilde{\psi}(s)\int_{\mathcal{D}}(\nabla (\widetilde{Q}_\epsilon-\widetilde{Q})\odot \nabla \widetilde{Q}_\epsilon-\nabla \widetilde{Q}\odot \nabla (\widetilde{Q}_\epsilon-\widetilde{Q})):\tilde{\phi}\nabla\mathcal{A}[\tilde{\rho}_{\epsilon}]dxds\right|\nonumber\\ & \leq C \widetilde{\mathbb{E}}\int_0^t\|\nabla\widetilde{Q}_\epsilon, \nabla\widetilde{Q}\|_{L^6}\|\widetilde{Q}_\epsilon-\widetilde{Q}\|_{L^2}\|\nabla\mathcal{A}[\tilde{\rho}_{\epsilon}]\|_{L^\beta}ds\nonumber\\
&\leq C\widetilde{\mathbb{E}}\left[\sup_{t\in [0,T]}\|\tilde{\rho}_{\epsilon}\|_{L^\beta}\right]\widetilde{\mathbb{E}}\int_0^t\|\widetilde{Q}_\epsilon-\widetilde{Q}\|_{L^2}^2ds
\widetilde{\mathbb{E}}\int_0^t\|\nabla\widetilde{Q}_\epsilon, \nabla\widetilde{Q}\|_{L^6}^2ds\rightarrow 0, \label{4.33}\\
&\widetilde{\mathbb{E}}\int_0^t\tilde{\psi}(s)\int_{\mathcal{D}}({\rm F}(\widetilde{Q}_\epsilon)-{\rm F}(\widetilde{Q})){\rm I}_3:\tilde{\phi}\nabla\mathcal{A}[\tilde{\rho}_{\epsilon}]dxds\rightarrow 0.\label{4.34}
\end{align}
Combining the convergence (\ref{4.32})-(\ref{4.34}), we get $\lim_{\epsilon\rightarrow 0}J_{12,a}^\epsilon= J_{10,a}$. Using similar estimate, we could get  $\lim_{\epsilon\rightarrow 0}J_{12,b}^\epsilon= J_{10,b}$, it follows $\lim_{\epsilon\rightarrow 0}J_{12}^\epsilon= J_{10}$.

Denote $J^\epsilon_{13}=J^\epsilon_{13,a}+J^\epsilon_{13,b}$, $J_{11}=J_{11,a}+J_{11,b}$, decompose
\begin{align}\label{4.27*}
J^\epsilon_{13,b}-J_{11,b}&=\widetilde{\mathbb{E}}\int_0^t\tilde{\psi}(s)\int_{\mathcal{D}}(\widetilde{Q}_\epsilon\triangle \widetilde{Q}_\epsilon-\triangle \widetilde{Q}_\epsilon \widetilde{Q}_\epsilon):\nabla \tilde{\phi}\otimes(\mathcal{A}[\tilde{\rho}_{\epsilon}]-\mathcal{A}[\tilde{\rho}]) dxds\nonumber\\
&\quad+\widetilde{\mathbb{E}}\int_0^t\tilde{\psi}(s)\int_{\mathcal{D}}((\widetilde{Q}_\epsilon-\widetilde{Q})\triangle \widetilde{Q}_\epsilon-\triangle \widetilde{Q}_\epsilon (\widetilde{Q}_\epsilon-\widetilde{Q})):\nabla \tilde{\phi}\otimes\mathcal{A}[\tilde{\rho}]dxds\nonumber\\
&\quad+\widetilde{\mathbb{E}}\int_0^t\tilde{\psi}(s)\int_{\mathcal{D}}(\widetilde{Q}_\epsilon\triangle (\widetilde{Q}_\epsilon-\widetilde{Q})-\triangle (\widetilde{Q}_\epsilon -\widetilde{Q})\widetilde{Q}_\epsilon):\nabla \tilde{\phi}\otimes\mathcal{A}[\tilde{\rho}]dxds.
\end{align}
Using Proposition \ref{pro4.1}(\ref{4.11}), the bound (\ref{4.6}), the estimate \eqref{4.26} and the H\"{o}lder inequality, we deduce that the right hand terms of \eqref{4.27*} go to $0$, (similar to \eqref{4.33}).

 Since the matrices $\widetilde{Q}_\epsilon$, $\widetilde{Q}$ are symmetric, hence $\widetilde{Q}_\epsilon\triangle \widetilde{Q}_\epsilon-\triangle \widetilde{Q}_\epsilon \widetilde{Q}_\epsilon$, $\widetilde{Q}\triangle \widetilde{Q}-\triangle \widetilde{Q} \widetilde{Q}$ are skew-symmetric, and note that $\nabla\mathcal{A}[\tilde{\rho}_{\epsilon}], \nabla\mathcal{A}[\tilde{\rho}]$ are symmetric, to conclude that $J_{13,a}^\epsilon, J_{11,a}=0$. (The special structure of $Q$-tensor makes the weak convergence possible, otherwise we are not able to handle the high-order nonlinear term).

We also have $J_{10}^\epsilon\rightarrow 0$ as $\epsilon\rightarrow 0$. For $J_{6}^\epsilon$, as $\epsilon\rightarrow 0$
\begin{eqnarray*}
&&~|J^\epsilon_3|\leq \sqrt{\epsilon}C\widetilde{\mathbb{E}}\int_0^t \|\sqrt{\epsilon} \nabla \tilde{\rho}_\epsilon\|_{L^2}\|\tilde{u}_\epsilon\|_{L^6}\|\tilde{\rho}_\epsilon\|_{L^\beta}ds\\
&&\qquad\leq \sqrt{\epsilon}C\widetilde{\mathbb{E}}\left[\sup_{t\in[0,T]}\|\tilde{\rho}_\epsilon\|_{L^\beta}\right] \widetilde{\mathbb{E}}\int_0^t\|\sqrt{\epsilon} \nabla \tilde{\rho}_\epsilon\|_{L^2}^2ds \widetilde{\mathbb{E}}\int_0^t\|\nabla \tilde{u}_\epsilon\|_{L^2}^2ds\\
&&\qquad\leq C\sqrt{\epsilon}\rightarrow 0.
\end{eqnarray*}
The proofs of convergence of rest terms are standard, we refer the readers to \cite{Hofmanova, DWang,16}. Finally, we obtain the convergence result (\ref{4.29}).

\noindent{\bf Step 2}. Strong convergence of density.

In this step, we could show the strong convergence of density using the re-normalized mass equation and the Minty idea, for further details see \cite{Feireisl}.

Now we can pass the limit to identify the stochastic term and nonlinear pressure term using the same argument as ({\ref{3.39}}), obtaining the following result,

\begin{proposition} For $\beta>{\rm max}\{6,\gamma\}$, $\delta>0$ and  if conditions (\ref{2.1}), (\ref{2.2}) hold. There exists a global weak martingale solution to the modified system (\ref{Equ4.1}).
\end{proposition}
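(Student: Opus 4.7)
The plan is to combine the weak continuity of the effective viscous flux established in Step 1 with a renormalized mass equation argument to upgrade the weak convergence $\tilde{\rho}_\epsilon \rightharpoonup \tilde{\rho}$ to strong convergence, and then pass to the limit in the momentum equation (in particular identifying $\overline{\tilde{\rho}^\gamma}+\delta\overline{\tilde{\rho}^\beta}$ with $\tilde{\rho}^\gamma + \delta \tilde{\rho}^\beta$ and identifying the stochastic integral). All other ingredients (compactness, tightness, passing to the limit in the linear terms and the nonlinear $Q$-tensor terms) are already in hand from Proposition~\ref{pro4.1} and the convergence results of Section~3.

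The main step is to extract, from the effective viscous flux identity~(\ref{4.29}), the strong convergence of $\tilde{\rho}_\epsilon$. I would follow Feireisl--Lions: since $\tilde{\rho}_\epsilon$ solves a parabolic regularization of the continuity equation, it is a renormalized solution, and by stability in the DiPerna--Lions framework one can pass to the limit to obtain a renormalized form for $\tilde{\rho}$. Taking the test function $\ln$ (or rather $b(\rho)=\rho\ln\rho$) one derives an evolution inequality for $\overline{\tilde{\rho}\ln\tilde{\rho}}-\tilde{\rho}\ln\tilde{\rho}$, whose right-hand side involves $\overline{\tilde{\rho}^{\gamma+1}}+\delta\overline{\tilde{\rho}^{\beta+1}} - (\overline{\tilde{\rho}^\gamma}+\delta\overline{\tilde{\rho}^\beta})\tilde{\rho}$ and $\overline{\tilde{\rho}\,\mathrm{div}\tilde{u}}-\tilde{\rho}\,\mathrm{div}\tilde{u}$. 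By (\ref{4.29}) this right-hand side is non-positive (monotonicity of $z\mapsto z^\gamma+\delta z^\beta$), while the left-hand side is non-negative by convexity of $z\mapsto z\ln z$; taking expectations and applying Gronwall yields $\overline{\tilde{\rho}\ln\tilde{\rho}}=\tilde{\rho}\ln\tilde{\rho}$ a.s., which by strict convexity forces $\tilde{\rho}_\epsilon\to\tilde{\rho}$ $\mathbb{P}$-a.s.\ in $L^1([0,T]\times\mathcal{D})$. Combined with the uniform bound (\ref{4.3}) and the Vitali theorem, this strong convergence extends to $L^p(\widetilde{\Omega};L^q([0,T]\times\mathcal{D}))$ for any $q<\beta+1$.

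Once the strong convergence of $\tilde{\rho}_\epsilon$ is secured, the identification of the limits of the pressure and stochastic terms is routine. First, $\tilde{\rho}_\epsilon^\gamma+\delta\tilde{\rho}_\epsilon^\beta\to\tilde{\rho}^\gamma+\delta\tilde{\rho}^\beta$ strongly in $L^1([0,T]\times\mathcal{D})$ by the uniform integrability supplied by Lemma~\ref{lem4.1} and the Vitali theorem, so $\overline{\tilde{\rho}^\gamma}+\delta\overline{\tilde{\rho}^\beta}=\tilde{\rho}^\gamma+\delta\tilde{\rho}^\beta$. Second, for the noise term I would repeat the argument culminating in (\ref{3.39}) at the level $\epsilon\to 0$: writing the difference
\begin{equation*}
\tilde{\rho}_\epsilon f(\tilde{\rho}_\epsilon,\tilde{\rho}_\epsilon\tilde{u}_\epsilon,\tilde{c}_\epsilon,\widetilde{Q}_\epsilon)-\tilde{\rho}f(\tilde{\rho},\tilde{\rho}\tilde{u},\tilde{c},\widetilde{Q}),
\end{equation*}
and estimating its Hilbert--Schmidt norm via condition~(\ref{2.2}) together with the strong convergences of $(\tilde{\rho}_\epsilon,\tilde{\rho}_\epsilon\tilde{u}_\epsilon,\tilde{c}_\epsilon,\widetilde{Q}_\epsilon)$, yields convergence in $L^2_{t,\omega}$ of the integrand, and hence in probability of the stochastic integral. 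A standard martingale-characterization argument, analogous to the one used at the $n\to\infty$ level, then shows that the limit martingale $\widetilde{W}$ appearing in the momentum equation has the required quadratic variation and may thus be written as the It\^o integral of $\tilde{\rho}f$ against a cylindrical Wiener process on an enlarged probability space.

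The hard part is the first step, namely the effective viscous flux identity (\ref{4.29}) together with the renormalized continuity argument in the stochastic setting. The delicate points are (i) controlling the vanishing viscosity remainder $\epsilon\nabla\tilde{\rho}_\epsilon\cdot\nabla\tilde{u}_\epsilon$ via the uniform bound $\sqrt{\epsilon}\tilde{\rho}_\epsilon\in L^p(\Omega;L^2(0,T;H^1))$, (ii) exploiting symmetry of $\widetilde{Q}$ to kill the contribution of $\widetilde{Q}\triangle\widetilde{Q}-\triangle\widetilde{Q}\,\widetilde{Q}$ against the symmetric tensor $\nabla\mathcal{A}[\tilde{\rho}]$ (this is where the traceless-symmetric construction of $Q$ plays its crucial role, as already highlighted in Step~1), and (iii) ensuring that the stochastic contribution to the It\^o expansion of $\int \tilde{\rho}_\epsilon\tilde{u}_\epsilon\cdot\tilde{\phi}\tilde{\psi}\mathcal{A}[\tilde{\rho}_\epsilon]\,dx$ vanishes in expectation, using the square-integrability of the martingale guaranteed by condition (\ref{2.1}) and Lemma~\ref{lem4.1}. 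Once these are in place, the chain of convergences producing (\ref{4.29}) proceeds exactly as in the computation already sketched for the terms $J_{11}^\epsilon,J_{12}^\epsilon,J_{13}^\epsilon$ in the main text, and the conclusion of the proposition follows.
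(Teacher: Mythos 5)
Your outline matches the paper's proof: Section~4 establishes the improved integrability (Lemma~\ref{lem4.1}), the tightness and Skorokhod--Jakubowski step (Proposition~\ref{pro4.1}), the weak continuity of the effective viscous flux~(\ref{4.29}) with the same three delicate points you flag (the $\epsilon\nabla\tilde\rho_\epsilon\cdot\nabla\tilde u_\epsilon$ remainder, skew-symmetry of $\widetilde Q\triangle\widetilde Q-\triangle\widetilde Q\,\widetilde Q$ against the symmetric $\nabla\mathcal{A}[\tilde\rho]$, and the vanishing of the stochastic It\^o correction in expectation), then strong convergence of density via the renormalized continuity equation and the $\rho\ln\rho$ convexity/monotonicity argument of Feireisl--Lions, and finally identification of the pressure and of $\widetilde W$ by repeating the martingale-characterization argument of~(\ref{3.39}) using condition~(\ref{2.2}). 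Your account of Step~2 is in fact more detailed than the paper's one-line pointer to~\cite{Feireisl}, but it is the same argument and the paper's own Proposition~\ref{pro4.1} supplies exactly the weak limits (\ref{4.14*})--(\ref{4.15*}) you need for it.
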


\maketitle
\section{Vanishing artificial pressure}

In this section, we shall pass the artificial pressure coefficient $\delta\rightarrow 0$ to establish the Theorem \ref{thm2.1}. Also, the following uniform bounds hold for process $(\rho_{\delta}, u_{\delta}, c_{\delta}, Q_{\delta})$
\begin{eqnarray}
&&\rho_{\delta}\in L^{p}(\Omega; L^{\infty}(0,T; L^{\gamma}(\mathcal{D}))), \label{5.1}\\
&&\sqrt[\beta]{\delta}\rho_\delta\in L^{p}(\Omega; L^{\infty}(0,T; L^{\beta}(\mathcal{D}))),\label{5.2}\\
&&u_{\delta}\in L^{p}(\Omega; L^{2}(0,T; H^{1}(\mathcal{D}))),\label{5.3}\\
&&\sqrt{\rho_{\delta} }u_{\delta}\in L^{p}(\Omega; L^\infty(0,T; L^{2}(\mathcal{D}))),\label{5.4}\\
&&c_{\delta}\in L^{p}(\Omega; L^{\infty}(0,T; L^{2}(\mathcal{D}))\cap L^{2}(0,T; H^{1}(\mathcal{D}))),\label{5.5}\\
&&Q_{\delta}\in L^{p}(\Omega; L^{\infty}(0,T; H^{1}(\mathcal{D}))\cap L^{2}(0,T; H^{2}(\mathcal{D}))).\label{5.6}
\end{eqnarray}
\begin{lemma}
The approximate sequence $\rho_{\delta}$ satisfies the following estimate
\begin{eqnarray*}
\mathbb{E}\int_{0}^{T}\int_{\mathcal{D}}\rho_{\delta}^{\gamma+\theta}+\delta\rho_{\delta}^{\beta+\theta}dxdt\leq C,
\end{eqnarray*}
where the constant $C$ is independent of $\delta$ and the constant $\theta\in\left(0, {\rm min}\left\{\frac{2\gamma-3}{3}, \frac{\gamma}{3}\right\}\right)$.
\end{lemma}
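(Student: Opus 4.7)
The plan is to mimic the Bogovskii-type argument used in Lemma~\ref{lem4.1} and in Feireisl--Novotn\'y--Petzeltov\'a, but with the test function built from a fractional power of the density, in order to extract an extra factor of $\rho_\delta^\theta$ inside the pressure integral. Concretely, I would apply the It\^o formula to the functional
\begin{eqnarray*}
\Phi(\rho_\delta,\rho_\delta u_\delta)=\int_{\mathcal{D}}\rho_\delta u_\delta\cdot \mathcal{T}\bigl[\rho_\delta^\theta-(\rho_\delta^\theta)_m\bigr]dx,
\end{eqnarray*}
where $\mathcal{T}$ is the Bogovskii operator from \eqref{4.7}--\eqref{4.8}. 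The $\delta$-version of the continuity equation $\partial_t\rho_\delta+{\rm div}(\rho_\delta u_\delta)=0$ (no artificial viscosity anymore) gives $\partial_t\rho_\delta^\theta = -{\rm div}(\rho_\delta^\theta u_\delta) + (1-\theta)\rho_\delta^\theta{\rm div}u_\delta$ in the renormalized sense, which is what produces the time derivative of the test function in the It\^o expansion.

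After expanding, the only term that is not already bounded is the pressure contraction with the test function, which yields exactly
\begin{eqnarray*}
\int_0^T\!\!\int_{\mathcal{D}}\bigl(\rho_\delta^\gamma+\delta\rho_\delta^\beta\bigr)\rho_\delta^\theta\,dxdt
\;=\;\int_0^T\!\!\int_{\mathcal{D}}\rho_\delta^{\gamma+\theta}+\delta\rho_\delta^{\beta+\theta}\,dxdt
\end{eqnarray*}
on the left-hand side. All remaining terms (convective term, viscosity terms, the $Q$-stress, the active term $\sigma^*c_\delta^2 Q_\delta$, the Ericksen stress ${\rm F}(Q_\delta)I_3-\nabla Q_\delta\odot\nabla Q_\delta$, the high-order stress $Q_\delta\triangle Q_\delta-\triangle Q_\delta Q_\delta$, and the mean-value correction $(\rho_\delta^\theta)_m\int\rho_\delta^\gamma+\delta\rho_\delta^\beta$) are then controlled by H\"older, Sobolev embeddings and the uniform bounds \eqref{5.1}--\eqref{5.6}, together with the mapping properties \eqref{4.7}--\eqref{4.8} applied with $f=\rho_\delta^\theta-(\rho_\delta^\theta)_m$; note in particular that $\rho_\delta^\theta\in L^\infty(0,T;L^{\gamma/\theta}(\mathcal{D}))$ by \eqref{5.1}. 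The stochastic term vanishes in expectation, exactly as in Lemma~\ref{lem4.1}, provided its quadratic variation is finite; this is checked via assumption \eqref{2.1*} together with the boundedness of $c_\delta$ and the $L^\beta$-bound $\sqrt[\beta]{\delta}\rho_\delta\in L^p(\Omega;L^\infty_tL^\beta_x)$.

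The constraint $\theta<\min\{(2\gamma-3)/3,\gamma/3\}$ will come entirely from the convective term
\begin{eqnarray*}
\int_0^T\!\!\int_{\mathcal{D}}\rho_\delta u_\delta\otimes u_\delta:\nabla\mathcal{T}\bigl[\rho_\delta^\theta-(\rho_\delta^\theta)_m\bigr]dxdt.
\end{eqnarray*}
Using $\sqrt{\rho_\delta}u_\delta\in L^\infty_tL^2_x$ and $u_\delta\in L^2_tL^6_x$ together with interpolation gives $\rho_\delta|u_\delta|^2\in L^{q_1}_tL^{q_2}_x$ for an explicit pair $(q_1,q_2)$ depending on $\gamma$; on the other hand \eqref{5.1} and \eqref{4.7} yield $\nabla\mathcal{T}[\rho_\delta^\theta]\in L^\infty_tL^{\gamma/\theta}_x$. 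H\"older's inequality in space and time then forces $\theta<(2\gamma-3)/3$, while the weaker restriction $\theta<\gamma/3$ ensures $\gamma/\theta>3$ so that $\nabla\mathcal{T}[\rho_\delta^\theta-(\rho_\delta^\theta)_m]$ embeds into $L^\infty$ where needed (for instance to bound the active/Ericksen stress terms involving $Q_\delta$ and $c_\delta$ via \eqref{5.5}--\eqref{5.6}). This balancing of exponents is the main obstacle: every estimate is standard, but the choice of $\theta$ must simultaneously handle the convective term and the $Q$-tensor stress, and in particular the term $Q_\delta\triangle Q_\delta-\triangle Q_\delta Q_\delta$ tested against $\nabla\mathcal{T}[\rho_\delta^\theta]$, for which \eqref{5.6} (giving $\triangle Q_\delta\in L^2_tL^2_x$ and $Q_\delta\in L^\infty_tL^6_x$) is combined with $\nabla\mathcal{T}[\rho_\delta^\theta]\in L^\infty_tL^{\gamma/\theta}_x$ requiring $\gamma/\theta\geq 3$. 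Finally, taking expectation, rearranging, and using the uniform bounds gives the desired $\delta$-independent estimate.
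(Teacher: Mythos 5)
Your overall strategy -- applying the It\^o formula to $\int_{\mathcal D}\rho_\delta u_\delta\cdot\mathcal T[\rho_\delta^\theta-(\rho_\delta^\theta)_m]\,dx$, using the renormalized continuity equation in the form $\partial_t\rho_\delta^\theta+{\rm div}(\rho_\delta^\theta u_\delta)=(1-\theta)\rho_\delta^\theta{\rm div}u_\delta$, extracting the pressure contribution $\int\rho_\delta^{\gamma+\theta}+\delta\rho_\delta^{\beta+\theta}$, and controlling the remaining terms via H\"older and the mapping properties of $\mathcal T$ -- is exactly what the paper does, so the method is correct.

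Two details are off, though. First, the claim that the restriction $\theta<(2\gamma-3)/3$ comes from the convective term $\int\rho_\delta u_\delta\otimes u_\delta:\nabla\mathcal T[\rho_\delta^\theta]$ is misattributed: the paper's explicit derivation of this constraint is the commutator term $\mathbb E\int_0^t\int_{\mathcal D}\rho_\delta u_\delta\cdot\mathcal T[\rho_\delta^\theta{\rm div}u_\delta]\,dx\,ds$ produced by the renormalization, estimated using $\rho_\delta\in L^\infty_tL^\gamma_x$, $u_\delta\in L^2_tL^6_x$, ${\rm div}u_\delta\in L^2_tL^2_x$ and the embedding $H^{1,\frac{6\gamma}{7\gamma-6}}\hookrightarrow L^{\frac{6\gamma}{5\gamma-6}}$, which forces $\rho_\delta^\theta\in L^{\frac{3\gamma}{2\gamma-3}}$, i.e.\ $\theta\leq(2\gamma-3)/3$. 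Your proposal does not mention this commutator contribution at all, yet it is the term that actually saturates the bound. Second, the statement that $\theta<\gamma/3$ "ensures $\gamma/\theta>3$ so that $\nabla\mathcal T[\rho_\delta^\theta-(\rho_\delta^\theta)_m]$ embeds into $L^\infty$" is wrong as stated: $\mathcal T$ maps $L^p\to W^{1,p}$, so only $\mathcal T[\rho_\delta^\theta]$ gains the $L^\infty$ bound from the Sobolev embedding $W^{1,p}\hookrightarrow L^\infty$ for $p>3$, not its gradient, which stays in $L^{\gamma/\theta}$. You do then use the correct $L^{\gamma/\theta}$ norm in the subsequent exponent count, so the arithmetic comes out right, but the justification as written would not survive scrutiny. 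With these two corrections, the proposal matches the paper's argument.
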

\begin{proof} The proof follows the same line as Lemma \ref{lem4.1}. By the Di Perna-Lions commutator lemmas, we infer that the following equation holds in the weak sense
\begin{eqnarray}
d\left[\rho^\theta_{\delta}-(\rho^\theta_{\delta})_m\right]+{\rm div}(\rho^\theta_{\delta}u_{\delta})dt+\left[(\theta-1)\rho^\theta_{\delta}{\rm div}u_{\delta}-(\rho^\theta_{\delta}{\rm div}u_{\delta})_m\right]dt=0,~ \mathbb{P}~ \mbox{a.s.} \label{5.7}
\end{eqnarray}
Then, applying the operator $\mathcal{T}$ on both sides of (\ref{5.7}), to obtain
\begin{eqnarray}
d\mathcal{T}[\rho^\theta_{\delta}-(\rho^\theta_{\delta})_m]+\mathcal{T}[{\rm div}(\rho^\theta_{\delta}u_{\delta})]dt+(\theta-1)\mathcal{T}[\rho^\theta_{\delta}{\rm div}u_{\delta}-(\rho^\theta_{\delta}{\rm div}u_{\delta})_m]dt=0.\label{5.8}
\end{eqnarray}
Applying the It\^{o} product formula to function $\Phi(\rho_{\delta}u_{\delta},\rho_{\delta}^{\theta})
=\int_{\mathcal{D}}\rho_{\delta}u_{\delta}\cdot\mathcal{T}[\rho_{\delta}^{\theta}-(\rho^\theta_{\delta})_m]dx$, and taking expectation, we have
\begin{eqnarray*}
&&\mathbb{E}\int_{\mathcal{D}}\rho_{\delta}u_{\delta}\cdot \mathcal{T}[ \rho_{\delta}^{\theta}-(\rho^\theta_{\delta})_m]dx\\
&&=\mathbb{E}\int_{0}^{t}\int_{\mathcal{D}}\rho_{\delta}^{\gamma+\theta}+\delta\rho_{\delta}^{\beta+\theta}dxds
-\mathbb{E}\int_{0}^{t}(\rho_{\delta}^{\theta})_{m}\int_{\mathcal{D}}\rho_{\delta}^{\gamma}
+\delta\rho_{\delta}^{\beta}dxds\nonumber \\
&&\quad+\mathbb{E}\int_{\mathcal{D}}m_{0,\delta}\cdot \mathcal{T}[\rho_{0,\delta}^{\theta}-(\rho^\theta_{0,\delta})_m]dx-(\mu_1+\mu_2)\mathbb{E}\int_{0}^{t}\int_{\mathcal{D}}{\rm div} u_{\delta}\cdot\rho_{\delta}^{\theta}dxds\nonumber\\
&&\quad+\mathbb{E}\int_{0}^{t}\int_{\mathcal{D}}\rho_{\delta}u_{\delta}\otimes u_{\delta}:\nabla\mathcal{T}[ \rho_{\delta}^{\theta}-(\rho^\theta_{\delta})_m]dxds-\mu_1 \mathbb{E}\int_{0}^{t}\int_{\mathcal{D}} \nabla u_{\delta}:\nabla  \mathcal{T}[\rho_{\delta}^{\theta}-(\rho^\theta_{\delta})_m]dxds\nonumber \\
&&\quad+\mathbb{E}\int_{0}^{t}\int_{\mathcal{D}} \rho_{\delta}^{\theta}u_{\delta}\mathcal{T}[{\rm div}(\rho_{\delta}u_{\delta})]dxds
-\mathbb{E}\int_{0}^{t}\int_{\mathcal{D}}\sigma^*c_\delta^2Q_\delta:\nabla\mathcal{T}[\rho_{\delta}^{\theta}-(\rho^\theta_{\delta})_m]dxds\nonumber\\
&&\quad+(1-\theta)\mathbb{E}\int_{0}^{t}\int_{\mathcal{D}}\rho_{\delta}u_{\delta}\mathcal{T}[\rho_{\delta}^{\theta}{\rm div}u_{\delta}-(\rho^\theta_{\delta}{\rm div}u_{\delta})_m]dxds\nonumber\\
&&\quad-\mathbb{E}\int_{0}^{t}\int_{\mathcal{D}}(\nabla Q_\delta\odot \nabla Q_\delta-{\rm F(Q_\delta)}{\rm I}_3):\nabla\mathcal{T}[\rho_{\delta}^{\theta}-(\rho^\theta_{\delta})_m]dxds\nonumber\\
&&\quad-\mathbb{E}\int_{0}^{t}\int_{\mathcal{D}}(Q_\delta \triangle Q_\delta-\triangle Q_\delta Q_\delta):\nabla\mathcal{T}[\rho_{\delta}^{\theta}-(\rho^\theta_{\delta})_m]dxds.
\end{eqnarray*}
Our goal is to get the bound of $\mathbb{E}\int_{0}^{t}\int_{\mathcal{D}}\rho_{\delta}^{\gamma+\theta}+\delta\rho_{\delta}^{\beta+\theta}dxds$, which can be achieved after all other terms get controlled. Here we just give a limit amount of details.

For $\theta\leq \frac{2\gamma-3}{3}$, the bounds (\ref{5.1}), (\ref{5.3}), the H\"{o}lder inequality and the Sobolev embedding $H^{1, \frac{6\gamma}{7\gamma-6}}(\mathcal{D})\hookrightarrow L^{\frac{6\gamma}{5\gamma-6}}(\mathcal{D})$ imply
\begin{eqnarray*}
&&\left|\mathbb{E}\int_{0}^{t}\int_{\mathcal{D}}\rho_{\delta}u_{\delta}\mathcal{T}[\rho_{\delta}^{\theta}{\rm div}u_{\delta}]dxds\right|\nonumber\\
&&\leq C\mathbb{E}\int_{0}^{t}\|\rho_{\delta}\|_{L^\gamma}\|u_\delta\|_{L^6}\left\|\mathcal{T}[\rho_{\delta}^{\theta}{\rm div}u_{\delta}]\right\|_{L^{\frac{6\gamma}{5\gamma-6}}}ds\\
&&\leq C\mathbb{E}\left[\sup_{t\in [0,T]}\|\rho_{\delta}\|_{L^\gamma}\int_0^t\|u_\delta\|_{L^6}\|\rho_{\delta}^{\theta}{\rm div}u_{\delta}\|_{L^\frac{6\gamma}{7\gamma-6}}ds\right]\\
&&\leq  C\mathbb{E}\left[\sup_{t\in [0,T]}\|\rho_{\delta}\|_{L^\gamma}\int_0^t\|u_\delta\|_{L^6}\|{\rm div}u_{\delta}\|_{L^2}\|\rho_{\delta}^{\theta}\|_{L^\frac{3\gamma}{2\gamma-3}}ds\right]\\
&&\leq  C\mathbb{E}\left[\sup_{t\in [0,T]}\|\rho_{\delta}\|_{L^\gamma}\|\rho_{\delta}\|_{L^\gamma}^\theta\int_0^t\|u_\delta\|_{L^6}\|{\rm div}u_{\delta}\|_{L^2}ds\right]\leq C,
\end{eqnarray*}
where $C$ is independent of $\delta$. For $\theta<\frac{\gamma}{3}$, using the H\"{o}lder inequality and the bounds (\ref{5.1}), (\ref{5.6}), to get
\begin{eqnarray*}
&&\left|\mathbb{E}\int_{0}^{t}\int_{\mathcal{D}}\sigma^*c_\delta^2Q_\delta:\nabla\mathcal{T}[\rho_{\delta}^{\theta}]dxds\right|\\
&&\leq C\mathbb{E}\int_{0}^{t}\|c_\delta\|_{L^6}^2\|Q_\delta\|_{L^6}\|\rho_{\delta}\|^{\theta}_{L^{2\theta}}ds\\
&&\leq C\mathbb{E}\left(\int_{0}^{t}\|c_\delta\|_{L^6}^2ds\right)^2\mathbb{E}\left[\sup_{t\in [0,T]}\left(\|Q_\delta\|_{L^6}^4+\|\rho_{\delta}\|^{4\theta}_{L^{\gamma}}\right)\right]\leq C,\\
&&\left|\mathbb{E}\int_{0}^{t}\int_{\mathcal{D}}(\nabla Q_\delta\odot \nabla Q_\delta-{\rm F(Q_\delta)}{\rm I}_3):\nabla\mathcal{T}[\rho_{\delta}^{\theta}]dxds\right|\\
&&\leq C\mathbb{E}\int_{0}^{t}(\|\nabla Q_\delta\|_{L^2}\|\nabla Q_\delta\|_{L^6}+\|Q_\delta\|_{L^6})\|\rho_\delta^\theta\|_{L^3}ds\\
&&\leq C\mathbb{E}\left[\sup_{t\in [0,T]}(\| Q_\delta\|_{H^1}^4+\|\rho_{\delta}\|^{4\theta}_{L^{\gamma}})\right]\mathbb{E}\int_{0}^{t}\|\nabla Q_\delta\|_{L^6}^2ds\leq C,\\
&&\left|\mathbb{E}\int_{0}^{t}\int_{\mathcal{D}}(Q_\delta \triangle Q_\delta-\triangle Q_\delta Q_\delta):\nabla\mathcal{T}[\rho_{\delta}^{\theta}]dxds\right|\\
&&\leq C\mathbb{E}\int_{0}^{t}\|Q_\delta\|_{L^6}\|\triangle Q_\delta\|_{L^2}\|\rho_\delta^\theta\|_{L^3}ds\\
&&\leq C\mathbb{E}\left[\sup_{t\in [0,T]}\left(\|Q_\delta\|_{L^6}^4+\|\rho_{\delta}\|^{4\theta}_{L^{\gamma}}\right)\right]\mathbb{E}\int_{0}^{t}\|\triangle Q_\delta\|_{L^2}^2ds\leq C,
\end{eqnarray*}
where $C$ is independent of $\delta$. This completes the proof.
\end{proof}

\subsection{Compactness argument}
Define the cut off functions
\begin{eqnarray*}
T_{k}(z)=kT(\frac{z}{k}),~ k=1,2,3\cdots
\end{eqnarray*}
where $T(z)$ is a smooth concave function on $\mathbb{R}$ such that $T(z)=z$ if $z\leq 1$ and $T(z)=2$ if $z\geq 3$. The definition of $T_k(z)$ implies that \begin{eqnarray}\label{5.9}
T_{k}(z)=\left\{\begin{array}{ll}
z, ~z\leq k,\\
2k, z\geq 3k.
\end{array}\right.
\end{eqnarray}

Here, we define the path space $\mathcal{X}_1=\mathcal{X}_{u}\times\mathcal{X}_{\rho}\times\mathcal{X}_{\rho u}\times \mathcal{X}_c\times \mathcal{X}_{Q}\times\mathcal{X}_{\mathcal{W}}$,
where
\begin{eqnarray*}
&&\mathcal{X}_{\rho}:=L^\infty(0,T; H^{-\frac{1}{2}}(\mathcal{D}))\cap L_w^{\gamma+\theta}(0,T; L^{\gamma+\theta}(\mathcal{D})),
\end{eqnarray*}
and $\mathcal{X}_{u}, \mathcal{X}_{\rho u},  \mathcal{X}_{c}, \mathcal{X}_{Q}, \mathcal{X}_{\mathcal{W}}$ are same as the definition in subsection 3.2. Let $\mathcal{X}=\mathcal{X}_1\times C_{w}([0,T]; L^p(\mathcal{D}))\times L_w^{2}(0,T;L^2(\mathcal{D}))\times L_w^{\frac{\gamma+\theta}{\gamma}}((0,T)\times\mathcal{D})$ for all $1\leq p<\infty$. Similarly, we can define the set of probability measures $\{\nu^\delta\}_{\delta>0}$ as before. Following the same line as previous section to build the compactness result,

\begin{proposition}\label{pro5.1} There exists a new probability space $(\widetilde{\Omega}, \widetilde{\mathcal{F}}, \widetilde{\mathbb{P}})$, a subsequence of $\{\nu^{\delta}\}_{\delta>0 }$ {\rm(still denoted by $\nu^\delta$)} and $\mathcal{X}$-valued measurable random variables
\begin{eqnarray*}
(\tilde{u}_{\delta},\tilde{\rho}_{\delta},\tilde{\rho}_{\delta}\tilde{u}_{\delta}, \tilde{\rho}_{\delta}^\gamma, \tilde{c}_{\delta}, \widetilde{Q}_\delta, T_k(\tilde{\rho}_{\delta}), ~~(\tilde{\rho}_{\delta}T^{'}_k(\tilde{\rho}_{\delta})-T_k(\tilde{\rho}_{\delta})){\rm div}\tilde{u}_{\delta}, \widetilde{\mathcal{W}}_{\delta}),
\end{eqnarray*}
and $(\tilde{u},\tilde{\rho}, \tilde{\rho}\tilde{u}, \tilde{c}, \widetilde{Q}, \widetilde{T}_{1, k}, \widetilde{T}_{2, k}, \widetilde{\mathcal{W}})$ such that
\begin{eqnarray*}
&&\widetilde{\mathbb{P}}\left\{(\tilde{u}_{\delta},\tilde{\rho}_{\delta},\tilde{\rho}_{\delta}\tilde{u}_{\delta}, \tilde{\rho}_{\delta}^\gamma, \tilde{c}_{\delta}, \widetilde{Q}_\delta, T_k(\tilde{\rho}_{\delta}), (\tilde{\rho}_{\delta}T^{'}_k(\tilde{\rho}_{\delta})-T_k(\tilde{\rho}_{\delta})){\rm div}\tilde{u}_{\delta}, \widetilde{\mathcal{W}}_{\delta})\in \cdot\right\}=\nu^{\delta}(\cdot),\\
&&\widetilde{\mathbb{P}}\left\{(\tilde{u},\tilde{\rho}, \tilde{\rho}\tilde{u}, \rho^*, \tilde{c}, \widetilde{Q}, \widetilde{T}_{1, k}, \widetilde{T}_{2, k}, \widetilde{\mathcal{W}})\in \cdot\right\}=\nu(\cdot),
\end{eqnarray*}
where $\nu$ is a Radon measure and $\widetilde{\mathcal{\mathcal{W}}}_{\delta}$ is cylindrical Wiener process, relative to the filtration $\widetilde{\mathcal{F}}_{t}^{\delta}$ generated by the completion of $\sigma(\tilde{u}_{\delta}(s),\tilde{\rho}_{\delta}(s), \tilde{c}_\delta(s), \widetilde{Q}_\delta(s), \widetilde{\mathcal{W}}_{\delta}(s);s\leq t)$, and the following convergence results hold, $\widetilde{\mathbb{P}}$ \mbox{a.s.}
\begin{eqnarray}\label{5.10}(\tilde{u}_{\delta},\tilde{\rho}_{\delta},\tilde{\rho}_{\delta}\tilde{u}_{\delta}, \tilde{c}_{\delta}, \widetilde{Q}_{\delta}, \widetilde{\mathcal{W}}_{\delta})
\rightarrow(\tilde{u},\tilde{\rho}, \tilde{\rho}\tilde{u}, \tilde{c}, \widetilde{Q}, \widetilde{\mathcal{W}}),
\end{eqnarray}
in the topology of $\mathcal{X}_1$, in addition
\begin{eqnarray}
&&\tilde{\rho}_{\delta}^\gamma\rightarrow \rho^*~{\rm in}~ L_w^{\frac{\gamma+\theta}{\gamma}}((0,T)\times\mathcal{D}),\label{5.11}\\
&&T_k(\tilde{\rho}_{\delta})\rightarrow \widetilde{T}_{1, k} ~{\rm in}~C_{w}([0,T]; L^p(\mathcal{D})), ~{\rm for~ all}~1\leq p<\infty,\label{5.12}\\
&&(\tilde{\rho}_{\delta}T^{'}_k(\tilde{\rho}_{\delta})-T_k(\tilde{\rho}_{\delta})){\rm div}\tilde{u}_{\delta}\rightharpoonup\widetilde{T}_{2, k} ~{\rm in}~L^{2}(0,T;L^2(\mathcal{D})).\label{5.13}
\end{eqnarray}
Moreover, the bounds \eqref{5.1}-\eqref{5.6} still hold for $(\tilde{u}_{\delta},\tilde{\rho}_{\delta},\tilde{\rho}_{\delta}\tilde{u}_{\delta}, \tilde{c}_{\delta}, \widetilde{Q}_{\delta})$ uniformly in $\delta$.
\end{proposition}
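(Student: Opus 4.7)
The plan is to mimic the proof of Propositions 3.1 and 4.1, replacing the role of $\epsilon$-estimates by the improved integrability of the density just proved together with the uniform bounds \eqref{5.1}--\eqref{5.6}. The main work is to establish tightness of the sequence of laws $\{\nu^{\delta}\}_{\delta>0}$ on the quasi-Polish space $\mathcal{X}$, after which the Skorokhod--Jakubowski theorem delivers the new probability space, the new random variables and the almost sure convergences \eqref{5.10}, the weak convergences of \eqref{5.11}--\eqref{5.13} being a direct consequence of the Banach--Alaoglu theorem and the Vitali convergence theorem combined with the a.s. convergence of the underlying densities and velocities.

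First I would treat the ``easy'' marginals: the tightness of $\{\nu_{u}^{\delta}\}, \{\nu_{c}^{\delta}\}, \{\nu_{Q}^{\delta}\}$ and of $\{\nu_{\mathcal{W}}\}$ follows verbatim from Claims 2--4 of Lemma 3.4 using the bounds \eqref{5.3}, \eqref{5.5}, \eqref{5.6} together with estimates on $\partial_{t}c_{\delta}$, $\partial_{t}Q_{\delta}$ in $L^{2}(0,T;H^{-1}(\mathcal D))$ and $L^{2}(0,T;L^{3/2}(\mathcal D))$ respectively; the Aubin--Lions lemma and the Banach--Alaoglu theorem then give compact subsets in the corresponding path spaces. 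For $\{\nu_{\rho}^{\delta}\}$, the bound \eqref{5.1} and the just-proved estimate $\mathbb{E}\int_{0}^{T}\int_{\mathcal D}\rho_{\delta}^{\gamma+\theta}\,dx\,dt\leq C$ yields tightness on $L_{w}^{\gamma+\theta}(0,T;L^{\gamma+\theta}(\mathcal D))$; tightness on $L^{\infty}(0,T;H^{-\frac12}(\mathcal D))$ then follows from the continuity equation, since $\partial_{t}\rho_{\delta}=-\Dv(\rho_{\delta}u_{\delta})$ is uniformly bounded in a negative Sobolev space by the H\"{o}lder inequality using \eqref{5.3} and the new integrability.

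The delicate marginal is $\nu_{\rho u}^{\delta}$, for which one decomposes $\rho_{\delta}u_{\delta}=X_{\delta}+Y_{\delta}$ as in Lemma 3.4 Claim 1, but here $Y_{\delta}\equiv 0$ because the artificial viscosity has already vanished, so only the H\"{o}lder continuity in time of $X_{\delta}$ in some $H^{-k}(\mathcal D)$ need be controlled. The stochastic piece is handled by the Burkholder--Davis--Gundy inequality and condition \eqref{2.1*} exactly as in \eqref{3.19*}; the $\Dv(\rho_{\delta}u_{\delta}\otimes u_{\delta})$ term is bounded in $L^{2}(0,T;W^{-1,6\gamma/(5\gamma-6)}(\mathcal D))$ using \eqref{5.1}, \eqref{5.3}, \eqref{5.4}; the $Q$-tensor stress terms are controlled by \eqref{5.5}--\eqref{5.6}; and the pressure term $\nabla(\rho_{\delta}^{\gamma}+\delta\rho_{\delta}^{\beta})$ is uniformly bounded in $L^{(\gamma+\theta)/\gamma}(0,T;W^{-1,(\gamma+\theta)/\gamma}(\mathcal D))$ by the new integrability. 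Embedding into $H^{-k}(\mathcal D)$ for $k\geq 5/2$ and applying Aubin--Lions together with the bound on $\rho_{\delta}u_{\delta}$ in $L^{\infty}(0,T;L^{2\gamma/(\gamma+1)}(\mathcal D))$ gives tightness on $C([0,T];H^{-1}(\mathcal D))$.

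For the truncated quantities, tightness of the law of $T_{k}(\rho_{\delta})$ on $C_{w}([0,T];L^{p}(\mathcal D))$ requires both the uniform $L^{\infty}(0,T;L^{p}(\mathcal D))$ bound (which is trivial since $|T_{k}(z)|\leq 2k$) and an equicontinuity in a negative Sobolev space; the latter comes from the renormalised continuity equation satisfied by $T_{k}(\rho_{\delta})$, whose right-hand side involves $(\rho_{\delta}T_{k}'(\rho_{\delta})-T_{k}(\rho_{\delta}))\Dv u_{\delta}$, uniformly bounded in $L^{2}(0,T;L^{2}(\mathcal D))$ by \eqref{5.3} and the boundedness of $T_{k}$. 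This last bound simultaneously gives the tightness of the law of $(\rho_{\delta}T_{k}'(\rho_{\delta})-T_{k}(\rho_{\delta}))\Dv u_{\delta}$ on $L_{w}^{2}(0,T;L^{2}(\mathcal D))$. Once all marginals are tight the product measure $\nu^{\delta}$ is tight on $\mathcal{X}$; applying Skorokhod--Jakubowski, identifying the cylindrical Wiener process on the new probability space by the same argument as in Proposition 3.1, and passing the uniform estimates \eqref{5.1}--\eqref{5.6} to the new variables by Fatou and lower semicontinuity of norms completes the proof. The main obstacle I expect is the tightness of $\nu_{\rho u}^{\delta}$ precisely because the $\sqrt{\epsilon}\nabla\rho$-type estimate is no longer available to close the time regularity, and one must instead lean carefully on the improved integrability of $\rho_{\delta}$ from the previous lemma.
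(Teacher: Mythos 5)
Your plan is broadly correct in its general architecture — identify the critical marginal, control the time regularity of the momentum in a negative Sobolev space, invoke Aubin--Lions, Skorokhod--Jakubowski, and carry the uniform bounds to the new probability space — and the treatment of $u_\delta$, $c_\delta$, $Q_\delta$, $\rho_\delta$ and the two truncated quantities is in line with what the paper does. But there is a genuine error at precisely the step you flagged as delicate, the tightness of $\nu^\delta_{\rho u}$, and it concerns the artificial pressure.

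You write that ``here $Y_\delta\equiv 0$ because the artificial viscosity has already vanished,'' and then absorb the term $\delta\nabla\rho_\delta^\beta$ into $X_\delta$ with the claim that $\nabla(\rho_\delta^\gamma+\delta\rho_\delta^\beta)$ is uniformly bounded in $L^{(\gamma+\theta)/\gamma}(0,T;W^{-1,(\gamma+\theta)/\gamma}(\mathcal D))$. This conflates the $\epsilon$-viscosity (which has indeed vanished) with the $\delta$-artificial pressure (which is still there) and, more importantly, misstates the available integrability: the improved-integrability lemma gives $\mathbb{E}\int_0^T\int_{\mathcal D}\rho_\delta^{\gamma+\theta}+\delta\rho_\delta^{\beta+\theta}\,dx\,dt\le C$, so $\rho_\delta^\gamma$ is bounded in $L^{(\gamma+\theta)/\gamma}$ as you say, but $\delta\rho_\delta^\beta$ is only controlled in $L^{(\beta+\theta)/\beta}((0,T)\times\mathcal D)$, where $\|\delta\rho_\delta^\beta\|_{L^{(\beta+\theta)/\beta}}\lesssim\delta^{\theta/(\beta+\theta)}$; since $\beta>\gamma$ we have $(\beta+\theta)/\beta<(\gamma+\theta)/\gamma$, so the exponent you claim does not follow. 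The paper instead keeps the decomposition with $Y_\delta=\delta\int_0^t\nabla\rho_\delta^\beta\,ds$, excludes the artificial pressure from $X_\delta$, and uses the bound $\sqrt[\beta]{\delta}\rho_\delta\in L^p(\Omega;L^\infty(0,T;L^\beta))$ together with the above integrability to show $Y_\delta\to 0$ $\mathbb{P}$-a.s.\ in $C([0,T];H^{-1,(\beta+\theta)/\beta}(\mathcal D))$, hence in distribution; tightness of the sum is then recovered exactly as in Lemma 4.2 via the $\mathcal K_{1}\cap(\mathcal K_{2}+\mathcal K)$ set construction. Your approach could in principle be patched by keeping $\delta\nabla\rho_\delta^\beta$ in $X_\delta$ but working at the correct exponent $(\beta+\theta)/\beta$, which yields only a small H\"older exponent $\alpha<\theta/(\beta+\theta)$; that still feeds into Aubin--Lions, but as written your bound is wrong and the ``$Y_\delta\equiv 0$'' claim signals a misidentification of which approximation parameter is being removed at this stage.

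Two smaller remarks: your justification for the tightness of $\nu^\delta_\rho$ on $L^\infty(0,T;H^{-1/2})$ via the continuity equation is plausible but should be stated carefully, since $\partial_t\rho_\delta=-\Dv(\rho_\delta u_\delta)$ only gives equicontinuity in time, and the compactness in $L^\infty(0,T;H^{-1/2})$ requires combining this with the $L^\infty(0,T;L^\gamma)$ bound and a suitable Aubin--Lions argument as in the earlier claims. Also, for the truncated quantity $(\rho_\delta T_k'(\rho_\delta)-T_k(\rho_\delta))\Dv u_\delta$, the target topology in the proposition is $L^2_w(0,T;L^2)$, so the Banach--Alaoglu argument alone suffices and no equicontinuity is needed there — your reasoning is consistent with the paper's on this point.
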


\begin{lemma}\label{lem5.2} The set of induced laws $\{\nu^{\delta}\}_{\delta>0}$ is tight on path space $\mathcal{X}$.
\end{lemma}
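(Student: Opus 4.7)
The plan is to follow the same component-by-component scheme used in Lemmas \ref{lem3.4} and \ref{lem4.2}, decomposing $\mathcal{X}$ into its factors and establishing tightness of each marginal law separately. The bulk of the work is a routine adaptation of what has already been done: tightness of $\{\nu^\delta_u\}$ on $\mathcal{X}_u$ and of $\{\nu^\delta_{\tilde\rho_\delta^\gamma}\}$ on $L^{\frac{\gamma+\theta}{\gamma}}_w((0,T)\times\mathcal{D})$ is immediate from the Banach--Alaoglu theorem applied to the uniform bounds \eqref{5.3} and Lemma 5.1 respectively; tightness on $\mathcal{X}_c$ and $\mathcal{X}_Q$ follows verbatim from Claims 2 and 3 of Lemma \ref{lem3.4} by using the corresponding equations to control the time derivatives in negative Sobolev norms and invoking Aubin--Lions. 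For $\{\nu^\delta_{\tilde\rho_\delta}\}$, one part (weak convergence in $L^{\gamma+\theta}$) is Banach--Alaoglu from Lemma 5.1, while the $L^\infty(0,T;H^{-1/2}(\mathcal{D}))$ compactness requires showing H\"older continuity in a negative Sobolev norm, which now follows directly from the mass equation $\partial_t\tilde\rho_\delta = -{\rm div}(\tilde\rho_\delta\tilde u_\delta)$ since $\tilde\rho_\delta\tilde u_\delta \in L^p(\Omega;L^\infty(0,T;L^{\frac{2\gamma}{\gamma+1}}(\mathcal{D})))$ (no viscous $\epsilon\triangle\rho$ term survives here, so the argument is actually \emph{simpler} than in Lemma \ref{lem4.2}).

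For the momentum tightness $\{\nu^\delta_{\tilde\rho_\delta\tilde u_\delta}\}$ on $\mathcal{X}_{\rho u}=C([0,T];H^{-1}(\mathcal{D}))$, I would repeat the decomposition $\tilde\rho_\delta\tilde u_\delta = X^\delta$ (no $Y^\delta$ since $\epsilon=0$) with $X^\delta$ defined by the momentum equation \eqref{Equ4.1}$_3$. Each deterministic piece is controlled in $L^2(0,T;H^{-k}(\mathcal{D}))$ for $k\ge 5/2$ via the bounds \eqref{5.1}--\eqref{5.6}, Lemma 5.1 (to give $\nabla(\tilde\rho_\delta^\gamma+\delta\tilde\rho_\delta^\beta) \in L^p(\Omega;L^{\frac{\gamma+\theta}{\gamma}}(0,T;H^{-k}(\mathcal{D})))$), and the H\"older and Sobolev embedding inequalities exactly as in Claim 1 of Lemma \ref{lem3.4}; the stochastic contribution is estimated via Burkholder--Davis--Gundy together with \eqref{2.1*} to get $C^\alpha([0,T];H^{-k}(\mathcal{D}))$ regularity for $\alpha<1/2$. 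Combining with $\tilde\rho_\delta\tilde u_\delta \in L^p(\Omega;L^\infty(0,T;L^{\frac{2\gamma}{\gamma+1}}(\mathcal{D})))$ and the compact embedding $L^\infty(0,T;L^{\frac{2\gamma}{\gamma+1}})\cap C^\alpha([0,T];H^{-k})\hookrightarrow C([0,T];H^{-1})$ from Aubin--Lions, Chebyshev closes the argument.

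The genuinely new ingredient is the tightness of the laws of $T_k(\tilde\rho_\delta)$ on $C_w([0,T];L^p(\mathcal{D}))$ for all $1\le p<\infty$ and of $(\tilde\rho_\delta T_k'(\tilde\rho_\delta)-T_k(\tilde\rho_\delta)){\rm div}\tilde u_\delta$ on $L_w^2(0,T;L^2(\mathcal{D}))$. For the latter, the cut-off \eqref{5.9} makes $\tilde\rho_\delta T_k'(\tilde\rho_\delta)-T_k(\tilde\rho_\delta)$ uniformly bounded in $L^\infty([0,T]\times\mathcal{D})$ (its support lies where $\tilde\rho_\delta\le 3k$ and both $\tilde\rho_\delta T_k'$ and $T_k$ are bounded there), while ${\rm div}\tilde u_\delta$ is uniformly bounded in $L^p(\Omega;L^2(0,T;L^2(\mathcal{D})))$ by \eqref{5.3}; Banach--Alaoglu and Chebyshev then yield tightness in the weak $L^2(0,T;L^2)$ topology. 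For the $T_k(\tilde\rho_\delta)$-tightness, the key observation is that the renormalized continuity equation (valid since $\tilde\rho_\delta$ satisfies $(\ref{Equ4.1})_2$ and $\tilde u_\delta\in L^2(0,T;H^1)$) gives
\begin{equation*}
\partial_t T_k(\tilde\rho_\delta)+{\rm div}(T_k(\tilde\rho_\delta)\tilde u_\delta)+(\tilde\rho_\delta T_k'(\tilde\rho_\delta)-T_k(\tilde\rho_\delta)){\rm div}\tilde u_\delta=0,
\end{equation*}
from which $\partial_t T_k(\tilde\rho_\delta)$ is bounded in $L^p(\Omega;L^2(0,T;W^{-1,q}(\mathcal{D})))$ for some $q>1$ (using $T_k(\tilde\rho_\delta)\tilde u_\delta\in L^2(0,T;L^q)$ and the $L^\infty$ bound on the renormalization defect times the $L^2$ bound on ${\rm div}\tilde u_\delta$). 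Combined with the uniform bound $\|T_k(\tilde\rho_\delta)\|_{L^\infty([0,T]\times\mathcal{D})}\le 2k$, this produces equicontinuity of $t\mapsto \langle T_k(\tilde\rho_\delta)(t),\varphi\rangle$ for $\varphi$ in a dense subset of $(L^p)^*$, which by the Arzel\`a--Ascoli theorem in the weak topology (cf.\ the characterization of $C_w$) yields relative compactness.

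The main obstacle is the last step: verifying the hypotheses of a weak Arzel\`a--Ascoli / Strauss-type criterion in $C_w([0,T];L^p)$ requires coupling the uniform $L^\infty$-in-$(t,x)$ bound on $T_k(\tilde\rho_\delta)$ with the equicontinuity in a negative Sobolev norm derived from the renormalized equation, and then transferring this to \emph{weak} continuity against all $L^{p'}$-test functions by density. Once this is done, a Chebyshev argument on the joint bound closes the tightness of the full product measure $\nu^\delta$, so that $\{\nu^\delta\}_{\delta>0}$ is tight on $\mathcal{X}$ and the Skorokhod--Jakubowski theorem applies to give Proposition \ref{pro5.1}.
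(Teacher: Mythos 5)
Your scheme is in line with the paper's, but you mis-state the decomposition of the momentum: the paper also writes $\tilde\rho_\delta\tilde u_\delta=X_\delta+Y_\delta$ at this stage, only now $Y_\delta=\delta\int_0^t\nabla\tilde\rho_\delta^\beta\,ds$ rather than the $\epsilon$-viscous term. The reason is that the artificial-pressure gradient does \emph{not} enjoy the integrability you claim: from Lemma 5.1 one gets $\|\delta\tilde\rho_\delta^\beta\|_{L^{(\beta+\theta)/\beta}((0,T)\times\mathcal{D})}\leq C\delta^{\theta/(\beta+\theta)}$, and since $\beta>\gamma$ the exponent $(\beta+\theta)/\beta$ is strictly smaller than $(\gamma+\theta)/\gamma$, so the bound $\nabla(\tilde\rho_\delta^\gamma+\delta\tilde\rho_\delta^\beta)\in L^p(\Omega;L^{(\gamma+\theta)/\gamma}(0,T;H^{-k}))$ that you assert is not available. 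Folding $\delta\nabla\tilde\rho_\delta^\beta$ into $X^\delta$ can still be made to work (you then get a H\"older exponent bounded by $\theta/(\beta+\theta)$ for that contribution, which is positive and suffices for Aubin--Lions), but the paper's choice of isolating $Y_\delta$ and exploiting $Y_\delta\to 0$ in $C([0,T];H^{-1,(\beta+\theta)/\beta}(\mathcal{D}))$ almost surely---whence tightness of its law is automatic---is cleaner and avoids the exponent bookkeeping. One more small inaccuracy: the quantity $\tilde\rho_\delta T_k'(\tilde\rho_\delta)-T_k(\tilde\rho_\delta)$ is not supported in $\{\tilde\rho_\delta\leq 3k\}$ (it equals $-2k$ on $\{\tilde\rho_\delta\geq 3k\}$), but it \emph{is} uniformly bounded in $L^\infty$, which is all you actually use. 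The remaining claims---tightness of the marginals of $u,\rho,c,Q$, the weak-$L^{(\gamma+\theta)/\gamma}$ tightness of $\tilde\rho_\delta^\gamma$, the Banach--Alaoglu argument for the renormalization defect, and the use of the renormalized continuity equation to bound $\partial_tT_k(\tilde\rho_\delta)$ in $L^2(0,T;W^{-1,q})$ and obtain equicontinuity in $C_w([0,T];L^p)$---are correct, and in fact more explicit than the paper, which only points back to Claim~2 of Lemma~\ref{lem3.4} for the last point.
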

\begin{proof} Observe that the argument used in Lemma \ref{lem4.2} can be adopted. Decompose $\rho_{\delta}u_\delta=X_\delta+Y_\delta$, where
\begin{eqnarray*}
&&X_\delta=m_{0,\delta}+\int_{0}^{t}-{\rm div}(\rho_\delta u_\delta\otimes u_\delta)-\nabla\rho^\gamma_\delta
 +\mu_1\Delta u_\delta+(\mu_1+\mu_2)\nabla ({\rm div} u_\delta)\\&&\qquad+\nabla\cdot({\rm F}(Q_\delta){\rm I}_3-\nabla Q_\delta\odot \nabla Q_\delta)+\nabla\cdot(Q_\delta\triangle Q_\delta-\triangle Q_\delta Q_\delta)+\sigma^*\nabla\cdot(c_\delta^2Q_\delta)ds\\ &&\qquad+\int_{0}^{t}\rho_\delta f(\rho_\delta,\rho_\delta u_\delta, c_\delta, Q_\delta)d\mathcal{W},
\end{eqnarray*}
and
\begin{eqnarray*}
Y_\delta=\delta\int_{0}^{t}\nabla \rho_\delta^\beta ds.
\end{eqnarray*}
Also, the process $X_\delta$ has the bound
\begin{eqnarray*}
\mathbb{E}\|X_{\delta}(t)\|_{C^{\alpha}([0,T]; H^{-l}(\mathcal{D}))}\leq C~ {\rm for~ any} ~\alpha\in [0,\frac{1}{2}), ~l\geq\frac{5}{2}.
\end{eqnarray*}
For the process $Y_\delta$, using the bound (\ref{5.2}), we have as $\delta\rightarrow 0$
\begin{eqnarray*}
\delta\rho_{\delta}^\beta\rightarrow 0 ~{\rm in}~L^{\frac{\beta+\theta}{\beta}}((0,T)\times\mathcal{D}),~\mathbb{P}~ \mbox{a.s.}
\end{eqnarray*}
which implies
\begin{eqnarray*}
\|Y_{\delta}\|_{C([0,T];H^{-1,\frac{\beta+\theta}{\beta}})}=\sup_{t\in [0,T]}\left\|\int_{0}^{t}\delta \nabla\rho_{\delta}ds\right\|_{H^{-1,\frac{\beta+\theta}{\beta}}}\\
\leq \int_{0}^{T}\|\delta \nabla\rho_{\delta}\|_{H^{-1,\frac{\beta+\theta}{\beta}}}dt\rightarrow 0,~ {\rm as}~ \delta\rightarrow 0,~\mathbb{P}~ \mbox{a.s.}
\end{eqnarray*}
this convergence gives $Y_{\delta}\rightarrow 0$ in $C([0,T], H^{-1,\frac{\beta+\theta}{\beta}}(\mathcal{D}))$ in the sense of distribution.

On the other hand, using the boundness of $T_k$ and $(\tilde{\rho}_{\delta}T^{'}_k(\tilde{\rho}_{\delta})-T_k(\tilde{\rho}_{\delta})){\rm div}\tilde{u}_{\delta}$ on spaces $C([0,T]; L^p(\mathcal{D}))$ and $L^{2}(0,T;L^2(\mathcal{D}))$ respectively, we can show the sequence of probability measures $\mathbb{P}\circ(T_k(\tilde{\rho}_{\delta}))^{-1}$ and $\mathbb{P}\circ((\tilde{\rho}_{\delta}T^{'}_k(\tilde{\rho}_{\delta})-T_k(\tilde{\rho}_{\delta})){\rm div}\tilde{u}_{\delta})^{-1}$ are tight on path spaces $C_{w}([0,T]; L^p(\mathcal{D}))$ and $L_w^{2}(0,T;L^2(\mathcal{D}))$, see Claim 2, Lemma \ref{lem3.4}. This completes the proof.
\end{proof}
\textit{Proof of Proposition \ref{pro5.1}}. The proof follows the same line of the Proposition \ref{pro4.1}.

\subsection{Passing limit for $\delta\rightarrow 0$}\

Note that,
\begin{align*}
\widetilde{\mathbb{E}}\int_0^t\langle\delta\tilde{\rho}_{\delta}^\beta, \nabla \phi\rangle ds&\leq \delta^\frac{\theta}{\theta+\beta}\|\nabla \phi\|_{L^\infty}\widetilde{\mathbb{E}}\int_0^t\int_{\mathcal{D}}\delta^{\frac{\beta}{\theta+\beta}}\tilde{\rho}_{\delta}^\beta dx ds\\
&\leq C\delta^\frac{\theta}{\theta+\beta}\|\nabla \phi\|_{L^\infty}\widetilde{\mathbb{E}}\int_0^t\int_{\mathcal{D}}\delta\tilde{\rho}_{\delta}^{\beta+\theta} dxds\\
&\leq C\delta^\frac{\theta}{\theta+\beta}\|\nabla \phi\|_{L^\infty}\rightarrow 0, ~{\rm as}~\delta\rightarrow 0,
\end{align*}
this convergence result together with Proposition \ref{pro5.1}, using the same argument as subsection 3.3, to conclude that there exists an $L_2(\mathcal{H},H^{-l})$-valued process $\widetilde{W}$ and $L^{\frac{\gamma+\theta}{\gamma}}$-valued pressure $\rho^*$ such that $(\tilde{u}, \tilde{\rho} \tilde{u}, \tilde{c}, \widetilde{Q} , \widetilde{W})$ satisfies the momentum equation $\widetilde{\mathbb{P}}$ a.s.
\begin{eqnarray}\label{5.20}
&&\partial_{t}(\tilde{\rho} \tilde{u})+{\rm div}(\tilde{\rho }\tilde{u}\otimes \tilde{u})+\nabla \rho^*
 =\mu_1\Delta \tilde{u}+(\mu_1+\mu_2)\nabla ({\rm div} \tilde{u})+\nabla\cdot({\rm F}(\widetilde{Q}){\rm I}_3-\nabla \widetilde{Q}\odot \nabla \widetilde{Q})\nonumber\\&&\qquad \qquad\qquad \qquad\qquad \qquad\quad+\nabla\cdot(\widetilde{Q}\triangle \widetilde{Q}-\triangle \widetilde{Q} \widetilde{Q})+\sigma^*\nabla\cdot(\tilde{c}^2\widetilde{Q}) +\frac{d\widetilde{W}}{dt},
\end{eqnarray}
in the weak sense.

The proof of Theorem {\ref{thm2.1}} will be completed once we build the strong convergence of density, to identify the pressure term and the stochastic term in equation (\ref{5.20}). Following the idea of \cite{Feireisl,Lions}, the proof of strong convergence of density shall be obtained by three steps.

\noindent{\bf Step 1}. Weak continuity of the effective viscous flow.

Choosing $b=T_k(\tilde{\rho}_\delta)$ in the re-normalized continuity equation, it holds $\widetilde{\mathbb{P}}$ a.s. in the weak sense
\begin{eqnarray}\label{5.21}
dT_k(\tilde{\rho}_\delta)+{\rm div}(T_k(\tilde{\rho}_\delta)\tilde{u}_\delta)dt+(T'_k(\tilde{\rho}_\delta)\tilde{\rho}_\delta-T_k(\tilde{\rho}_\delta)){\rm div}\tilde{u}_{\delta}dt=0.
\end{eqnarray}
In addition, (\ref{5.12}) implies
\begin{eqnarray}\label{5.22*}
T_k(\tilde{\rho}_\delta)\rightarrow\widetilde{T}_{1,k}~ {\rm in }~C([0,T]; H^{-1}(\mathcal{D})).
\end{eqnarray}
Then, combining (\ref{5.10}), (\ref{5.22*}) and (\ref{5.13}), letting $\delta\rightarrow 0$ in (\ref{5.21}), to obtain that
\begin{eqnarray}\label{5.22}
d\widetilde{T}_{1,k}+{\rm div} (\widetilde{T}_{1,k}\tilde{u})dt+\widetilde{T}_{2,k}dt=0,
\end{eqnarray}
holds $\widetilde{\mathbb{P}}$ a.s. in the weak sense.  {We aim to get}
\begin{eqnarray}\label{5.23}
&&\lim_{\delta\rightarrow 0}\widetilde{\mathbb{E}}\int_{0}^{T}\tilde{\psi}(t)\int_{\mathcal{D}}\tilde{\phi}(\tilde{\rho}_{\delta}^{\gamma}-(\mu_2+2\mu_1){\rm div} \tilde{u}_{\delta})T_{k}(\tilde{\rho}_{\delta})dxdt\nonumber \\&&=
\widetilde{\mathbb{E}}\int_{0}^{T}\tilde{\psi}(t)\int_{\mathcal{D}}\tilde{\phi}(\rho^*-(\mu_2+2\mu_1){\rm div} \tilde{u})\widetilde{T}_{1,k}dxdt,
\end{eqnarray}
where the functions $\tilde{\psi}, \tilde{\phi}$ are the same as in \eqref{4.29}. The proof of (\ref{5.23}) is very similar to that of the argument (\ref{4.29}). Here, we skip it.

\noindent{\bf Step 2}. Re-normalized solution.

Define the oscillations defect measure related to the family $\{\tilde{\rho}_{\delta}\}$ by
\begin{eqnarray*}
\mathcal{O}_{\gamma+1}[\tilde{\rho}_{\delta}\rightarrow\tilde{\rho}](\mathcal{D})=\sup_{k\geq 1}\left(\limsup_{\delta\rightarrow 0^+}\widetilde{\mathbb{E}}\int_{0}^T\int_{\mathcal{D}}|T_{k}(\tilde{\rho}_{\delta})-T_{k}(\tilde\rho)|^{\gamma+1}dxdt\right).
\end{eqnarray*}
\begin{lemma}\!\!\!{\rm \cite[Lemma 5.3]{DWang}}\label{lem5.3} There exists a constant $C$ independent of $k$ such that
\begin{eqnarray*}
\mathcal{O}_{\gamma+1}[\tilde{\rho}_{\delta}\rightarrow\tilde{\rho}](\mathcal{D})\leq C.
\end{eqnarray*}
\end{lemma}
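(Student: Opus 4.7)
The plan is to leverage the weak continuity of the effective viscous flux \eqref{5.23} obtained in Step 1 and combine it with a monotonicity/convexity argument for the map $z\mapsto z^\gamma$ composed with the truncations $T_k$.

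First I would rewrite \eqref{5.23} (after selecting an appropriate approximation of the indicator of $[0,T]\times\mathcal{D}$ by $\tilde{\psi}\tilde{\phi}$ and passing to the limit in the truncation) as
\begin{equation*}
\lim_{\delta\to 0}\widetilde{\mathbb{E}}\int_0^T\!\!\int_{\mathcal{D}}\bigl[\tilde{\rho}_\delta^\gamma T_k(\tilde{\rho}_\delta)-\rho^*\,\widetilde{T}_{1,k}\bigr]dxdt=(2\mu_1+\mu_2)\lim_{\delta\to 0}\widetilde{\mathbb{E}}\int_0^T\!\!\int_{\mathcal{D}}\bigl[T_k(\tilde{\rho}_\delta){\rm div}\tilde{u}_\delta-\widetilde{T}_{1,k}{\rm div}\tilde{u}\bigr]dxdt.
\end{equation*}
Denoting by $\overline{\tilde{\rho}^\gamma T_k(\tilde{\rho})}$ the weak $L^1$-limit of $\tilde{\rho}_\delta^\gamma T_k(\tilde{\rho}_\delta)$ (which exists by the uniform bound of $\tilde{\rho}_\delta^{\gamma+\theta}$ combined with $T_k$-boundedness), the left side equals $\widetilde{\mathbb{E}}\int\!\!\int[\overline{\tilde{\rho}^\gamma T_k(\tilde{\rho})}-\rho^*\widetilde{T}_{1,k}]$. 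Subtracting $\widetilde{\mathbb{E}}\int\!\!\int[\overline{\tilde{\rho}^\gamma}(T_k(\tilde{\rho})-\widetilde{T}_{1,k})]$ from both sides and using that $z\mapsto z^\gamma$ and $z\mapsto T_k(z)$ are monotone non-decreasing (hence the product difference stays nonnegative a.e.), I obtain the key inequality
\begin{equation*}
\widetilde{\mathbb{E}}\int_0^T\!\!\int_{\mathcal{D}}\bigl[\overline{\tilde{\rho}^\gamma T_k(\tilde{\rho})}-\overline{\tilde{\rho}^\gamma}\,\widetilde{T}_{1,k}\bigr]dxdt\leq(2\mu_1+\mu_2)\widetilde{\mathbb{E}}\int_0^T\!\!\int_{\mathcal{D}}\bigl[\widetilde{T}_{1,k}{\rm div}\tilde{u}-\overline{T_k(\tilde{\rho}){\rm div}\tilde{u}}\bigr]dxdt+\mathcal{R}_k,
\end{equation*}
where $\mathcal{R}_k$ collects the remaining Q-tensor and concentration contributions, which are uniformly bounded thanks to \eqref{5.5}, \eqref{5.6}.

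Next I would invoke the elementary algebraic inequality, valid for $\gamma\geq 3/2$ and all $a,b\geq 0$,
\begin{equation*}
|T_k(a)-T_k(b)|^{\gamma+1}\leq (a^\gamma-b^\gamma)(T_k(a)-T_k(b)),
\end{equation*}
which upon integration and passage to the $\delta$-weak limit yields
\begin{equation*}
\limsup_{\delta\to 0}\widetilde{\mathbb{E}}\int_0^T\!\!\int_{\mathcal{D}}|T_k(\tilde{\rho}_\delta)-T_k(\tilde{\rho})|^{\gamma+1}dxdt\leq \widetilde{\mathbb{E}}\int_0^T\!\!\int_{\mathcal{D}}\bigl[\overline{\tilde{\rho}^\gamma T_k(\tilde{\rho})}-\overline{\tilde{\rho}^\gamma}\,\widetilde{T}_{1,k}\bigr]dxdt,
\end{equation*}
using also that $T_k(\tilde{\rho})\leq\widetilde{T}_{1,k}$ almost everywhere (by concavity of $T_k$ and Jensen's inequality on the Young measure generated by $\tilde{\rho}_\delta$). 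Plugging in the bound from the previous step and applying H\"older's inequality, I control the right-hand side by
\begin{equation*}
C\bigl(\|{\rm div}\tilde{u}\|_{L^2([0,T]\times\mathcal{D})}+1\bigr)\cdot\|\widetilde{T}_{1,k}-\overline{T_k(\tilde{\rho})}\|_{L^2([0,T]\times\mathcal{D})}+\mathcal{R}_k,
\end{equation*}
and since the $L^2$-norm of $T_k(\tilde{\rho}_\delta)-T_k(\tilde{\rho})$ is dominated by the $L^{\gamma+1}$-norm (as $\gamma+1>2$ for $\gamma>3/2$) times a factor $|\mathcal{D}|^{(\gamma-1)/(2(\gamma+1))}$, an absorption argument closes the estimate with a constant $C$ independent of $k$.

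The main obstacle will be the passage from the weak-flux identity to the monotonicity inequality in the stochastic setting, because the weak limits $\overline{\tilde{\rho}^\gamma T_k(\tilde{\rho})}$ and $\overline{T_k(\tilde{\rho}){\rm div}\tilde{u}}$ must be identified on the new probability space (which requires invoking the Vitali convergence theorem together with the uniform higher integrability $\tilde{\rho}_\delta\in L^{\gamma+\theta}$); a secondary technical point is verifying that the Q-tensor remainder $\mathcal{R}_k$ admits a $k$-independent bound, which follows from the uniform estimates \eqref{5.5}-\eqref{5.6} combined with $|T_k|\leq 2k$ being compensated by the $L^\infty H^1\cap L^2 H^2$ regularity of $\widetilde{Q}$.
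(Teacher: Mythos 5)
The paper does not supply its own proof of this lemma but cites \cite[Lemma 5.3]{DWang}; your sketch follows the Feireisl--DWang strategy, which is indeed the right one, but it contains a sign error that would prevent the argument from closing.

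The concavity of $T_k$ gives $\widetilde{T}_{1,k}\leq T_k(\tilde\rho)$, \emph{not} $T_k(\tilde\rho)\leq\widetilde{T}_{1,k}$ as you assert: $\widetilde{T}_{1,k}$ is by definition the weak limit of $T_k(\tilde\rho_\delta)$, i.e.\ the average of $T_k$ against the Young measure generated by $\tilde\rho_\delta$, and Jensen applied to the concave $T_k$ says the average of $T_k$ is at most $T_k$ of the average. The direction is essential in the step where you pass from $\limsup_\delta\widetilde{\mathbb{E}}\int\!\!\int(\tilde\rho_\delta^\gamma-\tilde\rho^\gamma)(T_k(\tilde\rho_\delta)-T_k(\tilde\rho))\,dxdt$ to $\widetilde{\mathbb{E}}\int\!\!\int\bigl[\overline{\tilde\rho^\gamma T_k(\tilde\rho)}-\overline{\tilde\rho^\gamma}\,\widetilde{T}_{1,k}\bigr]dxdt$; what makes the latter an upper bound of the former is that the discrepancy $(\rho^*-\tilde\rho^\gamma)(\widetilde{T}_{1,k}-T_k(\tilde\rho))$ is nonpositive, which requires combining the convexity bound $\rho^*\geq\tilde\rho^\gamma$ with the concavity bound $\widetilde{T}_{1,k}\leq T_k(\tilde\rho)$. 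With your stated direction the discrepancy would be nonnegative, the comparison would flip, and the effective-viscous-flux identity no longer dominates $\mathcal{O}_{\gamma+1}$.

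Two smaller points. The remainder $\mathcal{R}_k$ you introduce is spurious: the effective-viscous-flux identity (5.23) is an exact equality, the $Q$-tensor and concentration contributions appear identically in the two It\^o-formula expansions and cancel upon subtraction, so there is nothing left to collect. And the expression $\|\widetilde{T}_{1,k}-\overline{T_k(\tilde\rho)}\|_{L^2}$ is identically zero because $\widetilde{T}_{1,k}$ \emph{is} the weak limit $\overline{T_k(\tilde\rho)}$; the quantity that actually enters the absorption step is $\|T_k(\tilde\rho)-\widetilde{T}_{1,k}\|_{L^{\gamma+1}}$, which by weak lower semicontinuity of the norm under $T_k(\tilde\rho_\delta)-T_k(\tilde\rho)\rightharpoonup\widetilde{T}_{1,k}-T_k(\tilde\rho)$ is bounded by $\liminf_\delta\|T_k(\tilde\rho_\delta)-T_k(\tilde\rho)\|_{L^{\gamma+1}}$ and can then be absorbed into the left-hand side because the exponent $\gamma+1$ exceeds one.
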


With the help of the Lemma \ref{lem5.3}, we may show that the limit $(\tilde \rho, \tilde u)$ satisfies the renormalized continuity equation using the same argument as Lemma 5.4 in \cite{DWang}
\begin{eqnarray}\label{5.25}
\partial_t b(\tilde\rho)+{\rm div }(b(\tilde \rho) \tilde u)+(b'(\tilde \rho)\tilde\rho-b(\tilde \rho)){\rm div} \tilde u=0,
\end{eqnarray}
$\widetilde{\mathbb{P}}$ a.s. in the weak sense.

\noindent{\bf Step 3}. The strong convergence of density.

The proof is also standard, we refer the reader to \cite{Feireisl,DWang,16,Hofmanova} for the deterministic and stochastic case. The proof of Theorem \ref{thm2.1} is completed.

\section{Appendix}

In the appendix, we present some lemmas that will be used frequently in this paper.
\begin{lemma}\!\!\! {{\rm \cite[Theorem 2.1]{Flan2}}}\label{lem6.1} Suppose that $X_{1}\subset X_{0}\subset X_{2}$ are Banach spaces, $X_{1}$ and $X_{2}$ are reflexive,  and the embedding of $X_{1}$ into $X_{0}$ is compact.
Then for any $1<p<\infty,~ 0<\alpha<1$, the embedding
\begin{equation*}
L^{p}(0,T;X_{1})\cap W^{\alpha,p}(0,T;X_{2})\hookrightarrow L^{p}(0,T;X_{0})
\end{equation*}
is compact.
\end{lemma}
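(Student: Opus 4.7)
The plan is to prove this fractional Aubin--Lions--Simon type compactness by adapting the classical strategy: reduction to weak-to-strong convergence, an Ehrling-type interpolation, and time-translation control coming from the fractional Sobolev regularity.

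First, I would take an arbitrary bounded sequence $\{u_n\} \subset L^p(0,T;X_1)\cap W^{\alpha,p}(0,T;X_2)$ and aim to extract a subsequence that converges strongly in $L^p(0,T;X_0)$. Since $X_1$ is reflexive and $1<p<\infty$, the space $L^p(0,T;X_1)$ is reflexive, so I can extract a subsequence with $u_n \rightharpoonup u$ weakly in $L^p(0,T;X_1)$. Replacing $u_n$ by $u_n-u$, I reduce to the case $u_n\rightharpoonup 0$ weakly, and the goal becomes showing $u_n\to 0$ strongly in $L^p(0,T;X_0)$.

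The next step is an Ehrling--Nirenberg inequality, which is available precisely because $X_1\hookrightarrow X_0$ is compact and $X_0\hookrightarrow X_2$ is continuous: for every $\varepsilon>0$ there is $C_\varepsilon$ with $\|v\|_{X_0}\le \varepsilon\|v\|_{X_1}+C_\varepsilon\|v\|_{X_2}$ for all $v\in X_1$. Integrating in $t$ gives
\begin{equation*}
\|u_n\|_{L^p(0,T;X_0)}\le \varepsilon\,\|u_n\|_{L^p(0,T;X_1)}+C_\varepsilon\,\|u_n\|_{L^p(0,T;X_2)}.
\end{equation*}
The first term is bounded uniformly by the assumed boundedness in $L^p(0,T;X_1)$, so by first fixing $\varepsilon$ small and then passing to the limit in $n$, it suffices to show the strong convergence $u_n\to 0$ in $L^p(0,T;X_2)$.

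To obtain this $L^p(0,T;X_2)$ convergence I would invoke a Riesz--Fréchet--Kolmogorov criterion in the Banach-valued setting. Equi-integrability/sectional relative compactness in $X_2$ comes from the bound $\|u_n(t)\|_{X_1}\le M$ (after a further diagonal argument, or by using $X_1\hookrightarrow\hookrightarrow X_0\hookrightarrow X_2$ on integrals), while the equi-continuity of translates in $t$ is the crucial input supplied by the fractional time regularity: the Sobolev--Slobodeckij seminorm bound
\begin{equation*}
\int_0^T\!\int_0^T \frac{\|u_n(t)-u_n(s)\|_{X_2}^p}{|t-s|^{1+\alpha p}}\,ds\,dt \le C
\end{equation*}
yields, via Fubini and a change of variables in $(t,h)=(t,t-s)$, translation estimates of the form $\|u_n(\cdot+h)-u_n(\cdot)\|_{L^p(0,T-h;X_2)}\to 0$ uniformly in $n$ as $h\to 0$. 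Combining uniform boundedness in $L^p(0,T;X_2)$, uniform translation continuity, and the fact that the family $\{u_n(t)\}$ lives in a relatively compact set of $X_2$ for a.e.\ $t$ (compactness $X_1\hookrightarrow X_0\hookrightarrow X_2$), the Kolmogorov--Riesz--Weil criterion delivers the required strong convergence along a subsequence.

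I expect the main technical obstacle to be handling the fractional regularity in the range $\alpha p\le 1$, where $W^{\alpha,p}(0,T;X_2)$ does not embed into continuous functions and pointwise-in-$t$ statements must be replaced by integrated translation estimates. Getting the correct uniform bound on $\|u_n(\cdot+h)-u_n(\cdot)\|_{L^p(0,T-h;X_2)}$ with explicit rate $h^\alpha$ (or any rate going to $0$ as $h\to 0$) from the Gagliardo seminorm, uniformly in $n$, is the delicate step; everything else is soft functional analysis built on the compact embedding hypothesis.
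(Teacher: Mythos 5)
The paper itself does not give a proof of this lemma; it is cited verbatim from Flandoli--Gatarek, Theorem~2.1, so the comparison is with the argument in that reference (and its close relative, Simon's compactness paper). Your overall architecture is the right one and matches the source: reduce by Ehrling's inequality to strong convergence in $L^{p}(0,T;X_{2})$, and use the fractional time regularity to supply the missing equicontinuity. One small slip: $\|u_{n}(t)\|_{X_{1}}\le M$ pointwise in $t$ is not available --- only the $L^{p}$-in-time bound --- so the Kolmogorov-type criterion must be phrased through the time-averaged sets $\bigl\{\int_{t_{1}}^{t_{2}} u_{n}\,dt\bigr\}$, as you note parenthetically.

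The genuine gap is in the passage from the Gagliardo seminorm to \emph{uniform-in-$n$} translation estimates at fixed $h$. The bound
$\int_{0}^{T}\int_{0}^{T}\|u_{n}(t)-u_{n}(s)\|_{X_{2}}^{p}\,|t-s|^{-1-\alpha p}\,ds\,dt\le C$
controls $\int_{0}^{T}\omega_{n}(h)h^{-1-\alpha p}\,dh$ where $\omega_{n}(h)=\int\|u_{n}(\cdot+h)-u_{n}\|_{X_{2}}^{p}$; this is an integral-in-$h$ statement and does not, by itself, force $\sup_{n}\omega_{n}(h)\to 0$ for each fixed $h\to 0$. The standard way to close this gap --- and what Flandoli--Gatarek effectively do --- is to replace pointwise translations by mollification: with $u_{n}^{\varepsilon}=u_{n}*\rho_{\varepsilon}$, Jensen's inequality gives
$\|u_{n}-u_{n}^{\varepsilon}\|_{L^{p}(X_{2})}^{p}\le \int \rho_{\varepsilon}(s)\,\omega_{n}(s)\,ds
\le \tfrac{C}{\varepsilon}\int_{0}^{\varepsilon}\omega_{n}(s)\,ds
\le C\,\varepsilon^{\alpha p}\int_{0}^{\varepsilon}\tfrac{\omega_{n}(s)}{s^{1+\alpha p}}\,ds
\le C'\varepsilon^{\alpha p}$,
\emph{uniformly in $n$}, while for each fixed $\varepsilon$ the family $\{u_{n}^{\varepsilon}\}_{n}$ is equicontinuous with values in a ball of $X_{1}$, hence precompact in $C([0,T];X_{0})$ by Arzel\`a--Ascoli. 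Combining this with the Ehrling step and a diagonal extraction finishes the proof. So your reduction is sound and your instinct that "the delicate step" lies exactly here is correct; replacing the translation-based Kolmogorov criterion with the mollification-based one removes the obstruction cleanly.
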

\begin{lemma}\!\!\!{\rm \cite[Theorem 1.1.1]{bergh}}\label{lem6.2} Let $\mathcal{L}: L^{p_1}(0,T)\rightarrow L^{p_2}(\mathcal{D})$ and $L^{q_1}(0,T)\rightarrow L^{q_2}(\mathcal{D})$ be a linear operator with $q_1>p_1$ and $q_2<p_2$. Then, for any $s\in (0,1)$, the operator $\mathcal{L}: L^{r_1}(0,T)\rightarrow L^{r_2}(\mathcal{D})$, where $r_1=\frac{1}{s/p_1+(1-s)/q_1}$, $r_2=\frac{1}{s/p_2+(1-s)/q_2}$.
\end{lemma}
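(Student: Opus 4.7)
This statement is the classical Riesz--Thorin interpolation theorem for $L^p$ spaces, and my plan is to prove it via the complex interpolation method using Hadamard's three-lines theorem. By linearity and the density of simple functions, it suffices to bound
\[
\left|\int_{\mathcal{D}} \mathcal{L}(f)(x)\, g(x)\, dx\right| \le \|\mathcal{L}\|_{p_1\to p_2}^{1-s}\,\|\mathcal{L}\|_{q_1\to q_2}^{s}
\]
uniformly over simple $f\in L^{r_1}(0,T)$ with $\|f\|_{L^{r_1}}=1$ and simple $g\in L^{r_2'}(\mathcal{D})$ of unit norm, where $r_2'$ is the H\"older conjugate of $r_2$; taking the supremum in $g$ and extending by density will then yield the desired operator bound.

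To realize that inequality I would introduce analytic families
\[
f_z(t) = |f(t)|^{\alpha(z)}\,\operatorname{sgn}(f(t)), \qquad g_z(x) = |g(x)|^{\beta(z)}\,\operatorname{sgn}(g(x)), \qquad 0\le \Re z\le 1,
\]
with affine exponents chosen so that $\alpha(s)=\beta(s)=1$ (so $f_s=f$, $g_s=g$), while on the line $\Re z=0$ one has $\|f_z\|_{L^{p_1}} = \|g_z\|_{L^{p_2'}} = 1$ and on $\Re z=1$ one has $\|f_z\|_{L^{q_1}} = \|g_z\|_{L^{q_2'}} = 1$. Since $f,g$ are simple, the scalar function $F(z) = \int_{\mathcal{D}} \mathcal{L}(f_z)(x)\,g_z(x)\,dx$ is entire, bounded in the closed strip and continuous there, with $F(s)$ equal to the pairing to be estimated.

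H\"older's inequality combined with the two endpoint bounds on $\mathcal{L}$ immediately yields $|F(z)|\le \|\mathcal{L}\|_{p_1\to p_2}$ on $\Re z=0$ and $|F(z)|\le \|\mathcal{L}\|_{q_1\to q_2}$ on $\Re z=1$. Hadamard's three-lines theorem then interpolates these to give the stated bound at $z=s$. The only real subtlety is the bookkeeping: choosing the affine exponents $\alpha,\beta$ so that the parameter $s$ produces precisely the target exponents $r_1,r_2$, and checking that $|f_z|,|g_z|$ on the two boundary lines have $L^p$ norm exactly one (the imaginary part of the exponent contributes only a unit-modulus phase factor and so does not spoil the bound). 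There is no deeper obstruction; the argument is classical and is the proof given in Bergh--L\"ofstr\"om.
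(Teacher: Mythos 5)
Your proof is the classical Riesz--Thorin argument via Hadamard's three--lines theorem, and that is exactly the proof given in the cited reference, Bergh and L\"ofstr\"om; the paper itself provides no proof for this appendix lemma, only the citation. The argument is correct in substance. One small bookkeeping remark: with your assignment of endpoints (the $(p_1,p_2)$ estimate on $\Re z=0$, the $(q_1,q_2)$ estimate on $\Re z=1$) the interpolated exponent at $\Re z=s$ satisfies $1/r_1=(1-s)/p_1+s/q_1$, whereas the lemma as stated has $1/r_1=s/p_1+(1-s)/q_1$; this is merely the relabeling $s\leftrightarrow 1-s$ and is resolved by assigning the $(q_1,q_2)$ estimate to $\Re z=0$ instead, as you yourself flag in the final paragraph. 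Also note that the hypotheses $q_1>p_1$, $q_2<p_2$ play no role in Riesz--Thorin (it holds for any pair of endpoint exponents); they are stated in the lemma only to orient the interpolation toward the particular application in Section~3, and your proof correctly does not rely on them.
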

\begin{theorem}\!\!\!{\rm \cite[Chapter 3]{kall}}\label{thm6.1} Let $p\geq 1$, $\{X_n\}_{n\geq 1}\in L^p$ and $X_n\rightarrow X$ in probability. Then, the following are equivalent\\
{\rm (1)}. $X_n\rightarrow X$ in $L^p$;\\
{\rm (2)}. the sequence $|X_n|^p$ is uniformly integrable;\\
{\rm (3)}. $\mathbb{E}|X_n|^p\rightarrow \mathbb{E}|X|^p$.
\end{theorem}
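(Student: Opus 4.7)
The plan is to establish equivalence by closing a cycle of implications among the three conditions, relying on the standing hypothesis that $X_n \to X$ in probability. The easy direction is $(1) \Rightarrow (3)$, which follows immediately from the reverse triangle inequality $\bigl|\|X_n\|_{L^p}-\|X\|_{L^p}\bigr|\leq \|X_n-X\|_{L^p}$, so the $L^p$ norms converge along with the $L^p$ distance.

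For $(1) \Rightarrow (2)$, I would exploit the pointwise bound $|X_n|^p\leq 2^{p-1}(|X_n-X|^p+|X|^p)$. The single random variable $|X|^p$ is uniformly integrable, and any finite collection of $L^p$ random variables is uniformly integrable; combining this with the fact that $\|X_n-X\|_{L^p}^p<\varepsilon$ for $n$ beyond some $N=N(\varepsilon)$ yields uniform absolute continuity of the integrals $\int_A |X_n|^p\,d\mathbb{P}$ as $\mathbb{P}(A)\to 0$, uniformly in $n$. Conversely, for $(2) \Rightarrow (1)$, extract an a.s.\ convergent subsequence from convergence in probability, apply Fatou's lemma to place $X$ in $L^p$, and then split $\int |X_n-X|^p\,d\mathbb{P}$ over $\{|X_n-X|\leq\eta\}$ and its complement. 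Uniform integrability of $\{|X_n|^p\}$ together with $|X|^p\in L^1$ controls the complement (whose probability vanishes by convergence in probability), while the bounded part is controlled by $\eta$.

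The remaining step is the non-trivial implication $(3) \Rightarrow (1)$ (or equivalently $(3) \Rightarrow (2)$), which I expect to be the main obstacle. My approach is a truncation-plus-Fatou argument in the spirit of the Brezis--Lieb lemma: pick the pointwise inequality $\bigl||a|^p-|b|^p-|a-b|^p\bigr|\leq \varepsilon|a-b|^p+C_\varepsilon|b|^p$, substitute $a=X_n$, $b=X$, and integrate. Using convergence in probability one obtains $|X_n|^p-|X_n-X|^p\to |X|^p$ in probability, and combined with the assumption $\mathbb{E}|X_n|^p\to\mathbb{E}|X|^p$, the dominated/Fatou-type estimate forces $\mathbb{E}|X_n-X|^p\to 0$. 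Alternatively, one can deduce $(3)\Rightarrow(2)$ directly by a contrapositive/truncation argument: if $\{|X_n|^p\}$ were not uniformly integrable, then for some $\delta>0$ and $R_n\to\infty$ one would have $\mathbb{E}[|X_n|^p\mathbf{1}_{|X_n|>R_n}]\geq\delta$, which combined with Fatou on the truncated pieces $|X_n|^p\wedge R$ contradicts $\mathbb{E}|X_n|^p\to\mathbb{E}|X|^p$. The delicate point throughout is that convergence in probability alone does not preserve $L^p$ norms; the role of hypothesis (3) is precisely to rule out the escape of $L^p$ mass, and this must be extracted by a careful interplay of truncation, Fatou's lemma, and the uniform tail control supplied by the norm convergence.
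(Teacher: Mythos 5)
The paper does not prove Theorem~\ref{thm6.1}: it quotes it from Kallenberg \cite[Chapter~3]{kall} and uses it as a black box, so there is no in-paper proof against which to compare your argument. Taken on its own, your sketch is correct. The implications $(1)\Rightarrow(3)$, $(1)\Rightarrow(2)$, and $(2)\Rightarrow(1)$ are standard; the only point worth spelling out in $(2)\Rightarrow(1)$ is that on $\{|X_n-X|>\eta\}$ you dominate $|X_n-X|^p$ by $2^{p-1}(|X_n|^p+|X|^p)$ before invoking uniform integrability of the family $\{|X_n|^p\}\cup\{|X|^p\}$ over a set of vanishing probability. The non-trivial implication $(3)\Rightarrow(1)$ via the Brezis--Lieb estimate $\bigl||a|^p-|b|^p-|a-b|^p\bigr|\leq\varepsilon|a-b|^p+C_\varepsilon|b|^p$ is a valid route: the positive part of the left-hand side minus $\varepsilon|X_n-X|^p$ is dominated by $C_\varepsilon|X|^p$ and tends to $0$ in probability, hence in $L^1$; since $(3)$ gives $\sup_n\mathbb{E}|X_n-X|^p<\infty$, one obtains $\mathbb{E}\bigl||X_n|^p-|X|^p-|X_n-X|^p\bigr|\to 0$, and $(3)$ then forces $\mathbb{E}|X_n-X|^p\to 0$. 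This differs from Kallenberg's own proof, which establishes $(3)\Rightarrow(2)$ directly from the identity $|X_n|^p\vee|X|^p=|X_n|^p+|X|^p-|X_n|^p\wedge|X|^p$ together with bounded convergence applied to the minima; the textbook route is slightly shorter, while your Brezis--Lieb route is more structural and transfers unchanged to other $L^p$-type settings. One small caution on your alternative contrapositive for $(3)\Rightarrow(2)$: failure of uniform integrability only produces a subsequence $n_k$ with levels $R_k\to\infty$, not a level $R_n$ for every $n$; once stated along that subsequence, the Fatou-on-truncations contradiction runs as you describe.
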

\begin{theorem}\!\!\!{\rm \cite[Theorem 1]{jak}}\label{thm6.2}  {Let $X$ be a quasi-Polish space}. If the set of probability measures $\{\nu_n\}_{n\geq 1}$ on $\mathcal{B}(X)$ is tight, then there exists a probability space $(\Omega, \mathcal{F}, \mathbb{P})$ and a sequence of random variables $u_n, u$ such that theirs laws are $\nu_n$, $\nu$ and $u_n\rightarrow u$, $\mathbb{P}$ a.s. as $n\rightarrow \infty$.
\end{theorem}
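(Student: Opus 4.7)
The plan is to reduce the statement to the classical Skorokhod representation theorem on a Polish space by exploiting the embedding structure built into the definition of a quasi-Polish space. By Jakubowski's characterization, such an $X$ admits a countable family $\{f_m\}_{m\geq 1}$ of continuous functions $f_m:X\to[-1,1]$ that separates points; equivalently, the diagonal map $J=(f_m)_{m\geq 1}:X\to[-1,1]^{\mathbb{N}}$ is a continuous injection into a compact Polish space (under the product topology). First I would push the tight family $\{\nu_n\}$ forward through $J$ to obtain measures $\mu_n:=\nu_n\circ J^{-1}$ on $[-1,1]^{\mathbb{N}}$, which are automatically tight there by compactness.

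Next, by Prokhorov's theorem I would extract a subsequence (still labeled $\mu_n$) converging weakly to some Borel probability measure $\mu$ on $[-1,1]^{\mathbb{N}}$. The tightness of $\{\nu_n\}$ on $X$ ensures that $\mu$ concentrates on the image $J(X)$: on each compact $K_\varepsilon\subset X$ with $\nu_n(K_\varepsilon)\geq 1-\varepsilon$, the set $J(K_\varepsilon)$ is compact, hence closed, in $[-1,1]^{\mathbb{N}}$ and carries $\mu_n$-mass at least $1-\varepsilon$, so by the Portmanteau theorem the same lower bound passes to $\mu$. Letting $\varepsilon\to 0$ shows $\mu(J(X))=1$, and $\nu:=\mu\circ(J^{-1})^{-1}$ is a well-defined Radon probability measure on $X$.

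Then I would invoke the classical Skorokhod representation theorem on the Polish space $[-1,1]^{\mathbb{N}}$ to obtain random variables $Y_n,Y$ on a common probability space $(\Omega,\mathcal{F},\mathbb{P})$ with laws $\mu_n,\mu$ and such that $Y_n\to Y$ $\mathbb{P}$-a.s. in the product topology. Because $Y_n,Y\in J(X)$ almost surely, I can set $u_n:=J^{-1}(Y_n)$ and $u:=J^{-1}(Y)$; by construction their laws are $\nu_n,\nu$, as required.

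The hard part will be upgrading the a.s. coordinate-wise convergence $f_m(u_n)\to f_m(u)$ for every $m$ to genuine convergence $u_n\to u$ in the topology of $X$, which does not follow from point-separation alone. Here one exploits tightness a second time by choosing, for each $N$, a compact $K_N\subset X$ with $\nu_n(K_N)\geq 1-2^{-N}$ uniformly in $n$. On such $K_N$ the restriction of $J$ is a homeomorphism onto its image, since an injective continuous map from a compact space to a Hausdorff space is a topological embedding; consequently coordinate convergence inside $J(K_N)$ pulls back to convergence in $X$. A Borel–Cantelli argument applied to the events $\{u_n\notin K_N\}$ combined with the portmanteau-type bound on $u\in K_N$ gives $\mathbb{P}(u_n,u\in K_N\text{ for all large }n)\to 1$ as $N\to\infty$, and a diagonal extraction yields $u_n\to u$ $\mathbb{P}$-a.s. in $X$.
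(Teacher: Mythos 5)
The statement in question is the Skorokhod--Jakubowski theorem, which the paper only cites (as \cite[Theorem 1]{jak}) and does not prove; there is no proof in the paper to compare against. Your sketch correctly identifies the architecture of Jakubowski's argument: embed $X$ into $[-1,1]^{\mathbb{N}}$ via the countable separating family $J=(f_m)$, push the measures forward, extract a weakly convergent subsequence by Prokhorov, show the limit concentrates on $J(X)$ via Portmanteau applied to the closed sets $J(K_\varepsilon)$, and try to pull a Skorokhod representation back through $J^{-1}$. You also correctly flag the place where the real work sits. The Portmanteau step and the observation that $J$ restricted to each compact $K_N$ is a homeomorphism onto its (compact, hence Borel) image are both right, and justify that $J^{-1}$ restricted to the $\sigma$-compact set $J(\cup_N K_N)$ is Borel, so the pullbacks $u_n=J^{-1}(Y_n)$, $u=J^{-1}(Y)$ are legitimate random variables with the correct laws.

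The gap is in the last step. For each fixed $N$ one only has the uniform bound $\mathbb{P}(u_n\notin K_N)\le 2^{-N}$, so $\sum_n\mathbb{P}(u_n\notin K_N)=\infty$ and the first Borel--Cantelli lemma gives no information: you cannot conclude that a.s. $u_n\in K_N$ for all large $n$. Passing to a diagonal subsequence $n_k$ with $\mathbb{P}(u_{n_k}\notin K_k)\le 2^{-k}$ does make the series summable, but then the compact set $K_k$ grows with $k$, so you again lose the fixed compact set on which $J^{-1}$ is continuous; coordinate convergence inside a moving family $K_k$ does not pull back to convergence in $X$. This is precisely the failure mode for, say, $X=L^2_w$ with $f_m(u)=\langle u,e_m\rangle$: coordinatewise convergence of an unbounded sequence says nothing about weak convergence. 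The resolution in Jakubowski's actual proof is not to pull back an arbitrary classical Skorokhod representation; he constructs the representation on $([0,1],\lambda)$ directly, by a nested-partition (dyadic tree) scheme adapted to the exhaustion $K_1\subset K_2\subset\cdots$, so that for $\lambda$-a.e.\ $\omega$ there is a random index $j(\omega)$ with $u(\omega)$ and all $u_n(\omega)$ eventually trapped in the \emph{same} compact $K_{j(\omega)}$. That simultaneous confinement, built into the construction rather than extracted after the fact, is what makes the coordinatewise-to-topological upgrade legitimate, and it is the ingredient your proposal is missing.
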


\section*{Acknowledgments}
 {We are thankful to the referee} for his/her careful reading and many detailed comments and suggestions that help improved the paper.  Z. Qiu's research was supported by the CSC under grant No.201806160015.

\bigskip


\begin{thebibliography}{aa}
\footnotesize

\bibitem{Breit1}
 {D. Breit}, E. Feireisl. Stochastic Navier-Stokes-Fourier equations. \emph{Indiana Univ. Math. J.}, 69(2020), 911-975.

\bibitem{Breit2}
D. Breit, E. Feireisl, M. Hofmanov. Stochastically forced compressible fluid flows. volume 3 of De Gruyter Series in Applied and Numerical Mathematics. De Gruyter,
Berlin, 2018.

\bibitem{Breit} D. Breit, E. Feireisl, M. Hofmanov. Local strong solutions to the stochastic compressible Navier-Stokes system. \emph{Comm. Partial Differential Equations}, 43(2018), 313-345.

\bibitem{Hofmanova}
D. Breit, M. Hofmanova. Stochastic Navier-Stokes equations for compressible fluids. \emph{Idiana Univ. Math. J.}, 65(2014), 1183-1250.


\bibitem{18}
D. Breit, E. Feireisl, M. Hofmanov\'{a}, B. Maslowski. Stationary solutions to the compressible Navier-Stokes system driven by stochastic forces. \emph{Probab. Theory Related Fields}, 174(2019), 981-1032.

\bibitem{bergh}
J. Bergh, J. L\"{o}fstr\"{o}m. \emph{Interpolation Spaces}. An Introduction, Grundlehren der Mathematischen Wissenschaften, Springer-Verlag, Berlin–New York, 1976.

\bibitem{19}
M.E. Bogovskii. Solution of some vector analysis problems connected with operators div and grad (in Russian). \emph{In Trudy Seminar SL Sobolev}, 80(1980), 5-40.

\bibitem{20}
W. Borchers, H. Sohr. On the equations rot v= g and div u= f with zero boundary conditions. \emph{Hokkaido Math. J.}, 19(1990), 67-87.

\bibitem{ZM}
Z. Brzeźniak, M. Ondreját. Strong solutions to stochastic wave equations with values in Riemannian manifolds. \emph{J. Funct. Anal.}, 253(2007), 449-481.

\bibitem{14}
G.Q. Chen, A. Majumdar, D. Wang, R. Zhang. Global existence and regularity of solutions for active liquid crystals. \emph{J. Differential Equations}, 263(2017), 202-239.

\bibitem{11}
G.Q. Chen, A. Majumdar, D. Wang, R. Zhang. Global weak solutions for the compressible active liquid crystal system. \emph{SIAM J. Math. Anal.}, 50(2018), 3632-3675.

\bibitem{Prato}
G. Da Prato, J. Zabczyk. \emph{Stochastic Equations in Infinite Dimensions}. Cambridge University Press, 1992.

\bibitem{Darnton}
N.C Darnton, L. Turner, S. Rojevsky, H.C. Berg. Dynamics of bacterial swarming. \emph{Biophys. J.}, 98(2010), 2082-2090.

\bibitem{Gibbon}
R.C. Doering, D.J. Gibbon. \emph{Applied Analysis of the Navier-Stokes Equations}. Cambridge University Press, 1995.

\bibitem{ding}
S. Ding, C. Wang, H. Wen. Weak solution to compressible hydrodynamic flow of liquid crystals in 1-D. \emph{Discrete Contin. Dyn. Syst. Ser. B.}, 15(2011), 357-371.

\bibitem{ding2}
S.J. Ding, J.Y. Lin, C.Y. Wang, H.Y. Wen. Compressible hydrodynamic flow of liquid crystals in 1-D. \emph{Discrete Contin. Dyn. Syst.}, 32(2012), 539-563.

\bibitem{fei}
E. Feireisl. \emph{Dynamics of viscous compressible fluids}. Oxford University Press, 2004.

\bibitem{Maslow}
E. Feireisl, B. Maslowski, A. Novotn\'{y}. Compressible fluid flows driven by stochastic forcing. \emph{J. Differential Equations}, 254(2013), 1342-1358.

\bibitem{Feireisl}
E. Feireisl, A. Novotn\'{y}, H. Petzeltov\'{a}. On the existence of globally defined weak solutions to the Navier-Stokes equations. \emph{J. Math. Fluid. Mech.}, 3(2001), 358-392.

\bibitem{Flan}
F. Flandoli.\emph{ An introduction to 3D Stochastic Fluid Dynamics. SPDE in Hydrodynamic: Recent Progress and Prospects}. Springer, Berlin, Heidelberg, 51-150, 2008.

\bibitem{Flan2}
F. Flandoli, D. Gatarek. Martingale and stationary solutions for stochastic Navier-Stokes equations.  \emph{Probab. Theory Related Fields}, 102(1995), 367-391.

\bibitem{21}
G. Galdi. \emph{An introduction to the mathematical theory of the Navier-Stokes equations}. I, Springer-Verlag, New York, 1994.

\bibitem{Krylov}
I. Gy\"{o}ngy, N. Krylov. Existence of strong solutions for It\^{o}s stochastic equations via approximations. \emph{Probab. Theory Related Fields}, 105(1996), 143-158.

\bibitem{hoff}
D. Hoff. Strong convergence to global solutions for multidimensional flows of compressible, viscous fluids with polytropic equations of state and discontinuous initial data. \emph{Arch. Ration. Mech. Anal.}, 132(1995), 1-14.

\bibitem{hu}
X. Hu, D. Wang. Global solution to the three-dimensional incompressible flow of liquid crystals. \emph{Comm. Math. Phys.}, 296(2010), 861-880.

\bibitem{jak}
A. Jakubowski. The almost sure Skorokhod representation for subsequences in nonmetric spaces. \emph{Theory Probab. Appl.}, 42(1998), 167-174.

\bibitem{kall}
O. Kallenberg. \emph{Foundations of modern probability in Probabolity and Its Application}. Springer-Verlag, New York, 1997.

\bibitem{Lunardi}
A. Lunardi.  \emph{Analytic semigroups and optimal regularity in parabolic problems}. Springer Science $\&$ Business Media, 2012.

\bibitem{lin}
F. Lin, J. Lin, C. Wang. Liquid crystal flows in two dimensions. \emph{Arch. Ration. Mech. Anal.}, 197(2010), 297-336.

\bibitem{Lions1}
P.L. Lions. \emph{Mathematical Topics in Fluid Mechanics: Volume 1: Incompressible Models}. Oxford University Press, New York, 1996.

\bibitem{Lions}
P.L. Lions. \emph{Mathematical Topics in Fluid Mechanics: Volume 2: Compressible Models}. Oxford University Press, New York, 1998.

\bibitem{lian}
W. Lian, R. Zhang. Global weak solutions to the active hydrodynamics of liquid crystals. \emph{J. Differential Equations}, 268(2019), 4194-4221.

\bibitem{nish1}
A. Matsumura, T. Nishida. The Initial Value Problem for the Equations of Motion of compressible Viscous and Heat-conductive Fluids. \emph{Proc. Japan. Acad. Ser. A Math. Sci.}, 55(1979), 337-342.

\bibitem{nish2}
A. Matsumura, T. Nishida. The initial value problem for the equations of motion of viscous and heat-conductive gases. \emph{J. Math. Kyoto Univ.}, 20(1980), 67-104.

\bibitem{nish3}
A. Matsumura, T. Nishida. Initial boundary value problems for the equations of motion of compressible viscous and heat-conductive fluids. \emph{Comm. Math. Phys.}, 89(1983), 445-464.

\bibitem{13}
M. Paicu, A. Zarnescu. Global existence and regularity for the full coupled Navier-Stokes and Q-tensor system. \emph{SIAM J. Math. Anal.}, 43(2011), 2009-2049.

\bibitem{15}
M. Paicu, A. Zarnescu. Energy dissipation and regularity for a coupled Navier-Stokes and Q-tensor system.  \emph{Arch. Ration. Mech. Anal.}, 203(2012),  45-67.

\bibitem{rama}
S. Ramaswamy. The mechanics and statistics of active matter. \emph{Annu. Rev. Condens. Matter Phys.}, 1(2010), 323-345.

\bibitem{16}
S.A. Smith. Random perturbations of viscous, compressible fluids: global existence of weak solutions. \emph{SIAM J. Math. Anal.}, 49(2017), 4521-4578.

\bibitem{17}
S.A. Smith, K. Trivisa. The stochastic Navier-Stokes equations for heat-conducting, compressible fluids: global existence of weak solutions. \emph{J. Evolution Equations}, 18(2018), 411-465.

\bibitem{Sanchez}
T. Sanchez, D.T. N. Chen, S.J. Decamp, M. Heymann, Z. Dogic. Spontaneous motion in hierarchically assembled active matter.\emph{ Nature}, 491(2012), 431-434.


\bibitem{DWang}
D. Wang, H. Wang. Global existence of martingale solutions to the three-dimensional stochastic compressible Navier-Stokes equations. \emph{Differential Integral Equations}, 28(2015), 1105-1154.

\bibitem{12}
D. Wang, X. Xu, C. Yu. Global weak solution for a coupled compressible Navier-Stokes and Q-tensor system. \emph{Commun. Math. Sci.}, 13(2015), 49-82.

\bibitem{yu}
D. Wang, C. Yu. Global weak solution and large-time behavior for the compressible flow of liquid crystals. \emph{Arch. Ration. Mech. Anal.}, 204(2012), 881-915.


\end{thebibliography}
\end{document}